\def\mc#1{\mathcal {#1}}
\def\A{\mc A}
\def\B{\mc B}
\def\C{\mc C}
\def\D{\mc D}
\def\E{\mc E}
\def\F{\mc F}
\def\M{\mc M}
\def\N{\mc N}
\def\R{\mc R}
\def\S{\mc S}
 \author{S.N. Hosseini, A.R. Shir Ali Nasab, W. Tholen, L. Yeganeh}
\address{Mathematics Department\\ Shahid Bahonar University of Kerman\\ Kerman, Iran
\\ 
\\Department of Mathematics and Statistics\\ York University, Toronto, Canada}
 \title{Quotients of Span Categories That Are Allegories\\and the representation of regular categories}
 \keywords{span category, relation, stable system, regular category, (unitary, tabular) allegory, representation of allegories.\\
 \\
 { Acknowledgements:} The second author's research was in part supported by a grant from IPM. The third author acknowledges support under the Discovery Grants Program by the National Sciences and Engineering Council of Canada}
\begin{document}

 \maketitle

 \begin{abstract}
%TO BE EDITED
%In this article we consider the quotients of the category of spans. We first introduce the smallest equivalence relation on spans, so that the corresponding quotient category is an allegory. We next investigate the morphism classes whose corresponding quotient span category is an allegory. Then we prove if the defining collection of morphisms in the category of relations contains the retractions, then the category of relations is an allegory; and that the converse is also true. We also introduce a collection of morphisms that yields a unitary tabular allegory. Finally we give an adjunction between the category of categories equipped with a factorization structure and the category of unitary tabular allegories.
We consider the ordinary category $\mathsf{Span}(\mathcal C)$ of (isomorphism classes of) spans of morphisms in a category $\mathcal C$ with finite limits as needed, composed horizontally via pullback, and give a general criterion for a quotient of $\mathsf{Span}(\mathcal C)$ to be an allegory. In particular, when $\mathcal C$ carries a pullback-stable, but not necessarily proper,  $(\mathcal E,\mathcal M)$-factorization system, we establish a quotient category $\mathsf{Span}_{\mathcal E}(\mathcal C)$ that is isomorphic to the category $\mathsf{Rel}_{\mathcal M}(\mathcal C)$ of $\mathcal M$-relations in $\mathcal C$, and show that it is a (unitary and tabular) allegory precisely when $\mathcal M$ is a class of monomorphisms in $\mathcal C$. Without this restriction, one can still find a least pullback-stable and composition-closed class $\mathcal E_{\bullet}$ containing $\mathcal E$
such that $\mathsf{Span}_{\mathcal E_{\bullet}}(\mathcal C)$ is a unitary and tabular allegory. In this way one obtains a left adjoint to the 2-functor that assigns to every unitary and tabular allegory the regular category of its Lawverian maps. With the Freyd-Scedrov Representation Theorem for regular categories, we conclude that every finitely complete category with a stable factorization system has a reflection into the huge 2-category of all regular categories.
 \end{abstract}

\section{Introduction} %TO BE EDITED

%It is known that for a regular category, the category of relations is a unitary tabular allegory, see \cite{fs} and \cite{js}. In \cite{Milius}, it is shown that a finitely complete category equipped with a stable factorization structure yields a category of relations; however this category of relations is no longer an allegory.

 %In this paper we prove, Theorem \ref{RelasquotientofSpan}, the category of relations, obtained by a finitely complete category with a stable factorization structure, is isomorphic to a quotient of the category of spans. This motivated us to find quotients of the span category that are allegory or unitary tabular allegory. To this end, we recall the definition of a unitary tabular allegory; for more details see \cite{fs} and \cite{js}.
$$ $$ 
 By identifying vertically isomorphic morphisms (= 1-cells) in Benabou's bicategory $\mathcal{S}\emph{pan}(\mathcal C)$ of spans of morphisms in a category $\mathcal C$ with pullbacks (see \cite{Benabou}, \cite{Borceux1}) one obtains the (ordinary) category
 $$\mathsf{Span}(\mathcal C),$$
 in which spans get composed horizontally via pullback in $\mathcal C$.
 If $\mathcal C$ is regular, so that $\mathcal C$ has also binary products and a stable (regular epi, mono)-factorization system, one may similarly form 
 the category $$\mathsf{Rel}(\mathcal C)$$ of sets and relations (= isomorphism classes of monic spans) in $\mathcal C$, with the horizontal composite of a composable pair of relations obtained as a regular image of their span composite. This category inherits from the bicategory $\mathcal{S}\emph{pan}(\mathcal C)$ the structure of a (strict) 2-category, with 2-cells given by order; actually, $\mathsf{Rel}(\mathcal C)$ is the prototypical example of a {\em (unitary and tabular) allegory}: see \cite{fs}, \cite{js}.
 
 More generally, as done in \cite{Pav 1} (with predecessors of this work presented under more restrictive conditions in \cite{Klein} and \cite{Meisen}, and with a weakening of the pullback-stability constraint given in \cite{JW 1996}),  without any epi- or mono restrictions one may consider an arbitrary stable factorization system $(\mathcal E,\mathcal M)$ of a category $\mathcal C$ with binary products and pullbacks and form the category $${\mathsf{Rel}}_{\mathcal M}(\mathcal C).$$ Its morphisms are represented by those spans $(A\longleftarrow R\longrightarrow B)$ whose induced morphism $R\longrightarrow A\times B$ lies in $\mathcal M$. In this paper we take a fresh look at this category, by treating it as a quotient category of 
  $\mathsf{Span}(\mathcal C)$. In fact, for any pullback-stable class $\mathcal E$ of morphisms in $\mathcal C$ that contains all isomorphisms and is closed under composition, we describe a compatible equivalence relation $\sim_{\mathcal E}$ on $\mathsf{Span}(\mathcal C)$ which renders its quotient category $$\mathsf{Span}_{\mathcal E}(\mathcal C)$$ isomorphic to $\mathsf{Rel}_{\mathcal M}(\mathcal C)$ when $\mathcal M$ is a factorization partner of $\mathcal E$; see Theorem \ref{RelasquotientofSpan}.
  
 Folklore knowledge says that ${\mathsf{Rel}}_{\mathcal M}(\mathcal C)$ is still an allegory, provided that $\M$ is a class of monomorphisms in 
$\C$; indeed, in Section 4 of this paper we give a novel proof of this fact. In \cite{Milius} it was shown that the provision $\mathcal M\subseteq \ 
\mathrm{Mono}(\mathcal C)$ is essential for obtaining an allegory; for example; the stable factorization system (bijective on objects, fully faithful) of the ordinary category 
${\sf Cat}$ of small categories, ${\sf Rel}_{\{\mathrm{fully\, faithful}\}}({\sf Cat})$ fails to be an allegory. 
In Theorem \ref{nec suf con rel be all} we will actually show that the provision $\M\subseteq {\mathrm{Mono}}(\C)$ is also necessary for ${\sf Rel}_{\M}(\C)$ to form an allegory.

This last result is a consequence of our answer to the more general question suggested by the title of the paper: When does a compatible  
equivalence relation $\sim$ on $\mathsf{Span}(\mathcal C)$ make its quotient category an allegory? The necessary and sufficient condition given in Theorem \ref{Spanallegory} may in particular be exploited in the case that $\sim$ is the relation $\sim_{\mathcal E}$ induced by a stable morphism class $\mathcal E$ as above (Theorem \ref{sim_F-allegory}), which then leads to the aforementioned Theorem \ref{nec suf con rel be all} whenever $\mathcal E$ belongs to a stable factorization system $(\mathcal E,\mathcal M)$ of $\mathcal C$.

Somewhat surprisingly, in the presence of a stable factorization system $(\mathcal E,\mathcal M)$ with $\M\subseteq {\mathrm{Mono}}(\C)$, the allegory
$\mathsf{Span}_{\mathcal E}(\mathcal C)$ is (in the terminology of \cite{fs}) already {\em unitary} and {\em tabular} (Corollary \ref{nec suf con rel be tball}).
This important addendum follows with the help of the principal result of the paper:  Given any stable factorization system $(\mathcal E,\mathcal M)$ in a finitely complete category $\mathcal C$, there is a least stable and composition-closed class $\mathcal E_{\bullet}$ containing
 $\mathcal E$ that makes $\mathsf{Span}_{{\mathcal E}_{\bullet}}(\mathcal C)$ a unitary tabular allegory (Theorem \ref{tabulation}). 
 
 The global significance of this result is outlined in Section 6, where we set up the huge 2-categories of unitary tabular allegories on one hand, and of finitely complete categories equipped with a stable factorization system on the other hand. We then show that the construction of Theorem \ref{tabulation} gives us a left adjoint to the 2-functor
 $$\mathsf{Map}:\mathfrak{UTabAll}\longrightarrow \mathfrak{StabFact},$$ 
 which assigns to a unitary tabular allegory its category of (Lawverian) maps, equipped with its stable factorization system that makes it a regular category. With the {\em Freyd-Scedrov Representation Theorem} \cite[2.154]{fs}, which lets us present $\mathfrak{UTabAll}$ as 2-equivalent to the full subcategory $\mathfrak{RegCat}$ of $\mathfrak{StabFact}$, given by all regular categories, we see that every finitely complete category with a stable factorization system allows for a reflection into $\mathfrak{RegCat}$.

\section{Categories of relations as quotients of the span category}
{\em Throughout this paper, we let $\C$ be a category with binary products and pullbacks}.   

For objects $A,B$ in $\C$, a {\em{span}} $(f,g)$ in $\C$ with domain $A$ and codomain $B$ is given by a pair of morphisms
%$$A\longleftarrow^s D\longrightarrow^f B \quad(fix\; position\; of\; arrow\; names)$$
\begin{center}
	$\xymatrix{A & D\ar[l]_{f}\ar[r]^{g} & B}\;.$
\end{center}
These are the objects of the category $\mathcal{S}\emph{pan}(\mathcal C)(A,B)$
whose morphisms $u:(f, g)\longrightarrow(\tilde{f},\tilde{g})$ are given by $\C$-morphisms $u$ with $\tilde{f} u=f$ and 
$\tilde{g} u=g$, to be composed vertically as in $\C$.
\begin{center}
	$\xymatrix{& & D\ar[lld]_{f}\ar[dd]_{u}\ar[rrd]^{g} & & \\
		A & & & & B\\
		& & \tilde{D}\ar[llu]^{\tilde{f}}\ar[rru]_{\tilde{g}} & &}$
\end{center}
%Of course, isomorphisms in this category are given by isomorphisms in $\C$. Notationally we will not distinguish between $(f, g)$ and its isomorphism class in
%${\sf{Span}}(\C)(A,B)$. Consequently, having isomorphisms $i:(f,g)\to(\tilde{f},\tilde{g})$ and $j:(h,k)\to(\tilde{h},\tilde{k})$, we will notationally not distinguish between vertical morphisms $x:(f,g)\to(h,k)$ and $\tilde{x}:(\tilde{f},\tilde{g})\to(\tilde{h},\tilde{k})$ if they are isomorphic, that is, if $jx=\tilde{x}i$.
$\mathcal{S}\emph{pan}(\mathcal C)$ becomes a bicategory when one composes the spans $(f,g):A\longrightarrow B$ and  $(h,k):B\longrightarrow C$ horizontally via (a tacitly chosen) pullback in the usual fashion, as shown by
\begin{center}
	$\xymatrix{&& P\ar[ld]_{h'}\ar[rd]^{g'} && \\
		& D\ar[ld]_{f}\ar[rd]^{g}\ar@{}[rr]|{\rm{pb}} & & E\ar[ld]_{h}\ar[rd]^{k} & \\
		A && B && C}$
\end{center}
that is: $$(h,k)\circ(f,g) := (f h',k g').$$ 

In this paper we are mostly interested in the quotient categories of the ordinary category
%With the same objects as those of $\C$, the resulting category of isomorphisms classes of spans is denoted by  
 $$\sf{Span}(\C)$$
 whose objects are those of $\C$, and whose morphisms are (vertical) isomorphism classes of spans in $\C$, with their horizontal composition. For simplicity, we denote the (vertical) isomorphism class of $(f,g)$ again by $(f,g)$.
 The category $\sf{Span}(\C)$  comes with the involution $$(f,g)^{\circ}=(g,f),$$ 
 making it
 a self-dual category.
 %and so it is isomorphic to 
 %${\sf Span}(\C)^{\rm op}$.

 %In order to form a quotient category of ${\sf Span}(\C)$, the relation $\sim $  must be compatible that is:
% \begin{itemize}
 %	\item only spans with the same domain and codomain may be related;
 	%\item vertically isomorphic spans are related;
 %	\item horizontal composition from either side preserves the relation.
 %\end{itemize}
 
 %It is a routine exercise, and a fact used frequently in this paper, that the least equivalence relation for spans generated by a given compatible relation is again compatible.
 
 %For a compatible equivalence relation $\sim $ we denote the $\sim $-equivalence class of $(f, g)$ by
% $[f, g]_{\sim}$, or by just $[f, g]$ when the context makes it clear which relation $\sim $ we are referring to, and we write
 %$${\sf Span}_{\sim}(\C)$$
 %for the resulting quotient category ${\sf Span}(\C)/\sim$. We recall that the horizontal composition of two classes $\xymatrix{A\ar[r]^{[f, g]_{\sim}}&B\ar[r]^{[h, k]_{\sim}}&C}$ is define to be 
% $$[h, k]_{\sim}[f, g]_{\sim}:=[(h, k)\circ (f, g)]_{\sim}=[fh', kg']_{\sim}.$$
 
  For a class $\E$ of morphisms in $\C$ we define:
 
\begin{definition}\label{Fequivalence}
{\em (1)} The class $\E$ is a {\em stable system} in $\C$ if it %is a class $\E$ of morphisms of $\C$ which 
contains the isomorphisms of $\C$ and is closed under composition and stable under pullback in $\C$.

{\em (2)} For the stable system $\E$ in $\C$ and spans $(f,g),(\tilde{f},\tilde{g}):A\to B$, using the vertical structure of the bicategory $\mathcal{S}\text{pan}(\mathcal C)$, we define the relation $\leq_{\E}$ by
$$(f,g)\leq_{\E}(\tilde{f},\tilde{g})\iff\exists\, x\in\E\;(x:(f,g)\to(\tilde{f},\tilde{g}) \;\emph{in}\; \mathcal{S}pan(\mathcal C)).$$

{\em (3)} 
Since the stable system $\E$ contains the isomorphisms of $\C$ and is closed under composition, $\leq_{\E}$ is reflexive and transitive. The least equivalence relation on ${\sf Span}(\C)$ containing $\leq_{\E}$ is denoted by $\sim_{\E}$; it is described by $\leq_{\E}$-zigzags:	
$$(f, g)\sim_{\E}(\tilde{f},\tilde{g})\iff \exists\; s\geq1, (f_0, g_0),...,(f_s, g_s):$$ $$(f, g)=(f_0, g_0)\leq_{\E}(f_1, g_1)\geq_{\E}(f_2, g_2)\leq_{\E} ... \geq_{\E}(f_s, g_s)=(\tilde{f},\tilde{g}).$$
\end{definition}
We denote the $\sim_{\E}$-equivalence classes of $(f, g)$ by $[f, g]_{\E}$. (Note that, according to our notational convention above, for $[f,g]_{\mathrm{Iso}(\mathcal C)}$ we write just $(f,g)$.)  The horizontal composition of two classes $\xymatrix{A\ar[r]^{[f, g]_{\E}}&B\ar[r]^{[h, k]_{\E}}&C}$ is defined by the horizontal composition of their representatives:
$$[h, k]_{\E}\circ[f, g]_{\E}:=[(h, k)\circ (f, g)]_{\E}=[fh', kg']_{\E}.$$ To make sure that the quotient category
$${\sf Span}_{\E}(\C):={\sf Span}(\C)/\sim_{\E}  $$ is well defined, %the $\F$-quotient of ${\sf Span}(\C)$, 
we have to show:

\begin{proposition}\label{basic lemma}
	For a stable system $\E$ in $\C$, the equivalence relation $\sim_{\E}$ is compatible with the (horizontal) composition of morphisms in ${\sf Span}(\C)$.
\end{proposition}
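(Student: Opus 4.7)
The plan is to reduce compatibility of $\sim_{\E}$ to the analogous statement for the preorder $\leq_{\E}$, which admits a direct pullback argument. Since $\sim_{\E}$ is by definition the zigzag closure of $\leq_{\E}$ (Definition~\ref{Fequivalence}(3)), and since horizontal composition is a two-variable operation, it suffices to check that for any composable $(h,k)\colon B\to C$,
$$(f,g)\leq_{\E}(\tilde f,\tilde g)\ \Longrightarrow\ (h,k)\circ(f,g)\leq_{\E}(h,k)\circ(\tilde f,\tilde g),$$
together with the dual statement that precomposition by a fixed $(f,g)\colon A\to B$ preserves $\leq_{\E}$. Given this, a zigzag witnessing $(f,g)\sim_{\E}(\tilde f,\tilde g)$ translates componentwise into a zigzag of composites, and splicing two such chains (one on each side) yields compatibility of $\sim_{\E}$ with horizontal composition.

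For post-composition, let $x\in\E$ witness $(f,g)\leq_{\E}(\tilde f,\tilde g)$ via $\tilde f x=f$ and $\tilde g x=g$, and choose pullbacks $(h',g')$ of $(g,h)$ with apex $P$ and $(\tilde h',\tilde g')$ of $(\tilde g,h)$ with apex $\tilde P$. The equality $\tilde g(xh')=gh'=hg'$ yields, through the universal property of $\tilde P$, a unique $y\colon P\to\tilde P$ with $\tilde h' y=xh'$ and $\tilde g' y=g'$; the computations $\tilde f\tilde h' y=fh'$ and $k\tilde g' y=kg'$ then exhibit $y$ as a morphism of spans $(fh',kg')\to(\tilde f\tilde h',k\tilde g')$. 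The crucial claim is that the square
$$\xymatrix{P\ar[r]^{h'}\ar[d]_{y} & D\ar[d]^{x} \\ \tilde P\ar[r]_{\tilde h'} & \tilde D}$$
is itself a pullback, so that $y$ arises as the pullback of $x\in\E$ along $\tilde h'$ and hence lies in $\E$ by stability. This is a pullback-pasting argument: the pullback of $(\tilde g,h)$ pasted with the pullback of $(x,\tilde h')$ computes the pullback of $(g,h)$, producing a canonical isomorphism $D\times_{\tilde D}\tilde P\cong D\times_B E=P$ that identifies $y$ with the pullback projection onto $\tilde P$.

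The dual statement for precomposition by a fixed $(f,g)$ can be obtained either by running the mirror argument (pulling back a witness $z\in\E$ on the right factor along $g$), or, more efficiently, by invoking the involution $(-)^{\circ}$ on $\mathsf{Span}(\C)$, which swaps the two legs of every span, exchanges left and right horizontal composition, and clearly preserves $\leq_{\E}$.

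The main obstacle is the pullback-pasting verification that identifies $y$ as the pullback of $x$; once this is settled, stability of $\E$ gives $y\in\E$, and the passage from $\leq_{\E}$ to $\sim_{\E}$ via zigzags and the splicing argument above completes the proof.
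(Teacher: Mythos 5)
Your proposal is correct and follows essentially the same route as the paper: reduce compatibility of $\sim_{\E}$ to one-sided compatibility of $\leq_{\E}$ (handling the other side via the involution $(-)^{\circ}$), then identify the comparison morphism between the two composite spans as a pullback of the witness $x\in\E$ by the pasting/cancellation lemma for pullback squares, so that stability of $\E$ finishes the argument. The only differences are presentational (the order of the reductions and the explicit spelling-out of the zigzag-splicing step).
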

\begin{proof}
	Since we have the isomorphism $(-)^{\circ}$, we only need to show that, for $(f,g),(m,n):A\to B$ and $(h,k):B\to C$, one has the implication
	$$(f,g)\sim_{\E}(m,n)\Longrightarrow (h,k)\circ (f,g)\sim_{\E}(h,k)\circ (m,n).$$
	In fact, it is sufficient to show this implication when $\sim_{\E}$ is replaced by $\leq_{\E}$. Hence, we just show:
	$$(f,g)\leq_{\E}(\tilde{f},\tilde{g}):A\to B,\;(h,k):B\to C\Longrightarrow(h,k)\circ(f,g)\leq_{\E}(h,k)\circ(\tilde{f},\tilde{g}).$$
	%The relation $\leq_{\F}$ is reflexive since $\F$ contains all 
	%isomorphisms. As $\F$ is closed under composition, one has
	%$(f,g)\sim_{\F}(\tilde{f},\tilde{g})$ precisely when
	%$$\exists\,(f_0,g_0), ...,(f_n,g_n): (f,g)=(f_0,g_0)\leq_{\F}(f_1,g_1)\geq_{\F} ... \leq_{\F}(f_n,g_n)=(\tilde{f},\tilde{g})$$
	%for some $n\geq 1$. For showing its compatibility, it therefore suffices to consider the case $n=1$, and for symmetry reasons, we may assume $(h,k)=(\tilde{h},\tilde{k})$. 
	To this end, with $x:(f,g)\to(\tilde{f},\tilde{g})$ for $x\in \E$, and with $(h',g')={\rm pb}(g,h)$ as above and $(\tilde{h}',\tilde{g}')={\rm pb}(\tilde{g},h)$, one obtains a unique morphism $x'$ making the diagram
	\begin{center}
		$\xymatrix{P\ar[r]_{x'}\ar[d]_{h'}\ar@/^1.0pc/[rr]^{g'} & \tilde{P}\ar[r]_{\tilde{g}'}\ar[d]^{\tilde{h}'} & E\ar[d]^{h}\\
			D\ar[r]^{x}\ar@/_1.0pc/[rr]_{g} & \tilde{D}\ar[r]^{\tilde{g}} & B\\
		}$	
	\end{center}
	commute. With the outer rectangle and the right square being pullback diagrams, so is the left square. Hence, as a pullback	 of $x$, also $x'$ lies in $\E$, which shows $(h,k)\circ(f,g)\leq_{\E}(h,k)\circ(\tilde{f},\tilde{g})$.
\end{proof}
If the morphism class $\E$ belongs to a (pullback-)stable factorization system $(\E,\M)$ in $C$, then $\E$ is certainly a stable system, and we may construct the quotient category  ${\sf Span}_{\E}(\C)$. On the other hand, granted the existence of binary products in $\C$, still {\em without} assuming that $\M$ be a class of monomorphisms in $C$, we may also form the category  
%for a stable factorization structure $(\mathcal E,\mathcal M)$ on $\C$, we have the category 
$$\sf{Rel}_{\M}(\C)$$
of {\em $\M$-relations} in $\C$ (see \cite{Milius}); its objects are the objects of $\C$, and a morphism $(f,g):A\rightarrow B$ is (the isomorphism class of) a span in $\C$ such that $\langle f,g\rangle :D\rightarrow A\times B$ lies in $\mathcal M$; its composite with 
%For relations $(f,g):A\rightarrow B$ and 
$(h,k):B\rightarrow C$ in $\sf{Rel}_{\M}(\C)$ is defined by $$(h,k)\cdot(f,g):=(p_1 m,p_2 m),$$ where $m$ is the $\mathcal M$-part of the $(\E,\M)$-factorization of $\langle fh',kg'\rangle :P\rightarrow A\times C$, with $p_1,p_2$ denoting the projections of the product $A\times C$.

\begin{center}
	$\xymatrix{&&&& P\ar[lld]_{h'}\ar[rrd]^{g'} &&&&\\
		&& D\ar@{}[rrrr]|{pb}\ar[lld]_{f}\ar[rrd]^{g} &&&& E\ar[lld]_{h}\ar[rrd]^{k} &&\\
		A && && B &&&& C }$
\end{center}
We note that, for a span $(f,g)$ with the decomposition $\langle f,g\rangle=me$ where $m\in \M$ and $e\in \E$, one clearly has $[f,g]_{\E}=[\pi_1 m,\pi_2 m]_{\E}$, where $\pi_1$ and $\pi_2$ are the projections of $A\times B$.
%\begin{lemma}\label{span=m span}
%	Let $(f,g)$ be a span. Consider the following commutative diagram,
%	\begin{center}
%		$\xymatrix{&& D\ar@/_1.5pc/[lldd]_{f}\ar@/^1.5pc/[rrdd]^{g}\ar[d]^{e} \ar@/_1.5pc/[dd]_{\langle f,g\rangle}&&\\
%			&& R \ar[d]^m && \\
%			A && A\times B\ar[rr]_{\pi_B}\ar[ll]^{\pi_A} && B}$
%	\end{center}
%	in which $m$ is the $\M$-part of $\langle f,g\rangle$.
%	Then $[f,g]_{\E}=[\pi_A m,\pi_B m]_{\E}$.
%\end{lemma}
\begin{theorem}\label{RelasquotientofSpan}
For a stable factorization system $(\E, \M)$ in a finitely complete category $\C$, the categories	$$\sf{Rel}_{\M}(\C)\cong\sf{Span}_{\E}(\C)$$ are isomorphic. If $\C$ is a regular category, with $\E=\mathrm{RegEpi}(\C)$, $\M=\mathrm{Mono}(\C)$ and $\sf{Rel}(\C)=\sf{Rel}_\M(\C)$, one has in particular $$\sf{Rel}(\C)\cong\sf{Span}_\E(\C).$$
\end{theorem}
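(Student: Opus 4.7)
The plan is to exhibit an explicit isomorphism $\Psi: \sf{Rel}_{\M}(\C) \longrightarrow \sf{Span}_{\E}(\C)$ that acts as the identity on objects and sends an $\M$-relation $(f,g)$ (so $\langle f,g\rangle \in \M$) to its $\sim_{\E}$-equivalence class $[f,g]_{\E}$. All three properties to verify---functoriality, surjectivity on hom-sets, and injectivity on hom-sets---will flow from the essential uniqueness of $(\E,\M)$-factorizations, via the observation recorded immediately before the theorem that $[f,g]_{\E}=[\pi_1m,\pi_2m]_{\E}$ whenever $\langle f,g\rangle = me$.

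For functoriality, given $\M$-relations $(f,g):A\to B$ and $(h,k):B\to C$, the $\sf{Rel}_{\M}$-composite $(h,k)\cdot(f,g)=(p_1m,p_2m)$ comes from the $(\E,\M)$-factorization $\langle fh',kg'\rangle = me$ of the span composite, and the observation yields $[p_1m,p_2m]_{\E}=[fh',kg']_{\E}=[h,k]_{\E}\circ[f,g]_{\E}$. Identities are preserved because the span $(1_A,1_A)$ represents the identity on both sides. Surjectivity on hom-sets is immediate from the same observation: any $[f,g]_{\E}$ in $\sf{Span}_{\E}(\C)$ equals $[\pi_1m,\pi_2m]_{\E}$, and $(\pi_1m,\pi_2m)$ is an $\M$-relation since $\langle \pi_1m,\pi_2m\rangle = m\in \M$.

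For injectivity I would introduce the normalization $\nu(f,g):=(\pi_1m,\pi_2m)$ and check that it descends to $\sim_{\E}$-classes. It suffices to prove $\leq_{\E}$-invariance: if $x\in \E$ realises $(f,g)\leq_{\E}(\tilde f,\tilde g)$ via $\langle \tilde f,\tilde g\rangle x = \langle f,g\rangle$, then factoring $\langle \tilde f,\tilde g\rangle = \tilde m\tilde e$ rewrites $\langle f,g\rangle = \tilde m(\tilde e x)$, a second $(\E,\M)$-factorization (using closure of $\E$ under composition), so essential uniqueness gives $\nu(f,g)\cong\nu(\tilde f,\tilde g)$. On an $\M$-relation, $\nu$ acts trivially, because $\langle f,g\rangle = \langle f,g\rangle\cdot 1$ is already an $(\E,\M)$-factorization, forcing the $\E$-part of any other factorization to be invertible. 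Hence two $\M$-relations with the same $\sim_{\E}$-class are already isomorphic as spans, so $\Psi$ is injective.

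I expect this injectivity step to be the main obstacle, since a priori the zigzag in Definition \ref{Fequivalence}(3) may pass through spans that are not $\M$-relations; the normalization $\nu$ sidesteps this by collapsing any such zigzag to a single canonical $\M$-representative. The final assertion is then a direct specialization, since a regular category carries the stable factorization system $(\mathrm{RegEpi}(\C),\mathrm{Mono}(\C))$ and $\sf{Rel}(\C)=\sf{Rel}_{\mathrm{Mono}}(\C)$ by definition.
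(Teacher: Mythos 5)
Your proposal is correct and follows essentially the same route as the paper: your $\Psi$ is the paper's functor $S$, and your normalization $\nu$ is precisely the paper's inverse functor $R$, whose well-definedness on $\sim_{\E}$-classes is likewise deduced from the essential uniqueness of $(\E,\M)$-factorizations applied to a single $\leq_{\E}$-step. The only (harmless) presentational difference is that you conclude via bijectivity on objects and hom-sets rather than checking, as the paper does, that $R$ also preserves composition.
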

\begin{proof}
For every $\M$-relation $(f,g):A\to B$ one has the induced $\sim_{\E}$-equivalence class $S(f,g)=[f,g]_{\E}$ of the span $(f,g)$. With the note above the Theorem it follows immediately that one obtains a functor
	%Writing $[f,g]$ for morphisms in $\sf{Rel}_{\M}(\C)$ and $[f,g]_{\mathcal E}$ for those in $\sf{Span}_{\E}(\C)$, define the mapping 
	$ S:\sf{Rel}_{\M}(\C)\longrightarrow\sf{Span}_{\E}(\C)$ which maps objects identically. %, by taking $[f,g]:A\rightarrow B$ to $[f,g]_\E:A\rightarrow B$. It can be easily verified that $\mathcal R$ is a functor.
	
	%Now define the mapping $\mathcal S:\sf{Span}_{\E}(\C)\longrightarrow\sf{Rel}_{\M}(\C)$
	Conversely we show that taking $[f,g]_{\mathcal E}:A\rightarrow B$ to the $\M$-relation $R[f,g]_{\E}=(\pi_1m,\pi_2m):A\rightarrow B$,
	%, where $\pi_A$ and $\pi_B$ are the projections of $A\times B$ and 
	with $m$ the $\mathcal M$-part of $\langle f,g\rangle $, defines a functor
	$R:\sf{Span}_{\E}(\C)\longrightarrow\sf{Rel}_{\M}(\C)$. Indeed,
		to show that $R$ is well-defined, we suppose that $(f,g)\leq_{\mathcal E}(h,k)$, so that $he=f$ and $ke=g$ for some $e\in\E$. This means $\langle f,g\rangle =\langle h,k\rangle e$, which implies that the $\mathcal{M}$-part of $\langle f,g\rangle $ is isomorphic to the $\mathcal M$-part of $\langle h,k\rangle $.
		
	To show that $R$ preserves the composition, let $\xymatrix{A\ar[r]^{[f,g]_{\E}} & B\ar[r]^{[h,k]_{\E}} & C}$ be a pair of morphisms in $\sf{Span}_{\E}(\C)$ and $\langle f,g\rangle =me$ and $\langle h,k\rangle =m'e'$ for $e,e'\in \mathcal E$ and $m,m'\in \mathcal M$. In the following diagram, $f_1,g_1,h_1,k_1$ are defined to make the bottom triangles with the product projections $\pi_1,\pi_2,p_1,p_2$  commute, and $g',h',g_1',h_1'$ are such that $gh'=hg'$ and $g_1h_1'=h_1g_1'$ become pullback squares.
	\begin{center}
		$\xymatrix{&&&& P \ar[lldd]_{h'}\ar[rrdd]^{g'} &&&&\\
			&&&&&&&&\\
			&& D\ar[lldd]_{f}\ar[d]_{e}\ar[rrdd]^{g} &&Q \ar[lld]_{h_1'}\ar[rrd]^{g_1'}&& E\ar[lldd]_{h}\ar[d]_{e'}\ar[rrdd]^{k} &&\\
			&& \ar[d]_{m}\ar[lld]_{f_1}\ar[rrd]^{g_1} &&&& \ar[d]_{m'}\ar[lld]_{h_1}\ar[rrd]^{k_1} &&\\
			A && A\times B\ar[ll]^{\pi_1}\ar[rr]_{\pi_2} && B && B\times C\ar[ll]^{p_1}\ar[rr]_{p_2} && C}$
	\end{center}
	Let $\xymatrix{A & A\times C\ar[l]_{q_1}\ar[r]^{q_2} & C}$ be product projections and $n$ be the $\mathcal M$-part of the morphism $\langle f_1h_1',k_1g_1'\rangle :Q\rightarrow A\times C$. Then we have 
	$$[h,k]_{\mathcal E}\circ[f,g]_{\mathcal E}=[h_1,k_1]_{\mathcal E}\circ[f_1,g_1]_{\mathcal E}=[f_1h'_1,k_1g'_1]_{\E}=[q_1 n,q_2 n]_{\mathcal E},$$
	and the functoriality of $R$ may be seen easily now:
	\begin{equation*}
	%\begin{split}
	R([h,k]_{\mathcal E}\circ[f,g]_{\mathcal E}) = R[q_1 n,q_2 n]_{\mathcal E}=(q_1 n,q_2 n)=(h_1,k_1)\cdot(f_1,g_1) %=\mathcal S[h_1,k_1]_{\mathcal E}\cdot \mathcal S[f_1,g_1]_{\mathcal E}
	=R[h,k]_{\mathcal E}\cdot R[f,g]_{\mathcal E}.
	%\end{split}
	\end{equation*}
The identities $ RS=\mathrm{Id}$ and $SR=\mathrm{Id}$ follow directly from the definitions of $S$ and $R$.

%Now, since every regular category $\C$ has $(RegEpi, Mono)$ factorization structure, $$\sf{Rel}_{Mono}(\C)\cong\sf{Span}_{RegEpi}(\C).$$	
\end{proof}

\section{Allegories arising as quotients of span categories}
Recall that an {\em allegory} \cite{fs} is a category $\A$ such that 
\begin{itemize}
\item $\A$ is poset-enriched, so that each hom-set carries a partial order which is preserved by the composition $\cdot$ of $\A$; % and, thus, makes $\A$ a 2-category;   
\item $\A$ carries an involution $(-)^{\circ}:\A^{\rm op}\to \A$, {\em{i.e.}}, a contravariant self-inverse functor; 
\item binary infima exist in each hom-set, and the involution preserves them;
\item {\em Freyd's modular law} holds in $\A$:
$$ (s\cdot r)\wedge t\leq s\cdot (r\wedge (s^{\circ}\cdot t)),$$
for all $r:A\to B,\, s:B\to C,\, t:A\to C$ in $\A$.
\end{itemize}
Of the various derived rules valid in an allegory, we will use in particular the {\em special modular law}:
$$r\leq r\cdot r^{\circ}\cdot r$$
for all morphisms $r$ in $\A$ (see \cite[2.112]{fs}); indeed, with Freyd's modular law, one has $$r\leq (r\cdot 1)\wedge r\leq r\cdot(1\wedge(r^{\circ}\cdot r))\leq r\cdot(r^{\circ}\cdot r).$$

As mentioned in the Introduction, the standard example of an allegory is the category ${\sf Rel}(\C)$ for a  
regular category $\C$, and this fact may be generalized to the case of a stable factorization system $(\E,\M)$ in $\C$, provided that $\M$ is a class of monomorphisms in 
$\C$; indeed, Corollary \ref{F contains Ret} provides a novel proof that ${\sf Rel}_{\M}(\C)$ is an allegory under this provision. 
%It has been known that the provision is essential: as shown in \cite{Milius}, for the stable factorization system (bijective on objects, fully faithful) of the ordinary category 
%${\sf Cat}$ of small categories, ${\sf Rel}_{\rm{fully\, faithful}}({\sf Cat})$ fails to be an allegory. 
In Theorem \ref{nec suf con rel be all} we prove that the provision $\M\subseteq {\rm{Mono}}(\C)$ is actually necessary for ${\sf Rel}_{\M}(\C)$ being an allegory.

Since, in the presence of a stable factorization system, ${\sf Rel}_{\M}(\C)$ has been presented as a quotient of ${\sf Span}(\C)$ (see Theorem \ref{RelasquotientofSpan}), without any reference to such a system it is natural to ask {\em what it takes for an equivalence relation 
$\sim$ on ${\sf Span}(\C)$ to make the quotient category with its horizontal composition an allegory}. %To this end, we first point out that, in a natural manner, ${\sf Span}(\C)$ actually comes equipped with some structure that is fundamental in trying to answer the question. 

To answer this question, we first recall that one trivially has the involution
$${\sf Span}(\C)^{\rm op}\to{\sf Span}(\C),\;(f,g)\mapsto(f,g)^{\circ}=(g,f).$$
Next, the hom-sets of ${\sf Span}(\C)$ inherit from the local categories of the bicategory $\mathcal{S}\emph{pan}(\mathcal C)$ a (up to isomorphism) commutative and associative operation which, by the isomorphism %given by the direct product in the (vertical) categories
$$\mathcal{S}\emph{pan}(\mathcal C)(A,B) \cong \C/A\times B$$
for all objects $A,B$ in $\C$, is given by the direct product in the comma category on the right.
Denoting the direct product in $\C/A\times B$
of (the induced arrows to $A\times B$ of) spans $(f,g),(p,q):A\to B$ by $(f,g)\wedge(p,q)$, we observe that it is constructed by pullback in $\C$, as 
$$(f,g)\wedge(p,q)=(\pi_1d,\pi_2d),\quad{\rm with}\quad d=\langle f,g\rangle \rho_1=\langle p,q\rangle\rho_2,$$
where $\rho_1,\rho_2$ are pullback projections and $\pi_1,\pi_2$ are product projections in $\C$.
\begin{equation}\label{def of meet in span}
\xymatrix{&& P\ar[ld]_{\rho_1}\ar[rd]^{\rho_2}\ar[dd]_d && \\
	& D\ar[ld]_{f}\ar[rd]^{\langle f, g\rangle} & & F\ar[ld]_{\langle p,q\rangle}\ar[rd]^{q} & \\
	A && A\times B\ar[ll]_{\pi_1}\ar[rr]^{\pi_2} && B}
\end{equation}
As an operation on the hom-sets of ${\sf Span}(\C)$, $\wedge$ is (strictly) commutative and associative; furthermore, $(\pi_1,\pi_2)$ is neutral in ${\sf Span}(\C)(A,B)$ with respect to $\wedge$. More importantly,
the diagram above shows immediately that $(f,g)\wedge(p,q)$ may be presented as a horizontal composite, as in formula (1) of the following proposition, which also lists three further important observations.
\begin{proposition}\label{preallegory}
For all spans $(f,g),(p,q):A\to B,\; (h,k):B\to C,\;(m,n):A\to C$ one has:
\begin{itemize}
\item[\em(1)] $(f,g)\wedge(p,q)=(\langle p,q\rangle,q)\circ(f,\langle f,g\rangle)$ {\em (meet via composition)};
\item[\em(2)] $(f,f)\wedge(1_A,1_A)=(f,f)$ {\em (meet neutrality of identity morphisms)};
\item[\em(3)] there is a canonical 2-cell $$d:(h,k)\circ((f,g)\wedge(p,q))\longrightarrow((h,k)\circ(f,g))\wedge((h,k)\circ(p,q))$$
in $\S\text{pan}(\C)$, witnessing the {\em lax distributivity of the composition over meet};
\item[\em(4)] there is a canonical 2-cell $$c: ((h,k)\circ(f,g))\wedge(m,n)\longrightarrow(h,k)\circ((f,g)\wedge((h,k)^{\circ}\circ(m,n)))$$
in $\S\text{pan}(\C)$, which we call the {\em Freyd modularity morphism} of the given spans.
%\item[\em(4)] there is a canonical morphism $$d:(h,k)\circ((f,g)\wedge(p,q))\longrightarrow((h,k)\circ(f,g))\wedge((h,k)\circ(p,q))$$
%in ${\sf Span}(\C)(A,C)$,
%which we call the {\em distributivity morphism} of the spans in question.
\end{itemize}
\end{proposition}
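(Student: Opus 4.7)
The plan is to verify each of the four claims directly from the pullback formula for $\wedge$ given in diagram (\ref{def of meet in span}) and the pullback formula for horizontal composition of spans. I expect (1) and (2) to be one-line observations, while (3) and (4) will require producing comparison morphisms between nested pullbacks.

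For (1), I will observe that the composite $(\langle p,q\rangle, q)\circ (f,\langle f,g\rangle)$ is formed by the pullback of $\langle f,g\rangle$ along $\langle p,q\rangle$, which is precisely the square with apex $P$ appearing in diagram (\ref{def of meet in span}). The resulting legs $f\rho_1$ and $q\rho_2$ coincide with $\pi_1 d$ and $\pi_2 d$, since $d = \langle f,g\rangle\rho_1 = \langle p,q\rangle\rho_2$. For (2), I will use that $\langle f,f\rangle$ factors as $\Delta_A\circ f$ where $\Delta_A = \langle 1_A,1_A\rangle$ is a split monomorphism (a common section of $\pi_1$ and $\pi_2$). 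The defining pullback then collapses: pulling back $\Delta_A\circ f$ along $\Delta_A$ returns $D$ itself with $d = \Delta_A\circ f$, so the resulting span is $(\pi_1\Delta_A f,\pi_2\Delta_A f) = (f,f)$.

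For (3), I name the relevant apices: $P = D\times_{A\times B} F$ for $(f,g)\wedge(p,q)$, and $L = P\times_B E$ for the left-hand composite (with $E$ the apex of $(h,k)$); on the right, set $D_1 = D\times_B E$, $F_1 = F\times_B E$, and $P' = D_1\times_{A\times C} F_1$. The canonical projections from $L$ to $D$, $F$, and $E$ (compatible over $B$) factor through $D_1$ and through $F_1$; their postcompositions to $A\times C$ agree because the $A$-coordinates are identified inside $P$ and the $C$-coordinates share a common $E$-factor. The universal property of $P'$ then yields the required 2-cell $d: L\to P'$.

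For (4) — which I expect to be the main obstacle — the apex of the left-hand span is $L_1 = (D\times_B E)\times_{A\times C} M$ with $M$ the apex of $(m,n)$, while the right-hand apex is built by iterated pullbacks as $L_2 = \bigl(D\times_{A\times B}(M\times_C E)\bigr)\times_B E$. My plan is to read off from $L_1$ the coherent data $(d,e,m')$ satisfying $g(d)=h(e)$, $f(d)=m(m')$, $k(e)=n(m')$, and then to produce $c: L_1\to L_2$ via the assignment $(d,e,m')\mapsto (d,(m',e),e)$, reusing $e$ in both of its roles in $L_2$. The verification is a sequence of pullback-universality checks: $n(m')=k(e)$ supplies the $Q = M\times_C E$ factor; the pair $f(d)=m(m')$ and $g(d)=h(e)$ together with the factorization through $A\times B$ supplies the $P_1 = D\times_{A\times B} Q$ factor; and $g(d)=h(e)$ once more supplies the $B$-agreement defining $L_2$. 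Compatibility with the projections to $A$ and to $C$ is then automatic, so $c$ is a genuine 2-cell of spans.
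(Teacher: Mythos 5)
Your proposal is correct and follows essentially the same route as the paper: (1) and (2) are read off from the defining pullback diagram, and (3) and (4) are obtained by assembling comparison morphisms between the iterated pullbacks via their universal properties, with your element-style map $(d,e,m')\mapsto(d,(m',e),e)$ being exactly the paper's stepwise construction of the morphisms $a$, $b$ and then $c$ in different notation.
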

\begin{proof}
	Formula (1) follows from the above diagram, and (2) follows easily from the definition. For proving (3), we let the composites $(h,k)\circ(
f,g)$ and $(h,k)\circ(p,q)$ be constructed by the pullback squares of the diagrams
\begin{center}
$\xymatrix{&& P\ar[ld]_{h'}\ar[rd]^{g'} &&\\
&D\ar[rd]_{g}\ar[ld]_{f} && E\ar[ld]^{h}\ar[rd]^{k}&\\
A && B && C\\
}$	
\hfil$\xymatrix{&& Q\ar[rd]^{q'}\ar[ld]_{h''} &&\\
&F\ar[rd]_{q}\ar[ld]_{p} && E\ar[ld]^{h}\ar[rd]^{k} &\\
A && B && C\\
}$	
\end{center}
and the meets $(f,g)\wedge(p,q)$ and $((h,k)\circ(f,g))\wedge((h,k)\circ(p,q)))$ by the pullback diagrams
\begin{center}
$\xymatrix{& U\ar[ld]_{u_1}\ar[rd]^{u_2} &\\
D\ar[rd]_{\langle f,g\rangle} && F\ar[ld]^{\langle p,q\rangle}\\
& A\times B &
}$	
\hfil$\xymatrix{& V\ar[rd]^{v_2}\ar[ld]_{v_1} &\\
P\ar[rd]_{\langle fh',kg'\rangle} && Q\ar[ld]^{\langle ph'',kq'\rangle}\\
& A\times C &\
}$	
\end{center}
With the composite $(h,k)\circ((f,g)\wedge(p,q))$ given by 
\begin{center}
$\xymatrix{&& W\ar[ld]_{w_1}\ar[rd]^{w_2} &&\\
&U\ar[rd]|{gu_1=qu_2}\ar[ld]|{fu_1=pu_2} && E\ar[ld]^{h}\ar[rd]^{k}&\\
A && B && C\\
}$	
\end{center}
one first obtains the morphisms $x:W\to P,\;y:W\to Q$, uniquely determined by $h'x=u_1w_1,\, g'x=w_2,\,h''y=u_2w_1,\,q'y=w_2$, and then the unique morphism $d:W\to V$ 
with $v_1d=x,\,v_2d=y$. One readily checks that $d$ is the desired 2-cell of spans.
 
For showing (4), we let the composites $(h,k)\circ(
f,g)$ and $(h,k)^{\circ}\circ(m,n)$ be constructed by the pullback squares of the diagrams
\begin{center}
$\xymatrix{&& P\ar[ld]_{h'}\ar[rd]^{g'} &&\\
&D\ar[rd]_{g}\ar[ld]_{f} && E\ar[ld]^{h}\ar[rd]^{k}&\\
A&& B &&C\\
}$	
\hfil$\xymatrix{&& Q\ar[rd]^{n'}\ar[ld]_{k'} &&\\
&G\ar[rd]_{n}\ar[ld]_{m} && E\ar[ld]^{k}\ar[rd]^{h}&\\
A&& C&&B\\
}$	
\end{center}
and the meets $((h,k)\circ(f,g))\wedge(m,n)$ and $(f,g)\wedge((h,k)^{\circ}\circ(m,n))$ by the pullback diagrams
\begin{center}
$\xymatrix{& S\ar[ld]_{s_1}\ar[rd]^{s_2} &\\
P\ar[rd]_{\langle fh',kg'\rangle} && G\ar[ld]^{\langle m,n\rangle}\\
& A\times C &
}$	
\hfil$\xymatrix{& T\ar[rd]^{t_2}\ar[ld]_{t_1} &\\
D\ar[rd]_{\langle f,g\rangle} && Q\ar[ld]^{\langle mk',hn'\rangle}\\
& A\times B &\
}$	
\end{center}
One readily verifies that there is a unique morphism $a:S\to Q$ satisfying $k'a=s_2,\, n'a=g's_1$, and then a unique morphism $b:S\to T$ with $t_1b=h's_1,\,t_2b=a$. With the diagram
\begin{center}
$\xymatrix{&& R\ar[ld]_{r_1}\ar[rd]^{r_2} &&\\
&T\ar[rd]_{gt_1}\ar[ld]_{ft_1} && E\ar[ld]^{h}\ar[rd]^{k}&\\
A&& B &&C\\
}$	
\end{center}
displaying the span $(h,k)\circ((f,g)\wedge((h,k)^{\circ}\circ(m,n)))$, we finally obtain the unique morphism $c:S\to R$ in $\C$ with $r_1c=b,\,r_2c=n'a$. The identities $(ft_1r_1)c=ft_1b=fh's$ and $(kr_2)c=kn'a=kg's_1=ns_2$ show that $c$ is in fact the desired 2-cell of spans.
\end{proof}

We will return to items (2--4) of Proposition \ref{preallegory} only in Theorem \ref{Spanallegory}. Item (1) helps us to first specify the properties of an equivalence relation for spans which will enable us to form a quotient category that inherits the operations $(-)^{\circ}$ and $\wedge$, as follows.
\begin{definition} Let $\sim$ be a relation for spans in $\C$ such that only spans with the same domain and codomain may be related, and that vertically isomorphic spans are always related (so that $\sim$ is reflexive for the isomorphism classes of spans). We call $\sim$
\begin{itemize}
\item {\em inv(olution)-compatible} if $(f,g)\sim(\tilde{f},\tilde{g})$ implies $(g,f)\sim(\tilde{g},\tilde{f})$; 
 \item {\em comp(osition)-compatible} if $(f,g)\sim(\tilde{f},\tilde{g})$ implies $(h,k)\circ(f,g)\sim(h,k)\circ(\tilde{f},\tilde{g})$;
	\item {\em meet-compatible} if $(f,g)\sim(\tilde{f},\tilde{g})$ implies $(f,\langle f,g\rangle)\sim(\tilde{f},\langle\tilde{f},\tilde{g}\rangle)$,
\end{itemize}
whenever $(f,g),(\tilde{f},\tilde{g}):A\to B,\,(h,k):B\to C$. If $\sim$ enjoys all three properties, we will simply say that $\sim$ is {\em compatible}. 
\end{definition}

\noindent When $\sim$ is inv-compatible, it is easy to see that then  $\sim$ is
\begin{itemize}
	\item comp-compatible if, and only if, $(h,k)\sim(\tilde{h},\tilde{k})$ implies $(h,k)\circ(f,g)\sim(\tilde{h},\tilde{k})\circ(f,g), $
	 \end{itemize}
	 and when $\sim$ is inv- and comp-compatible, then $\sim$ is
	 \begin{itemize}
	\item meet-compatible if, and only if, $(f,g)\sim(\tilde{f},\tilde{g})$ implies $(\langle f,g\rangle,g)\sim(\langle \tilde{f},\tilde{g}\rangle,\tilde{g})$.	
	\end{itemize}
	For the last claim, exploit the defining property for $(g,f)\sim(\tilde{g},\tilde{f})$ and observe that one has $(g, \langle f,g\rangle)=(1_{B\times A}, t)\circ(g,\langle g,f\rangle)$, with the ``twist'' morphism $t:B\times A\to A\times B$.
		
It is a routine exercise, and a fact used frequently in this paper, that {\em each of the three compatibility properties is inherited from the given relation by the least equivalence relation generated by it}. 

 For a compatible equivalence relation $\sim$ for spans in $\C$, we denote the $\sim$-equivalence class of $(f,g)$ by $[f,g]_{\sim}$, or just by $[f,g]$ when the context makes it clear which relation $\sim$ we are referring to. We can now define
 \begin{itemize}
 \item $[f,g]^{\circ}:=[g,f]$;
 \item $[h,k]\circ[f,g]:=[(h,k)\circ(f,g)]$;
 \item $[f,g]\wedge[p,q]:=[(f,g)\wedge(p,q)]$.
 \end{itemize}	
One can easily verify that the above operations are well-defined for $\sim$-equivalence classes. Writing
$$\sf{Span}_{\sim}(\C)$$ %=\mathsf{Span}(\C)/\sim$$
for the resulting quotient category with its horizontal composition, we can now summarize our observations, as follows:
\begin{proposition}\label{almostallegory}
For a compatible equivalence relation $\sim$ for spans in $\C$, the quotient category  	$\sf{Span}_{\sim}(\C)$ carries an involution $(-)^{\circ}:\sf{Span}_{\sim}(\C)^{\rm op}\to\sf{Span}_{\sim}(\C)$ and a commutative, associative binary operation $\wedge$ that is preserved by the involution. The two operations are determined by the property that the projection functor ${\sf Span}(\C)\to{\sf Span}_{\sim}(\C)$ becomes a homomorphism with respect to them.
\end{proposition}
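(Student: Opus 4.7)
The plan is to verify that each of the three operations $(-)^{\circ}$, $\circ$ and $\wedge$ is well-defined on $\sim$-equivalence classes, and then to transfer the remaining identities from $\mathsf{Span}(\C)$ to the quotient; uniqueness will follow from surjectivity of the projection functor.

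First, I would handle the involution and horizontal composition. Setting $[f,g]^{\circ}:=[g,f]$ is well-defined by inv-compatibility, and the equations $(-)^{\circ\circ}=\mathrm{id}$ and $((h,k)\circ(f,g))^{\circ}=(f,g)^{\circ}\circ(h,k)^{\circ}$ already hold in $\mathsf{Span}(\C)$ and descend to the quotient. For $[h,k]\circ[f,g]$, well-definedness in the right-hand argument is exactly comp-compatibility, while well-definedness in the left-hand argument is the equivalent formulation noted in the paragraph before the proposition (which follows from inv- and comp-compatibility together).

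The main technical step is to establish that $\wedge$ is well-defined on $\sim$-classes, and here the key ingredient is formula (1) of Proposition \ref{preallegory}:
$$(f,g)\wedge(p,q)=(\langle p,q\rangle,q)\circ(f,\langle f,g\rangle).$$
If $(f,g)\sim(\tilde{f},\tilde{g})$, then meet-compatibility yields $(f,\langle f,g\rangle)\sim(\tilde{f},\langle\tilde{f},\tilde{g}\rangle)$, and composing on the left with $(\langle p,q\rangle,q)$ via comp-compatibility gives $(f,g)\wedge(p,q)\sim(\tilde{f},\tilde{g})\wedge(p,q)$. For a change $(p,q)\sim(\tilde{p},\tilde{q})$ in the second argument I would use the dual form of meet-compatibility noted before the proposition, namely $(\langle p,q\rangle,q)\sim(\langle\tilde{p},\tilde{q}\rangle,\tilde{q})$, and compose on the right with $(f,\langle f,g\rangle)$.

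With well-definedness in hand, the remaining identities descend from $\mathsf{Span}(\C)$: strict commutativity and associativity of $\wedge$ are noted in the paragraph following diagram (\ref{def of meet in span}), and the identity $((f,g)\wedge(p,q))^{\circ}=(g,f)\wedge(q,p)$ is read off that diagram by precomposing the morphism $d$ with the twist $A\times B\to B\times A$, which identifies the pullback that computes the right-hand side with $P$ and swaps its two projections. Uniqueness is automatic: the projection $\mathsf{Span}(\C)\to\mathsf{Span}_{\sim}(\C)$ is the identity on objects and surjective on morphisms, so demanding that it be a homomorphism for $(-)^{\circ}$ and $\wedge$ prescribes both operations on representatives. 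I expect the well-definedness of $\wedge$ to be the main point requiring care, since it is the only step in which all three compatibility conditions must be invoked simultaneously; everything else is routine transfer from $\mathsf{Span}(\C)$.
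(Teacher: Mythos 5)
Your proof is correct and takes essentially the same approach as the paper, which leaves the verification implicit (``One can easily verify that the above operations are well-defined'') but sets up exactly the tools you use: Proposition~\ref{preallegory}(1) for reducing well-definedness of $\wedge$ to comp- and meet-compatibility, and the equivalent one-sided reformulations of the compatibility conditions noted before the proposition. Nothing further is needed.
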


We can now narrow down the question posed at the beginning of this section and ask, under which condition on a compatible equivalence relation $\sim$ the quotient category ${\sf Span}_{\sim}(\C)$	 may become an allegory when equipped with the involution and the meet operation as described by the proposition. For the meet operation to induce a partial order on the hom-sets one needs the idempotency law $[f,g]\wedge[f,g]=[f,g]$ to hold in ${\sf Span}_{\sim}(\C)$. The following definition describes an easy sufficient condition for that law to hold, which will turn out to be necessary as well.

\begin{definition}\label{defallegoricalrelation}
We call a relation $\sim$ for $\C$-spans {\em allegorical} if 
$$(1_A,f)\sim(f,f)\circ(1_A,f)$$
for all morphisms $f:A\to B$ in $\C$. If $\sim$ is inv-compatible, this condition is equivalent to
$$(f,1_A)\sim(f,1_A)\circ(f,f)$$	
for all morphisms $f$. Note that, with the {\em graph functor}
$$\Gamma:\C\to{\sf Span}(\C),\; (f:A\to B)\longmapsto((1_A,f):A\to B),$$	
these conditions may also be written as 
$\Gamma f\sim\Gamma f\circ(\Gamma f)^{\circ}\circ\Gamma f$ or
$(\Gamma f)^{\circ}\sim(\Gamma f)^{\circ}\circ\Gamma f\circ(\Gamma f)^{\circ},$
simply because every span $(f,g)$ may be written as $$(f,g)=\Gamma g\circ(\Gamma f)^{\circ}.$$
\end{definition}
The following lemma is fundamental for our further investigations:
\begin{lemma}\label{meetidempotent}
Let the relation $\sim$ for $\C$-spans be inv- and comp-compatible and allegorical. If there exists a 2-cell $u:(f,g)\to(p,q)$ in $\S \text{pan}(\C)$, then
$$(f,g)\wedge(p,q)\sim(f,g).$$ 
In particular, for all spans $(f,g)$ one has $(f,g)\wedge(f,g)\sim(f,g)$.	
\end{lemma}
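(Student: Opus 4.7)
The plan is to derive the result from the allegorical axiom applied to the pairing $k:=\langle p,q\rangle:F\to A\times B$, using comp-compatibility to propagate the $\sim$-equivalence into the desired form. The ``in particular'' clause is the special case $(p,q)=(f,g)$, $u=1_D$, so it suffices to establish the general statement.

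Write $h:=\langle f,g\rangle:D\to A\times B$; the existence of the 2-cell $u$ means $ku=h$, equivalently $\Gamma k\circ\Gamma u=\Gamma h$ by functoriality of the graph functor. Let $P=D\times_{A\times B}F$ with pullback projections $\rho_1:P\to D$ and $\rho_2:P\to F$, so that by construction $(f,g)\wedge(p,q)=(f\rho_1,g\rho_1)$. Using $g=\pi_B h$, direct pullback calculations in $\S\text{pan}(\C)$ give
$$(f,g)=\Gamma\pi_B\circ(f,h)\qquad\text{and}\qquad(f,g)\wedge(p,q)=\Gamma\pi_B\circ(f\rho_1,h\rho_1),$$
so by comp-compatibility it suffices to prove $(f,h)\sim(f\rho_1,h\rho_1)$ as spans $A\to A\times B$.

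The allegorical condition applied to $k$ yields $\Gamma k\sim(k_1,kk_2)$, where $k_1,k_2:F\times_{A\times B}F\to F$ are the kernel-pair projections of $k$. Composing on the right successively with $\Gamma u=(1_D,u)$ and with $(f,1_D)=(\Gamma f)^\circ$, and invoking comp-compatibility in its right-composition form at each step (which is available since $\sim$ is also inv-compatible), produces
$$(f,h)=\Gamma k\circ\Gamma u\circ(f,1_D)\ \sim\ (k_1,kk_2)\circ\Gamma u\circ(f,1_D)=(f\rho_1,h\rho_1).$$
The identification of the right-hand composite with $(f\rho_1,h\rho_1)$ hinges on the pullback-pasting isomorphism $D\times_F(F\times_{A\times B}F)\cong D\times_{A\times B}F=P$, whose projections $q_1,q_2$ satisfy $q_1=\rho_1$ and $k_2q_2=\rho_2$. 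Combined with the reduction of the previous paragraph, this gives $(f,g)\sim(f,g)\wedge(p,q)$.

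The main obstacle is the careful bookkeeping of these iterated pullback compositions, in particular the pasting identification that threads the kernel pair of $k$ through $P$. Conceptually, the allegorical axiom---a statement about the kernel pair of a single morphism---gains enough strength, through comp-compatibility and the functoriality of $\Gamma$, to subsume every 2-cell of $\S\text{pan}(\C)$, which is precisely what the lemma expresses.
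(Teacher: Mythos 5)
Your proof is correct and follows essentially the same route as the paper's: both apply the allegorical condition to the single morphism $\langle p,q\rangle$ and propagate it through the composite decomposition of the meet using comp- and inv-compatibility. The only difference is presentational --- the paper invokes its ready-made formula $(f,g)\wedge(p,q)=(\langle p,q\rangle,q)\circ(f,\langle f,g\rangle)$ from Proposition \ref{preallegory}(1) and then refactors, whereas you re-derive the same decomposition by explicit pullback pasting through $P$ and the kernel pair of $\langle p,q\rangle$.
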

\begin{proof}
Putting $r=\langle f, g\rangle :D\to A\times B,\;s=\langle p,q\rangle:E\to A\times B$, so that $su=r$, and denoting by $\pi_2:A\times B\to B$ the product projection, under the assumptions on  $\sim$ we obtain
\begin{align*}	
 (f,g)\wedge(p,q) & =(s,q)\circ(f,r)\\	
   & = (1_{A\times B},\pi_2)\circ(s,s)\circ(1_E,s)\circ(f,u)\\
   & \sim (1_{A\times B},\pi_2)\circ(1_E,s)\circ(f,u)\\
   & = (1_E,q)\circ(f,u)\\
   & = (f,g).
\end{align*}
\end{proof}
With Proposition \ref{preallegory}, our Lemma \ref{meetidempotent} facilitates the proof of the following theorem.
\begin{theorem}\label{Spanallegory} For a compatible equivalence relation $\sim$ for spans, the operations $(-)^{\circ}$ and $\wedge$ of Proposition {\em \ref{almostallegory}} make ${\sf Span}_{\sim}(\C)$ an allegory if, and only if, $\sim$ is allegorical.	
\end{theorem}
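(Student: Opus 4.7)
The plan is to derive both directions from Lemma \ref{meetidempotent} together with the structural content of Proposition \ref{preallegory}, combined for necessity with the standard allegory fact that any morphism $r$ with $r\circ r^\circ\leq 1$ satisfies $r\circ r^\circ\circ r=r$ (immediate from $r\leq r\circ r^\circ\circ r\leq 1\circ r=r$).

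For sufficiency, assume $\sim$ is allegorical. Proposition \ref{almostallegory} already supplies the involution and the strictly commutative, associative operation $\wedge$, preserved by the involution. Lemma \ref{meetidempotent} then gives $[f,g]\wedge[f,g]=[f,g]$, turning $\wedge$ into a meet-semilattice operation and hence inducing a partial order $[f,g]\leq[p,q]\iff[f,g]\wedge[p,q]=[f,g]$ on each hom-set for which $\wedge$ is the binary infimum; the involution preserves this order since it preserves $\wedge$. Applying Lemma \ref{meetidempotent} to the 2-cell $d$ of Proposition \ref{preallegory}(3) yields $[h,k]\circ([f,g]\wedge[p,q])\leq([h,k]\circ[f,g])\wedge([h,k]\circ[p,q])$, which together with the trivial $x\wedge y\leq y$ gives left-monotonicity of composition; right-monotonicity then follows by involutional self-duality. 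The same lemma applied to the Freyd modularity 2-cell $c$ of Proposition \ref{preallegory}(4) directly delivers Freyd's modular law.

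For necessity, assume ${\sf Span}_\sim(\C)$ is an allegory and fix $f:A\to B$; I aim to show $\Gamma f\circ(\Gamma f)^\circ\leq 1_B$, where $\Gamma f=[1_A,f]$. Granting this, the chain $\Gamma f\leq\Gamma f\circ(\Gamma f)^\circ\circ\Gamma f\leq 1_B\circ\Gamma f=\Gamma f$ (using special modularity and monotonicity of composition) forces equality, and a direct pullback calculation identifies $\Gamma f\circ(\Gamma f)^\circ\circ\Gamma f$ with $[(f,f)\circ(1_A,f)]$, so the resulting equality is precisely the allegoricality condition $(1_A,f)\sim(f,f)\circ(1_A,f)$. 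The claimed inequality $\Gamma f\circ(\Gamma f)^\circ\leq 1_B$ reduces, via the pullback formula computing $\Gamma f\circ(\Gamma f)^\circ=[f,f]$ and the meet formula in Proposition \ref{almostallegory}, to the span-level identity $(f,f)\wedge(1_B,1_B)\cong(f,f)$, which holds because $\langle f,f\rangle=\Delta_B\circ f$ factors through the diagonal $\Delta_B=\langle 1_B,1_B\rangle$.

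The main obstacle I anticipate is keeping the orientation of the 2-cells of Proposition \ref{preallegory}(3,4) aligned with the $\leq$-direction produced by Lemma \ref{meetidempotent}: the passage from a 2-cell $u:X\to Y$ in $\mathcal{S}\emph{pan}(\C)$ to $[X]\leq[Y]$ in the quotient is the only bridge from the span-level categorical structure to allegorical inequalities, and confusing the orientation would derail both the monotonicity and the modular-law arguments.
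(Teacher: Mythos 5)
Your proposal is correct and follows essentially the same route as the paper: sufficiency via Proposition \ref{almostallegory}, Lemma \ref{meetidempotent} applied to the 2-cells of Proposition \ref{preallegory}(3,4); necessity via the special modular law together with the meet-neutrality of identities (your observation that $\langle f,f\rangle$ factors through $\Delta_B$ is exactly Proposition \ref{preallegory}(2)), collapsing the chain $\Gamma f\leq\Gamma f\circ(\Gamma f)^{\circ}\circ\Gamma f\leq\Gamma f$ to the allegoricality identity. The only difference is cosmetic: you isolate $[f,f]\leq[1_B,1_B]$ as a separate step, whereas the paper inserts $[f,f]=[f,f]\wedge[1_B,1_B]$ directly into the inequality chain.
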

\begin{proof}
Suppose first that ${\sf Span}_{\sim}(\C)$ is an allegory, with the operations as described. Hence, its hom-sets carry the partial order induced by $\wedge$, that is: 
$$(\ast)\qquad\qquad[f,g]\leq[m,n]\Longleftrightarrow[f,g]\wedge[m,n]=[f,g].$$	 
By first applying the special modular law and then Proposition \ref{preallegory}(2) we obtain for all $f:A\to B$ in $\C$ the inequalities
\begin{align*}
[1_A,f] & \leq [1_A,f]\circ[1_A,f]^{\circ}\circ[1_A,f]\\
& = [f,f]\circ[1_A,f]\\
& =	([f,f]\wedge[1_B,1_B])\circ[1_A,f]\\
& \leq [1_B,1_B]\circ[1_A,f]\\
& = [1_A,f].
\end{align*}
Consequently, $(1_A,f)\sim(f,f)\circ(1_A,f)$, as desired.

Conversely, assuming $\sim$ to be allegorical, by Proposition \ref{almostallegory} and Lemma \ref{meetidempotent}, 
${\sf Span}_{\sim}(\C)$ carries the involution 
$(-)^{\circ}$ and the operation 
$\wedge$ which induces a partial order on the hom-sets, as in $(\ast)$ above.
Furthermore, from Proposition \ref{preallegory}(3) and Lemma \ref{meetidempotent} one obtains
$$[h,k]\circ([f,g]\wedge[p,q])\leq([h,k]\circ[f,g])\wedge([h,k]\circ[p,q]),$$
which renders the composition of ${\sf Span}_{\sim}(\C)$ monotone. Likewise, Proposition \ref{preallegory}(4) and Lemma \ref{meetidempotent} show that Freyd's modular law holds in ${\sf Span}_{\sim}(\C)$.
\end{proof}
The existence of a 2-cell $u:(f,g)\to(p,q)$ as in Lemma \ref{meetidempotent} describes the natural preorder for spans, seen as the objects of the local categories of the bicategory $\S\text{pan}(\C)$. Considering their reflection onto posets produces the largest allegorical quotient of ${\sf Span}(\C)$, as follows:

\begin{corollary}\label{leastallegorical}
There is a least compatible and allegorical equivalence relation $\approx $ on ${\sf Span}(\C)$, described by
$$(f,g)\approx(p,q)\Longleftrightarrow\exists\, u, v\;(u:(f,g)\to(p,q),\;v:(p,q)\to(f,g)).$$
Consequently, for every compatible and allegorical relation $\sim$ on ${\sf Span}(\C)$,  one has a projection functor
$${\sf Span}_{\approx}(\C)\to{\sf Span}_{\sim}(\C),\,[f,g]_{\approx}\mapsto[f,g]_{\sim},$$
which maps objects identically and is a morphism of allegories, i.e., it preserves the involution and the binary meet.
\end{corollary}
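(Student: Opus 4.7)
The plan is to first verify that the relation $\approx$ defined by mutual existence of 2-cells is itself a compatible and allegorical equivalence relation, and then to use Lemma \ref{meetidempotent} to squeeze it below any other such relation.

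For the first step, I would check each required property in turn. Reflexivity of $\approx$ uses the identity 2-cell; symmetry is built into the definition; and transitivity follows by vertical composition of 2-cells in $\mathcal{S}\text{pan}(\C)$. Inv-compatibility is immediate: any $u:(f,g)\to(\tilde f,\tilde g)$ serves equally as $u:(g,f)\to(\tilde g,\tilde f)$. Comp-compatibility is handled exactly by the pullback argument already given in the proof of Proposition \ref{basic lemma}, except that the induced morphism $x'$ need not lie in a specified class. For meet-compatibility, if $\tilde f u=f$ and $\tilde g u=g$, then $\langle\tilde f,\tilde g\rangle u=\langle f,g\rangle$, so $u$ is automatically a 2-cell $(f,\langle f,g\rangle)\to(\tilde f,\langle\tilde f,\tilde g\rangle)$. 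Finally, to see that $\approx$ is allegorical, I would compute the composite $(f,f)\circ(1_A,f)$ via the pullback $p_1,p_2:A\times_B A\to A$ of $f$ along itself, obtaining the span $(p_1,fp_2)$; the diagonal $\langle1_A,1_A\rangle:A\to A\times_B A$ gives a 2-cell $(1_A,f)\to(p_1,fp_2)$, while $p_1:A\times_B A\to A$ gives a 2-cell $(p_1,fp_2)\to(1_A,f)$, using $fp_1=fp_2$.

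For minimality, let $\sim$ be any compatible and allegorical equivalence relation on ${\sf Span}(\C)$, and suppose $(f,g)\approx(p,q)$, witnessed by 2-cells $u:(f,g)\to(p,q)$ and $v:(p,q)\to(f,g)$. Lemma \ref{meetidempotent} applied to $u$ yields $(f,g)\wedge(p,q)\sim(f,g)$, and applied to $v$ (together with strict commutativity of $\wedge$ on isomorphism classes of spans) yields $(p,q)\wedge(f,g)=(f,g)\wedge(p,q)\sim(p,q)$. By transitivity, $(f,g)\sim(p,q)$, so $\approx\,\subseteq\,\sim$.

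The consequence is then formal: because $\approx$ is contained in $\sim$, the assignment $[f,g]_{\approx}\mapsto[f,g]_{\sim}$ is a well-defined function on hom-sets that is identical on objects and commutes with composition, since composition of equivalence classes on both sides is computed from representative spans; it preserves $(-)^{\circ}$ and $\wedge$ for the same reason, the defining formulas being the same. I don't expect a significant obstacle here; the one spot requiring a moment's care is the symmetric use of Lemma \ref{meetidempotent} in the minimality argument, where one must invoke strict commutativity of $\wedge$ at the level of $\sf{Span}(\C)$-isomorphism classes.
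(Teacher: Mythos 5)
Your proposal is correct and follows essentially the same route as the paper: a routine verification that $\approx$ is a compatible and allegorical equivalence relation, followed by a two-sided application of Lemma \ref{meetidempotent} (using the strict commutativity of $\wedge$ on isomorphism classes) to obtain minimality, with the projection functor then being formal. The paper merely leaves the routine verifications implicit where you spell them out.
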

\begin{proof}
One routinely shows that, when defining the relation $\approx$ as described, one obtains a compatible and allegorical equivalence relation. Considering any compatible and allegorical equivalence relation $\sim$, with Lemma \ref{meetidempotent} one obtains immediately the implication
$$(f,g)\approx(p,q)\Longrightarrow(f,g)\sim(p,q),$$
which confirms the first claim. The second claim is an obvious consequence of the first one.
\end{proof}	
\section{Allegories arising from stable systems}
As an easy exercise one shows that for every stable system $\E$ in $\C$ the equivalence relation $\sim_{\E}$ of Definition \ref{Fequivalence}(1) is compatible. The next step is to consider the question under which conditions $\sim_{\E}$ will be allegorical. To this end, we first give a simplified description of the relation $\sim_{\E}$, as follows. %The following lemma gives an easy criterion for that. % for the equivalence relation $\sim_{\F}$.

\begin{lemma}\label{st-sy}
	For a stable system $\E$ in $\C$, one has $(f,g)\sim_\E (h,k)$ if, and only if, there are $x,y\in\E$ such that the following diagram commutes.
\end{lemma}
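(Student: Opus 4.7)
My plan is to characterize $\sim_\E$ via a single ``lower'' common refinement, which I expect the unshown diagram to depict: an object $D'$ together with morphisms $x\colon D'\to D$ and $y\colon D'\to E$ (where $D,E$ are the apexes of $(f,g)$ and $(h,k)$) both in $\E$ and satisfying $fx=hy$ and $gx=ky$. Writing $R$ for the relation ``such $x,y$ exist,'' I would prove $R=\sim_\E$ by a two-sided argument, rather than by direct induction on zigzag length.

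The containment $R\subseteq\sim_\E$ is immediate: given such $x,y$, the span $(fx,gx)=(hy,ky)$ satisfies $x\colon(fx,gx)\to(f,g)$ and $y\colon(hy,ky)\to(h,k)$ in $\mathcal{S}\text{pan}(\C)$, whence $(f,g)\geq_\E(fx,gx)\leq_\E(h,k)$, a zigzag of length two.

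For $\sim_\E\subseteq R$, since $\sim_\E$ is the least equivalence relation containing $\leq_\E$, it suffices to verify that $R$ is itself an equivalence relation containing $\leq_\E$. Reflexivity uses the fact that $\E$ contains identities (take $x=y=1_D$), and symmetry is immediate by swapping the roles of $x$ and $y$. Containment of $\leq_\E$ is likewise trivial: if $u\colon D\to E$ in $\E$ witnesses $(f,g)\leq_\E(h,k)$, set $x=1_D$ and $y=u$. The only substantive step is transitivity, which is where pullback-stability of $\E$ enters decisively. Given $(f,g)\mathrel{R}(h,k)$ via morphisms $(x_1,y_1)$ with apex $D_1$, and $(h,k)\mathrel{R}(p,q)$ via morphisms $(x_2,y_2)$ with apex $D_2$, both $y_1$ and $x_2$ have codomain $E$, so I would form the pullback $Q$ of $y_1$ along $x_2$ with projections $a\colon Q\to D_1$ and $b\colon Q\to D_2$. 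Pullback-stability forces $a,b\in\E$, and composition-closure then gives $x_1a,\,y_2b\in\E$. The chain of identities $f(x_1a)=h(y_1a)=h(x_2b)=p(y_2b)$, and the analogous one on the right legs, supplies the required $R$-witness between $(f,g)$ and $(p,q)$.

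The main obstacle in this approach is precisely the transitivity step, since this is the only place that genuinely needs pullback-stability of $\E$; the other two defining properties of a stable system (closure under composition, containment of isomorphisms) are consumed in the comparatively trivial reflexivity and containment clauses. Once the pullback square is set up correctly, the verification is routine bookkeeping on the four legs of the two given spans.
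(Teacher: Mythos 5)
Your proof is correct and matches the paper's argument in its essential step: both come down to pulling back two $\E$-morphisms with a common codomain and using pullback-stability plus composition-closure to produce a single lower wedge. The paper packages this as the observation that any upper wedge $(f,g)\leq_{\E}(p,q)\geq_{\E}(h,k)$ may be replaced by a lower one (so that zigzags collapse), whereas you package the identical computation as transitivity of the lower-wedge relation combined with minimality of $\sim_{\E}$.
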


\begin{equation}\label{exist xy}
\xymatrix{&D\ar[ld]_{f}\ar[rd]^{g}&\\ A &\cdot\ar[u]_{x}\ar[d]^{y}& B \\ & E \ar[lu]^{h}\ar[ru]_{k}&}		
\end{equation}

\begin{proof}
The sufficiency of the condition is trivial. Its necessity follows from the fact that every sequence $(f,g)\leq_{\E}(p,q)\geq_{\E}(h,k)$ may be replaced by a sequence $(f,g)\geq_{\E}(p',q')\leq_{\E}(h,k)$. Indeed, the first sequence yields the commutative diagram
\begin{equation}
\xymatrix{&D\ar[ld]_{f}\ar[rd]^{g}\ar[d]^z &\\ A &F\ar[l]_{p}\ar[r]^{q}& B \\ & E \ar[lu]^{h}\ar[ru]_{k}\ar[u]_{z'}&}		
\end{equation}
with $z,z'\in\E$. Then, by setting $(x, y)={\rm pb}(z, z')$, we have $x,y\in\E$ and, %diagram \ref{exist xy}. 
with 
$p'=fx$ and $q'=gx$, obtain a sequence of the second type. %, with 
%$p'=fx$ and $q'=gx$.
%The converse is obvious.	
\end{proof}	

Calling a split epimorphism (= retraction) $r$ in $\C$ {\em effective} if there exists a morphism $r'$ such that $ (r,r')$ is a kernel pair in $\C$, we are ready to state a criterion that answers the question we stated above.
\begin{theorem}\label{sim_F-allegory}
For a stable system $\E$ in $\C$, the equivalence relation $\sim_\E$  is allegorical if, and only if, for every effective retraction $r$, there is a morphism $z\in \E$ such that $rz\in\E$.
\end{theorem}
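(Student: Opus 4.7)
My plan is to unfold the allegorical condition $(1_A,f)\sim_\E(f,f)\circ(1_A,f)$ into a single concrete compatibility statement via Lemma \ref{st-sy}, after which the equivalence with the effective-retraction condition is essentially a matter of reading off definitions.

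First, I would compute the relevant horizontal composite. For $f:A\to B$, composing $(f,f):B\to B$ with $(1_A,f):A\to B$ requires a pullback of $f$ along itself; letting $(k_1,k_2):K\to A$ denote the kernel pair of $f$, one obtains $(f,f)\circ(1_A,f)=(k_1,fk_2)=(k_1,fk_1)$. Hence $\sim_\E$ is allegorical exactly when $(1_A,f)\sim_\E (k_1,fk_1)$ holds for every $f$. Applying Lemma \ref{st-sy} to this pair of spans, the condition becomes the existence of an object $P$ together with $x:P\to A$ and $y:P\to K$, both in $\E$, such that $x=k_1 y$ and $fx=fk_1 y$; the second equation is automatic from the first. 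Thus the allegorical property reduces to the clean statement: \emph{for every $f:A\to B$, there exists $y\in\E$ with $k_1 y\in\E$}, where $k_1$ is the first kernel-pair projection of $f$.

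The crucial observation is that $k_1$ is itself an effective retraction: the diagonal $d:A\to K$ determined by $k_1 d=k_2 d=1_A$ splits it, and $(k_1,k_2)$ is a kernel pair by construction. Necessity is then immediate: if $\sim_\E$ is allegorical and $r$ is any effective retraction with $(r,r')$ the kernel pair of some morphism $f$, then applying the reformulated condition to $f$ yields $y\in\E$ with $ry\in\E$, so $z:=y$ does the job.

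For sufficiency, assume that every effective retraction admits such a $z$. Given any $f:A\to B$, apply the hypothesis to the effective retraction $k_1$ to obtain $z\in\E$ with $k_1 z\in\E$, and set $y=z,\;x=k_1 z$; this verifies the reformulated condition, so $\sim_\E$ is allegorical. The only step requiring any real work is the initial reduction through Lemma \ref{st-sy}, which collapses the general zigzag description of $\sim_\E$ into a single commutative triangle with both slanted arrows in $\E$; once this is in hand, the theorem essentially matches hypothesis to conclusion word for word, with the verification that $k_1$ itself is an effective retraction the only subtlety worth pausing on.
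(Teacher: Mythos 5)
Your argument is correct and follows essentially the same route as the paper's: compute $(f,f)\circ(1_A,f)$ as the span given by the kernel pair of $f$, collapse the $\sim_\E$-condition to a single commuting triangle via Lemma \ref{st-sy}, and match the resulting condition against the effective-retraction hypothesis in both directions. The only cosmetic difference is that you isolate the reformulated condition as an intermediate statement before splitting into the two implications, which the paper does implicitly.
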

\begin{proof}
Let $\sim_\E$ be allegorical  and $r$ be an effective retraction; then there exist morphisms $r', f$ such that $(r, r')$  is a kernel pair of $f$. Since by hypothesis 
 $(1, f) \sim_{\E} (f, f)\circ(1, f)=(r, fr')$, by Lemma \ref{st-sy} there are morphisms $ z, z' \in \E$ such that  $z'=rz$ and $fz'=fr'z$. In particular, $rz\in \E$, as desired.	
	
Conversely, given any morphism $f:A\to B$ in $\C$ with kernel pair $(r,r')$, the split epimorphism $r$ is effective. Then, by hypothesis,  %considering an effective retraction $r$, so that $r$ belongs to the kernel pair $(r,r')$ of a morphism $f$, then 
there exists a morphism $z\in \E$ with $r z \in \E$. Looking at the trivially commuting diagram 
\begin{equation}
 \xymatrix{&&A\ar[lld]_{1}\ar[rrd]^{f}&&\\  A&&\cdot\ar[u]^{r z}\ar[d]_{z}&&B  \\ && K \ar[llu]^{r}\ar[rru]_{fr'}&&}		
\end{equation}
we conclude  $(1, f) \sim_{\E} (f, f)\circ(1, f)$. This means that $\sim_{\E}$ is allegorical. 
\end{proof}

\begin{corollary}\label{F contains Ret}
	For a stable system $\E$ in a category $\C$ containing all split epimorphisms, %$\F\supseteq \sf{Ret}(\C)$,  
	the equivalence relation $\sim_\E$ is allegorical, making the category
	${\sf Span}_{\E}(\C)$ an allegory.
\end{corollary}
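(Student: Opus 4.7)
The plan is to derive this as an immediate corollary of Theorem~\ref{sim_F-allegory} combined with Theorem~\ref{Spanallegory}. Since the text notes that $\sim_{\E}$ is compatible for every stable system $\E$ (inv-, comp-, and meet-compatibility are routine, with comp-compatibility already established in Proposition~\ref{basic lemma}), and since Theorem~\ref{Spanallegory} reduces the allegory property of ${\sf Span}_{\sim}(\C)$ to the allegoricity of $\sim$, the entire task is to verify the condition from Theorem~\ref{sim_F-allegory}: for every effective retraction $r$ in $\C$, there exists $z\in\E$ such that $rz\in\E$.

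The verification is essentially immediate under the present hypothesis. Let $r$ be any effective retraction. In particular, $r$ is a split epimorphism, so by hypothesis $r\in\E$. Taking $z$ to be the identity morphism on the codomain of $r$, we have $z\in\E$ (since stable systems contain all isomorphisms) and $rz=r\in\E$. Thus the criterion of Theorem~\ref{sim_F-allegory} is satisfied, so $\sim_{\E}$ is allegorical.

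Putting the pieces together: $\sim_{\E}$ is a compatible and allegorical equivalence relation on ${\sf Span}(\C)$, so Theorem~\ref{Spanallegory} guarantees that ${\sf Span}_{\E}(\C)={\sf Span}(\C)/\sim_{\E}$, equipped with the involution $(-)^{\circ}$ and meet $\wedge$ inherited from ${\sf Span}(\C)$ via Proposition~\ref{almostallegory}, is an allegory.

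Since the argument is just a two-line reduction, there is no real obstacle; the whole conceptual content has already been absorbed into Theorem~\ref{sim_F-allegory}. The only point worth stating explicitly in the write-up is the observation that an effective retraction is in particular a split epimorphism, hence lies in $\E$, so the witness $z$ can simply be chosen to be an identity.
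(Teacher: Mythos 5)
Your proof is correct and follows exactly the paper's own argument: take $z=1$, note that an effective retraction is a split epimorphism and hence lies in $\E$ by hypothesis, and then invoke Theorem \ref{sim_F-allegory} followed by Theorem \ref{Spanallegory}. No differences worth noting.
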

 \begin{proof}
 Under the condition $\mathrm{SplitEpi}(\C)\subseteq \E$,
	%Since $\sf{Ret}(\C)\subseteq \F$ and  $\F$ contains identities (isomorphisms),  
	for every effective retraction $r$ one has $1\in\E$ and $r1=r\in\E$, so that $\sim_\E$ is allegorical by Theorem \ref{sim_F-allegory}, making the quotient category an allegory, by Theorem \ref{Spanallegory}.
\end{proof}

 %The following example shows that the converse of Proposition \ref{F contains Ret} does not hold.

 \begin{remark}
 The converse statement of Corollary \ref{F contains Ret} manifestly fails. In fact, in any category $\C$ with pullbacks that contains some split epimorphism that is not an isomorphism, one can find a stable system $\E$ not containing all split epimorphisms, for which $\sim_{\E}$ is nevertheless allegorical: just consider the class $\E$ of all {\em mono}morphisms in $\C$\,! Indeed, $\E$ is trivially a stable system that, by hypothesis, cannot contain all split epimorphism; but since any section $s$
	% Let $\C$ be a category with a retraction that is not an isomorphism and $\M=\sf{Mon}(\C)$ be the collection of all the monomorphisms of $\C$. Obviously $\M$ is a stable system. Also,  
	of a given retraction $r$ trivially belongs to $\E$, as well as $rs=1$, the relation $\sim_\E$ is allegorical.
	%, but $\M\not\supseteq \sf Ret(\C)$. Hence ${\sf Span}_{\M}(\C)$ is an allegory, while $\M\not\supseteq \sf Ret(\C)$.
\end{remark}
However, the condition of Corollary \ref{F contains Ret} does become necessary for $\sim_\E$ to be allegorical whenever $\E$ belongs to a stable factorization system $(\E,\M)$ in $\C$, as we show in the next theorem. For that it is useful to recall a well-known equivalent formulation of the condition that all retractions must lie in $\E$; we provide a proof of the equivalence since resorting to the standard source for it (the dual of Proposition 14.11  in \cite{ada-her-str}) would ask us to unnecessarily assume the existence of binary coproducts in $\C$.
\begin{lemma}\label{splitepilemma} 
For any factorization system $(\E,\M)$ in a category $\D$ with kernel pairs, one has the equivalence	
$$\mathrm{SplitEpi}(\D)\subseteq \E\iff \M\subseteq \mathrm{Mono}(\D).    $$
\end{lemma}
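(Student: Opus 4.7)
The plan is to prove both implications by combining kernel-pair constructions with the orthogonality (diagonal fill-in) property of the factorization system and the standard closure of $\E$ and $\M$ under composition with isomorphisms. Both directions turn on the observation that for any morphism $m$ with kernel pair $(p_1,p_2)$, the diagonal $\Delta$ with $p_i\Delta=1$ makes $p_1$ and $p_2$ canonical split epimorphisms, so the interaction between split epis and the class $\E$ is really the interaction between kernel pairs and $\E$.

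For the implication ``$\M\subseteq\mathrm{Mono}(\D)\Rightarrow\mathrm{SplitEpi}(\D)\subseteq\E$,'' I would take a split epimorphism $r:A\to B$ with section $s$, factor $r=me$ with $e\in\E$, $m\in\M$, and note that $m(es)=rs=1_B$ exhibits $m$ as a split epimorphism. Since by hypothesis $m$ is a monomorphism, the equation $m(es)m=m=m\cdot 1_A$ gives $(es)m=1_A$; thus $m$ is an isomorphism, and $r=me$ lies in $\E$ because $\E$ is closed under composition with isomorphisms. This avoids any reference to coproducts.

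For the reverse implication, I would take $m\in\M$, form its kernel pair $(p_1,p_2):K\to A$, and let $\Delta:A\to K$ be the diagonal with $p_1\Delta=p_2\Delta=1_A$. By hypothesis $p_1,p_2\in\mathrm{SplitEpi}(\D)\subseteq\E$. The defining identity $mp_1=mp_2$ is a commutative square
\begin{equation*}
\xymatrix{K\ar[r]^{p_2}\ar[d]_{p_1} & A\ar[d]^{m}\\ A\ar[r]_{m} & B}
\end{equation*}
with $p_1\in\E$ on the left and $m\in\M$ on the right. Orthogonality then produces a (unique) diagonal $d:A\to A$ with $dp_1=p_2$ and $md=m$. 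Precomposing $dp_1=p_2$ with $\Delta$ and using $p_1\Delta=p_2\Delta=1_A$ gives $d=1_A$, whence $p_1=p_2$, i.e.\ $m$ is a monomorphism. The only mildly subtle point, and the one I would identify as the main obstacle, is the recognition that the kernel-pair square is itself the square to which the diagonal fill-in of $(\E,\M)$ should be applied; once this is seen the argument collapses to the single line $d=dp_1\Delta=p_2\Delta=1_A$.
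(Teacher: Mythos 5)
Your proof is correct. The direction $\M\subseteq\mathrm{Mono}(\D)\Rightarrow\mathrm{SplitEpi}(\D)\subseteq\E$ is essentially identical to the paper's: factor the retraction $r=me$, observe that $m$ is a split epimorphism and a monomorphism, hence an isomorphism, and conclude $r\in\E$. The other direction, however, takes a genuinely different route. The paper forms the kernel pair $(m_1,m_2)$ of $m\in\M$ and argues that $m_1$ lies in $\M$ (as a pullback of $m$, using the standard fact that the right class of an orthogonal factorization system is closed under pullback) while also lying in $\E$ (as a retraction, by hypothesis), so $m_1\in\E\cap\M=\mathrm{Iso}(\D)$ and $m$ is monic. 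You instead apply the orthogonality of $\E$ against $\M$ directly to the kernel-pair square $mp_1=mp_2$, obtaining a diagonal $d$ with $dp_1=p_2$ and $md=m$, and then collapse it via the diagonal section: $d=dp_1\Delta=p_2\Delta=1_A$, whence $p_1=p_2$ and $m$ is monic. Both arguments are valid and both avoid coproducts, which is the point of the paper's remark preceding the lemma. The paper's version buys a slightly shorter argument by leaning on two standard closure properties ($\M$ closed under pullback, $\E\cap\M=\mathrm{Iso}$), while yours uses only the lifting property itself and so is marginally more self-contained; the trade-off is negligible, and the central idea --- that the kernel pair of $m$ together with its diagonal section is where split epimorphisms meet $\E$ --- is the same in both.
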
 
\begin{proof}
$\Longrightarrow$: For the kernel pair $(m_1,m_2)$ of a morphism $m\in\M$, the projection $m_1$ is in $\M$ as a pullback of $m$, but as a retraction also in $\E$, by hypothesis. Consequently, $m_1$ is an isomorphism, which makes $m$ monic.

$\Longleftarrow$: For any retraction $r$, considering its factorization %\in\sf{Ret}(\C)$, let 
$r=me$ with $m\in\M$ and $e\in\E$, we see that also $m$ is a retraction, in addition to being monic by hypothesis. Hence, the morphism $m$, now shown to be an isomorphism, makes $r=me$ lie in $\E$.
\end{proof}

   %Now we are ready to recognize the conditions under which ${\sf Span}_{\E}(\C)$ is an allegory , when $\C$ is a finitely complete category with a stable factorization structure $(\E,\M)$, and then  we characterize the  category $\sf{Rel}_\M(\C)$.
 \begin{theorem}\label{nec suf con rel be all}
For every stable factorization system $(\E,\M)$ of $\C$, the relation   %the following statements are equivalent.
%\begin{itemize}
	$\sim_{\E}$ is allegorical if, and only if, every morphism in $\M$ is a monomorphism in $\C$; equivalently, if every retraction in $\C$ lies in $\E$.
%	\item[2)] $\sf{Ret}(\C)\subseteq \E$.
	%\item[3)] $\M \subseteq \sf{Mono}(\C)$
%\end{itemize}
\end{theorem}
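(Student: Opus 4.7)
The statement's two conditions are equivalent by Lemma \ref{splitepilemma}, so I would aim to prove that $\sim_{\E}$ is allegorical iff every retraction in $\C$ lies in $\E$. The sufficiency direction is already furnished by Corollary \ref{F contains Ret}. For necessity, my plan is to transit through Theorem \ref{sim_F-allegory}, which recasts the allegorical hypothesis as: for every effective retraction $r$ there exists $z \in \E$ with $rz \in \E$. The strategic move is to apply this criterion to an effective retraction that simultaneously lies in $\M$; the factorization-system machinery will then force such a retraction to be an isomorphism, and consequently the chosen $\M$-morphism will be monic.

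Concretely, take any $m : X \to Y$ in $\M$ and form its kernel pair $(m_1, m_2) : P \to X$. Then $m_1$ is an effective retraction, split by the diagonal $\Delta : X \to P$ (which satisfies $m_1 \Delta = m_2 \Delta = 1_X$), and as a pullback of $m$ it lies in $\M$. Theorem \ref{sim_F-allegory} supplies $z \in \E$ with $m_1 z \in \E$. I would then confront the two $(\E,\M)$-factorizations $m_1 z = 1_X \circ (m_1 z) = m_1 \circ z$: both have their $\E$-part in $\E$, but their $\M$-parts are $1_X$ and $m_1$, respectively. Essential uniqueness of $(\E,\M)$-factorizations---a standard consequence of the diagonal fill-in---produces an isomorphism $\phi$ with $m_1 \phi = 1_X$, forcing $m_1$ to be an isomorphism. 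Combining this with $m_1 \Delta = 1_X$ identifies $\Delta$ with $m_1^{-1}$, so that $m_2 \Delta = 1_X$ yields $m_2 = m_1$. The equality $m_1 = m_2$, together with the universal property of the kernel pair, immediately makes $m$ a monomorphism.

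The only real subtlety lies in the uniqueness step for factorizations: one must invoke the diagonal fill-in property to compare $m_1 z = 1_X \circ (m_1 z)$ with $m_1 z = m_1 \circ z$ and extract that the $\M$-part $m_1$ must be an isomorphism. Apart from that, the argument is a clean application of the allegorical criterion of Theorem \ref{sim_F-allegory} to the specific effective retraction produced by the kernel pair of an $\M$-morphism, exploiting stability under pullback to keep $m_1$ inside $\M$ and thereby creating the decisive $\E$-versus-$\M$ clash.
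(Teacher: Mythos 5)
Your proof is correct and follows essentially the same route as the paper: both apply the allegorical condition to the kernel pair $(m_1,m_2)$ of $m\in\M$, note that $m_1\in\M$ by pullback-stability, and exploit the resulting $\E$-versus-$\M$ clash to make $m_1$ an isomorphism (the paper extracts $x,y\in\E$ with $x=m_1y$ directly via Lemma \ref{st-sy} and uses the cancellation property of $\E$, while you route through Theorem \ref{sim_F-allegory} and essential uniqueness of factorizations---interchangeable standard arguments). The conclusion that $m$ is monic is then the same.
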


\begin{proof}	
%$(1\Longrightarrow 2)$ 
By Corollary \ref{F contains Ret} and Lemma \ref{splitepilemma}, only the necessity of the condition $\M\subseteq \mathrm{Mono}(\C)$ still needs to be proved. %, that is: we must show that every retraction $r$ lies in $\E$. Indeed, %So, let $m\in\M$ %But for any retraction $r$,
%considering its factorization %\in\sf{Ret}(\C)$, let 
%$r=me$ with $m\in\M$ and $e\in\E$, we see that also $m$ is a retraction. S
But when $\sim_{\E}$  is allegorical, for every $m\in\M$ one has  $(1,m)\sim_\E(m,m)\circ(1,m)$, %and $(m,1)\sim_E(m,1)\circ(m,m)$, 
so that with Lemma \ref{st-sy} one obtains morphisms $x,y$ in $\E$
%there are morphisms $z, z', w, w'$ 
making the diagram
\begin{equation}
\xymatrix{&&\cdot\ar[lld]_{1}\ar[rrd]^{m}&&\\  \cdot &&\cdot\ar[u]^{x}\ar[d]_{y}&&\cdot  \\ &&\cdot  \ar[llu]^{m_1}\ar[rru]_{mm_2}&& }%\qquad
%\xymatrix{&&\cdot\ar[lld]_{m}\ar[rrd]^{1}&&\\ \cdot &&\cdot\ar[u]^{x'}\ar[d]_{y'}&& \cdot \\ && \cdot \ar[llu]^{mm_1}\ar[rru]_{m_2}&&}		
\end{equation}
commute, where  $(m_1,m_2)$ is a kernel pair for $m$. Since both, $m_1 y=x$ and $y$ %\;m_2 y'=x',\;y'$, 
lie in $\E$,  also $m_1$ does. But %as pullback of $m$ these morphisms
$m_1$ already belongs to $\M$ as a pullback of $m$ and is therefore an isomorphism.  %$m_1 , m_2 \in \E \cap \M = Iso$; thus 
%Consequently, the retraction $m$ is a monomorphism and, hence, an isomorphism, and 
Consequently, $m$ is a monomorphism.\\
 %$(2\Longleftrightarrow 3)$ see \cite{ada-her-str}.\\
% $(2\Longrightarrow 1)$ It follows from the fact that, for every morphism $f$ with a kernel pair $(f_1, f_2)$,  the morphisms $f_1, f_2$ are retractions. 
\end{proof}
%By the dual of the Proposition 14.11 (\cite{ada-her-str}), for a factorization system $(\E,\M)$ in a category with binary coproducts, the class $\E$ contains all split epimorphisms if, and only if, $\M$ is a class of monomorphisms. In our context, we want to avoid assuming the existence of coproducts. In fact, it should be known that the existing kernel pairs in $\C$ suffice to prove the same correlation between $\E$ and $\M$. We include the short proof here: %$\sf{Ret}(\C)\subseteq \E$ if and only if
%$\M \subseteq \sf{Mono}(\C)$. 

So, with Theorems \ref{RelasquotientofSpan} and \ref{nec suf con rel be all} we obtain:

\begin{corollary}\label{alleg iff cotains ret}
For a category  $\C$  with a stable factorization system $(\E,\M)$, the category  	${\sf Span}_{\E}(\C)\cong\sf{Rel}_\M(\C)$ is an allegory if, and only if, $ \mathrm{SplitEpi}(\C)\subseteq\E  $; equivalently, if $\M \subseteq \mathrm{Mono}(\C)$.
	\end{corollary}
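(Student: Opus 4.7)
The plan is to observe that this corollary is essentially a chaining together of the main theorems already established in the paper, so the proof amounts to stringing the right implications in the right order.

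First I would invoke Theorem \ref{RelasquotientofSpan} to reduce the question about ${\sf Rel}_\M(\C)$ to the isomorphic category ${\sf Span}_\E(\C)$: since the categorical structure is preserved under isomorphism, ${\sf Rel}_\M(\C)$ is an allegory if and only if ${\sf Span}_\E(\C)$ is. Next, I would apply Theorem \ref{Spanallegory}, using the fact (noted at the beginning of Section 4, and established in Proposition \ref{basic lemma} together with a routine check that $\sim_\E$ is also inv- and meet-compatible) that $\sim_\E$ is a compatible equivalence relation on ${\sf Span}(\C)$. Theorem \ref{Spanallegory} then says that ${\sf Span}_\E(\C) = {\sf Span}_{\sim_\E}(\C)$ is an allegory precisely when $\sim_\E$ is allegorical.

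From here the result falls out of Theorem \ref{nec suf con rel be all}, which characterizes $\sim_\E$ being allegorical (for $\E$ belonging to a stable factorization system) as the condition $\M\subseteq\mathrm{Mono}(\C)$, equivalently, as the condition that every retraction lies in $\E$. Finally, to pin down the logical equivalence $\mathrm{SplitEpi}(\C)\subseteq \E \iff \M\subseteq \mathrm{Mono}(\C)$ in the form stated in the corollary, I would cite Lemma \ref{splitepilemma}.

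Because every component is already proved, there is no real obstacle — the only care needed is to make the chain of ``if and only if'' statements explicit, and to remind the reader that the hypotheses needed for each invoked result (pullbacks and binary products in $\C$, stability and composition-closedness of $\E$, $\E$ being part of a factorization system) are all supplied by the ambient assumption that $(\E,\M)$ is a stable factorization system on the finitely complete category $\C$. A clean write-up should therefore consist of a single short paragraph of the form: ``By Theorem \ref{RelasquotientofSpan} and Theorem \ref{Spanallegory}, ${\sf Rel}_\M(\C)$ is an allegory iff $\sim_\E$ is allegorical; by Theorem \ref{nec suf con rel be all} this holds iff $\M\subseteq\mathrm{Mono}(\C)$; and by Lemma \ref{splitepilemma} this in turn is equivalent to $\mathrm{SplitEpi}(\C)\subseteq\E$.''
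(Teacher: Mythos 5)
Your proposal is correct and follows essentially the same route as the paper, which derives the corollary directly from Theorems \ref{RelasquotientofSpan} and \ref{nec suf con rel be all} (with Theorem \ref{Spanallegory} and Lemma \ref{splitepilemma} implicitly supplying the links you make explicit). Your write-up merely spells out the chain of equivalences that the paper leaves as a one-line citation.
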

%Now,  Theorem \ref{RelasquotientofSpan} implies that:
%\begin{theorem}\label{when Rel_M(C) is allegory}
%For a category $\C$ with a stable factorization structure $(\E,\M)$,   the category $\sf{Rel}_\M(\C)$ is an allegory if and only if $\mathrm{Rel}(\C)\subseteq\E $ if and only if $\M \subseteq \sf{Mono}(\C)$.
%\end{theorem}

 %\begin{proof}
%One can easily verify that the involution and meet are well-defined on $\sf{Span}_{\E}(\C,\C)$. So by
%Corollary  \ref{alleg iff cotains ret}, $\sf{Span}_{\E}(\C)$ is an allegory if and only if $\E\supseteq\sf{Ret}(\C)$. The result follows by Theorem \ref{relisospan}.
%\end{proof}
\begin{definition}\label{def E_o}
For every stable system $\E$ in $\C$, we denote by $\E_{\circ}$ the least stable system that contains both $\E$ and $\mathrm{SplitEpi}(\C)$. Since these classes are both stable under pullback, $\E_{\circ}$ is just the closure of the class $\E\cup\mathrm{SplitEpi}(\C)$ under composition in $\C$:
$$\E_{\circ}=(\E\cup\mathrm{SplitEpi}(\C))^{\mathrm c}.$$
\end{definition}

With Corollary \ref{F contains Ret}, Lemma \ref{splitepilemma}  and Theorem  \ref{RelasquotientofSpan} we conclude:

\begin{corollary} 
For every stable system $\E$ in $\C$,
the category $\mathsf{Span}_{\E_{\circ}}(\C)$ is an allegory, and when $\E$ belongs to a stable factorization system $(\E,\M)$ of $\C$, it may be presented as a quotient category of $\mathsf{Rel}_\M(\C)$, via
%In a category with a stable factorization system $(\E,\M)$, there is a smallest stable system $\F$ containing $\E$ such that $\sf{Span}_{\F}(\C)$ is an allegory.
 %Setting $\E_{\circ}=(\E\cup\sf{Ret}(\C))^{c}$ (the smallest composition closed class containing $\E\cup\sf{Ret}(\C)$), it easily follows that $\E_{\circ}$ is a stable system. So by Proposition \ref{F contains Ret}, the category $\sf{Span}_{\E_{\circ}}(\C)$ is an allegory. Furthermore we have a functor
$$\sf{Rel}_\M(\C)\longrightarrow \sf{Span}_{\E_{\circ}}(\C),\;
(f,g)\longmapsto[f,g]_{\sim_{\E_{\circ}}}.$$
 When $\M$ is a class of monomorphisms, then $\E_{\circ}=\E$, and this functor is an isomorphism.
\end{corollary}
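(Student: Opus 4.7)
The plan breaks naturally into three steps matching the three claims of the corollary, and essentially everything will be pulled together from results already in hand.

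First I would verify that $\E_{\circ}$ really is a stable system, since the allegory conclusion will be obtained by applying Corollary \ref{F contains Ret}. Containment of isomorphisms holds because $\E$ already contains them, and closure under composition is built into the definition $\E_{\circ}=(\E\cup\mathrm{SplitEpi}(\C))^{\mathrm c}$. The only step needing a brief argument is pullback-stability of the composition closure: given a composite $g_n\cdots g_1$ of morphisms from $\E\cup\mathrm{SplitEpi}(\C)$ and a pullback along some arrow, one pastes pullback squares in the standard way so that the pulled-back composite is again a composite of pullbacks of the $g_i$, each of which lies in $\E\cup\mathrm{SplitEpi}(\C)$ by the pullback-stability of both $\E$ and $\mathrm{SplitEpi}(\C)$. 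Hence $\E_{\circ}$ is a stable system, and since $\mathrm{SplitEpi}(\C)\subseteq\E_{\circ}$ by construction, Corollary \ref{F contains Ret} yields that $\mathsf{Span}_{\E_{\circ}}(\C)$ is an allegory.

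Next, for the second claim I would exploit the fact that $\E\subseteq\E_{\circ}$ implies $\leq_{\E}\,\subseteq\,\leq_{\E_{\circ}}$ on spans, and therefore $\sim_{\E}\,\subseteq\,\sim_{\E_{\circ}}$. Hence the quotient projection $\mathsf{Span}(\C)\to\mathsf{Span}_{\E_{\circ}}(\C)$ factors through $\mathsf{Span}_{\E}(\C)$, giving a full functor $\mathsf{Span}_{\E}(\C)\to\mathsf{Span}_{\E_{\circ}}(\C)$ that is identical on objects and surjective on morphisms. Precomposing with the inverse of the isomorphism $\mathsf{Rel}_{\M}(\C)\cong\mathsf{Span}_{\E}(\C)$ from Theorem \ref{RelasquotientofSpan} produces precisely the asserted functor $(f,g)\mapsto[f,g]_{\sim_{\E_{\circ}}}$, which is a quotient presentation because it is bijective on objects and surjective on morphisms.

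Finally, for the last sentence, I would invoke Lemma \ref{splitepilemma}: under the hypothesis $\M\subseteq\mathrm{Mono}(\C)$ one has $\mathrm{SplitEpi}(\C)\subseteq\E$, so $\E\cup\mathrm{SplitEpi}(\C)=\E$, and since $\E$ is already closed under composition the composition closure satisfies $\E_{\circ}=\E$. Under this identification the functor displayed in the corollary collapses to the isomorphism of Theorem \ref{RelasquotientofSpan}.

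The one non-trivial piece of bookkeeping is the pullback-stability of the composition closure in step one; everything else is immediate from previously stated material. There is no real obstacle, since no new universal-property argument or modular-law manipulation is required beyond the stability check.
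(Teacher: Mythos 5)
Your proposal is correct and follows exactly the route the paper intends: the paper offers no written proof but simply cites Corollary \ref{F contains Ret}, Lemma \ref{splitepilemma}, and Theorem \ref{RelasquotientofSpan}, and your three steps (checking that $\E_{\circ}$ is a stable system containing $\mathrm{SplitEpi}(\C)$, factoring the projection through $\mathsf{Span}_{\E}(\C)\cong\mathsf{Rel}_{\M}(\C)$, and collapsing $\E_{\circ}=\E$ when $\M\subseteq\mathrm{Mono}(\C)$) are precisely the details those citations leave implicit. The pullback-stability of the composition closure that you single out is already asserted in the paper's Definition \ref{def E_o}, so nothing further is needed.
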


\section{Unitary tabular allegories arising from stable factorization systems }
In this section, for a compatible equivalence relation $\sim$ on $\C$, we explore the question when the quotient category ${\sf Span}_\sim (\C)$ is an allegory that is even unitary and tabular in the sense of \cite{fs}. We give a complete answer in the case when $\sim$ is induced by a stable system $\E$ in $\C$. %, but, we introduce a suitable stable system $\F$ for a  finitely complete category $\C$ with a stable factorization structure $(\E, \M)$, so that the category ${\sf Span}_\F(\C)$, is a unitary tabular allegory. 
Let us first recall that, in an allegory $\A$,
\begin{itemize}
\item a  morphism $r$   is called a {\em map} if $1 \leq r^{\circ}\cdot r$ and $r\cdot r^{\circ} \leq 1$; the maps in $\A$ are the morphisms of the subcategory $$\mathsf{Map}(\A)$$ of $\A$, which has   the same class of objects as $\A$;
\item a pair $(f,g)$ of maps {\em tabulates} a morphism $r$ if $r = g\cdot f^{\circ}$ and
$(f^{\circ}\cdot f) \wedge (g^{\circ}\cdot g) = 1$; in this case $r$ is called {\em tabular};  the allegory $\mathcal A$ is {\em tabular} if all of its morphisms are;
\item an object $E$ in $\mathcal A$ is a {\em partial unit} if $1_{E}$
is the maximum in $\mathcal A(E, E)$; if, in addition, for every object $A$ there is a morphism $r:A\to E$ %in $\mathcal A(B, A)$ 
such that $1_A\leq r^\circ \cdot r$, then $E$ is said to be a {\em unit}; $\A$ is {\em  unitary} if it has a unit.
\end{itemize}
%and that for an allegory $\A$, $Map(\A)$ denotes the subcategory of maps.
In order to establish some necessary conditions, we first consider a compatible and allegorical equivalence relation on $\C$ and start off by affirming that a morphism of $\C$ becomes a map once transferred to the $\sim$\,-\,equivalence class of its graph (Definition \ref{defallegoricalrelation}), by the functor
$$\Gamma_{\sim}:\C\longrightarrow \sf{Span}_{\sim}(\C),\quad f\longmapsto [1,f]_{\sim}.$$
%into $\sf{Span}_{\sim}(\C)$, as the $\sim$\,-\,equivalence class of their graph (Definition \ref{defallegoricalrelation}).

\begin{lemma}
	For every morphism $f:A\to B$ in $\C$, $[1_A,f]_{\sim}$ is a map in the allegory $\sf{Span}_{\sim}(\C)$.
\end{lemma}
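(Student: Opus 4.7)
The plan is to verify directly the two defining inequalities of a map, namely
$$1_A \leq [1_A,f]_{\sim}^{\circ}\cdot [1_A,f]_{\sim} \quad\text{and}\quad [1_A,f]_{\sim}\cdot [1_A,f]_{\sim}^{\circ}\leq 1_B,$$
by computing the two composites as spans and then exhibiting 2-cells of spans that trigger Lemma \ref{meetidempotent}. Recall that by the construction of the meet and Theorem \ref{Spanallegory}, the partial order on hom-sets of $\sf{Span}_{\sim}(\C)$ is given by $[r]\leq [s]\iff [r]\wedge[s]=[r]$, and Lemma \ref{meetidempotent} guarantees that a 2-cell $(p,q)\to(p',q')$ in $\S\text{pan}(\C)$ yields $(p,q)\wedge(p',q')\sim(p,q)$.

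For the first inequality I would compute
$[1_A,f]^{\circ}\cdot [1_A,f] = [f,1_A]\circ[1_A,f]$ via the pullback of $f:A\to B$ along itself; this pullback is the kernel pair $(k_1,k_2):K\to A$ of $f$, so the composite becomes $[k_1,k_2]$. Since $f\cdot 1_A = f\cdot 1_A$, the universal property of the kernel pair yields a diagonal $\delta:A\to K$ with $k_1\delta=k_2\delta=1_A$, which is precisely a 2-cell $(1_A,1_A)\to(k_1,k_2)$ in $\S\text{pan}(\C)$. Lemma \ref{meetidempotent} then gives $(1_A,1_A)\wedge(k_1,k_2)\sim(1_A,1_A)$, i.e.\ $1_A=[1_A,1_A]\leq[k_1,k_2]=[1_A,f]^{\circ}\cdot[1_A,f]$.

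For the second inequality I would compute
$[1_A,f]\cdot [1_A,f]^{\circ} = [1_A,f]\circ[f,1_A]$ via the pullback of $1_A$ against $1_A$, which is $A$ with both projections equal to $1_A$. Hence this composite is the span $(f,f):B\to B$. The morphism $f:A\to B$ itself satisfies $1_B\cdot f = f = 1_B\cdot f$ and therefore constitutes a 2-cell $(f,f)\to(1_B,1_B)$. Applying Lemma \ref{meetidempotent} once more, $(f,f)\wedge(1_B,1_B)\sim(f,f)$, so $[1_A,f]\cdot[1_A,f]^{\circ}=[f,f]\leq[1_B,1_B]=1_B$.

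The argument is essentially mechanical once the two composites are identified as the kernel pair span and the $(f,f)$ span respectively; there is no real obstacle, since the two required 2-cells — the diagonal into the kernel pair, and $f$ viewed as a map of spans over $B\times B$ — are canonical. The only subtlety worth noting is that invoking Lemma \ref{meetidempotent} presupposes that $\sim$ is inv- and comp-compatible and allegorical, which is ensured by the hypothesis that $\sf{Span}_{\sim}(\C)$ is an allegory (via Theorem \ref{Spanallegory}).
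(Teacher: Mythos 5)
Your proposal is correct and follows essentially the same route as the paper: both reduce the two map conditions to the identities $[f,f]\wedge[1_B,1_B]=[f,f]$ and $[k_1,k_2]\wedge[1_A,1_A]=[1_A,1_A]$ after computing the composites $[1_A,f]\circ[f,1_A]$ and $[f,1_A]\circ[1_A,f]$ as the span $(f,f)$ and the kernel-pair span, respectively. The only (harmless) difference is the final step: you derive the two meet identities from the canonical 2-cells via Lemma \ref{meetidempotent}, whereas the paper computes the meets directly as pullbacks in $\C$ (the relevant squares are pullbacks because $\langle 1_B,1_B\rangle$ and $\langle k_1,k_2\rangle$ are monic), obtaining the identities already in $\mathsf{Span}(\C)$ before passing to the quotient.
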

\begin{proof}
Writing $[-,-]$ for $[-,-]_{\sim}$, we need to establish the inequalities $$[1_A,f]\circ [f,1_A] \leq [1_B,1_B]\quad \text{ and }\quad [1_A,1_A]\leq [f,1_A]\circ [1_A,f].$$
The LHS of the first inequality trivially computes to $[f,f]$, so it suffices to show the identity $[f,f]=[f,f]\wedge[1_B,1_B]$; and the RHS of the second inequality equals $[f_1,f_2]$ with  $(f_1,f_2)$ a kernel pair of $f$, so it suffices to show the identity $[1_A,1_A]=[f_1,f_2]\wedge[1_A,1_A]$. But the validity of the two needed identities may be seen easily, since the relevant meets may be displayed by the following two square diagrams. Indeed, as the morphisms $\langle 1_B,1_B\rangle$ and $\langle f_1,f_2\rangle$ are monic, they are both pullback squares, with diagonals $\langle f, f\rangle$ and $\langle 1_A,1_A\rangle$ respectively.
%For showing $[1,1]_{\sim}\leq [f,1]_{\sim}\circ [1,f]_{\sim}$, we have
%	$[f,1]_{\sim}\circ[1,f]_{\sim}=[f_1,f_2]_{\sim}$, where $(f_1,f_2)$ is a kernel pair  with of $f$. There is a unique $\gamma$ such that $\langle f_1,f_2\rangle \gamma=\Delta_A$ and since $\langle f_1,f_2\rangle$ is monic, the following square is a pullback.
	\begin{center}
	$\xymatrix{A\ar[r]^{f}\ar[d]_{1_A} & B\ar[d]^{\langle 1_B,1_B\rangle}\\
			A\ar[r]_{\langle f,f\rangle \quad} & B\times B}$ \qquad	
		$\xymatrix{A\ar[r]^{1_A}\ar[d]_{\langle 1_A,1_A\rangle} & A\ar[d]^{\langle 1_A,1_A\rangle}\\
			A\times_BA\ar[r]_{\langle f_1,f_2\rangle } & A\times A}$
	\end{center}
	%So we get $[f_1,f_2]_{\sim}\wedge[1,1]_{\sim}=[1,1]_{\sim}$. Then $[1,1]_{\sim}\leq [f,1]_{\sim}\circ [1,f]_{\sim}$.
	%To show $[1,f]_{\sim}\circ [f,1]_{\sim} \leq [1,1]_{\sim}$, note that  $[1,f]_{\sim}\circ [f,1]_{\sim}=[f,f]_{\sim}$. Since $\Delta_B$ is monic and the following square commutes, it is a pullback square.
	%\begin{center}
%		$\xymatrix{A\ar[r]^{f}\ar[d]_{1} & B\ar[d]^{\Delta_B}\\
%			A\ar[r]_{\langle f,f\rangle } & B\times B}$
%	\end{center}
%	Hence $[f,f]_{\sim}\wedge[1,1]_{\sim}=[f,f]_{\sim}$. So $[1,f]_{\sim}\circ [f,1]_{\sim}\leq [1,1]_{\sim}$.
\end{proof}
By the lemma, we may restrict the codomain of the functor $\Gamma_{\sim}$ and consider it as a functor $\C\longrightarrow\mathsf{Map}(\mathsf{Span}_{\sim}(\C))$. Its behaviour on pullbacks is described in the following proposition which we will use in Section 6.

 \begin{proposition}\label{pb iff tab}
	Let $\sim$ be a compatible and allegorical equivalence relation on $\mathsf{Span}(\C)$. %the allegory $\mathsf{Span}_{\sim}(\C)$ be tabular. 
	Then $\Gamma_{\sim}$ maps a pullback square
	\begin{center}
		$\xymatrix{\cdot\ar[d]_{k'}\ar[r]^{h'} &\cdot\ar[d]^{k}\\ \cdot\ar[r]_{h}&\cdot}$
	\end{center}
	in $\C$ to a pullback square in $\mathsf{Map}(\mathsf{Span}_{\sim}(\C))$	
	%\begin{equation}\label{pb in map}
	%\xymatrix{\ar[d]_{[1,p]_{\sim}}\ar[rr]^{[1,q]_{\sim}}&&\ar[d]^{[1,k]_{\sim}}\\ \ar[rr]_{[1,h]_{\sim}}&&}
	%\end{equation}
	%is a pullback in $Map(\mathsf{Span}_{\sim}(\C))$ 
	if the pair $(\Gamma_{\sim}\,k',\Gamma_{\sim}\,h')$ tabulates the morphism $(\Gamma_{\sim}\,k)^{\circ}\circ(\Gamma_{\sim}\,h)$  
	%$[1,k]_{\sim}^{\circ}\circ [1,h]_{\sim}=[1,q]_{\sim}\circ [1, p]_{\sim}^\circ$ is a tabulation 
	in $\mathsf{Span}_{\sim}(\C)$. This condition is not only sufficient, but also necessary, provided that the morphism $(\Gamma_{\sim}\,k)^{\circ}\circ(\Gamma_{\sim}\,h)$ has any tabulation in
	$\mathsf{Span}_{\sim}(\C)$.
		\end{proposition}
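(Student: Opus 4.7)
The strategy is to reduce the claim to the standard universal property of tabulations in allegories: in any allegory $\mc A$, maps $f\colon A'\to C$, $g\colon A'\to B$ tabulate a morphism $r\colon C\to B$ if and only if, for every pair of maps $a\colon X\to C$, $b\colon X\to B$ with $ba^{\circ}\leq r$, there exists a unique map $u\colon X\to A'$ with $fu=a$ and $gu=b$ (cf.~\cite{fs}). By the preceding lemma, $\Gamma_{\sim}$ factors through $\mathsf{Map}(\mathsf{Span}_{\sim}(\C))$, so the image of the given commutative square is a commutative square of maps. A direct horizontal-composition calculation---using that the given square is a pullback in $\C$ and that composites along identity legs are trivial---shows
$$(\Gamma_{\sim} k)^{\circ}\circ\Gamma_{\sim} h \;=\; [k',h']_{\sim} \;=\; \Gamma_{\sim} h'\circ(\Gamma_{\sim} k')^{\circ},$$
so the first tabulation identity $gf^{\circ}=r$ holds automatically for $(f,g):=(\Gamma_{\sim} k',\Gamma_{\sim} h')$.

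For sufficiency, suppose $(\Gamma_{\sim} k',\Gamma_{\sim} h')$ tabulates $r:=(\Gamma_{\sim} k)^{\circ}\circ\Gamma_{\sim} h$, and let maps $a,b$ satisfy $\Gamma_{\sim} h\cdot a=\Gamma_{\sim} k\cdot b$. Multiplying on the left by $(\Gamma_{\sim} k)^{\circ}$ and using $(\Gamma_{\sim} k)^{\circ}\Gamma_{\sim} k\geq 1$ gives $b\leq ra$, and then $aa^{\circ}\leq 1$ (as $a$ is a map) yields $ba^{\circ}\leq r$. The tabulation universal property now supplies the unique map $u$ with $\Gamma_{\sim} k'\cdot u=a$ and $\Gamma_{\sim} h'\cdot u=b$, which is precisely the pullback universal property required in $\mathsf{Map}(\mathsf{Span}_{\sim}(\C))$.

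For necessity, assume the image is a pullback in $\mathsf{Map}$ and let $(f,g)$ be any tabulation of $r$. From $gf^{\circ}=r=k^{\circ}h$, the same kind of manipulation---combined with the well-known fact that an inequality between maps in an allegory forces equality---delivers $kg=hf$. The pullback property then supplies a unique map $u\colon A'\to A$ with $\Gamma_{\sim} k'u=f$ and $\Gamma_{\sim} h'u=g$, while the tabulation universal property supplies $v\colon A\to A'$ with $fv=\Gamma_{\sim} k'$ and $gv=\Gamma_{\sim} h'$. Uniqueness in the two universal properties (applied to the identity maps on $A$ and $A'$) forces $uv=1_A$ and $vu=1_{A'}$, so $u$ is an iso in $\mathsf{Map}$. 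The most delicate step is now the transport of the meet equation along $u$: invoking that $u^{\circ}=u^{-1}$ for a map-iso and that iso-conjugation preserves binary meets, the tabulation identity $f^{\circ}f\wedge g^{\circ}g=1_{A'}$ transports to $(\Gamma_{\sim} k')^{\circ}\Gamma_{\sim} k'\wedge(\Gamma_{\sim} h')^{\circ}\Gamma_{\sim} h'=1_A$, confirming that $(\Gamma_{\sim} k',\Gamma_{\sim} h')$ itself tabulates $r$ and completing the proof.
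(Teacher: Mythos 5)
Your proof is correct and follows essentially the same route as the paper's: both directions reduce to the standard correspondence between tabulations and pullbacks of maps (Freyd--Scedrov 2.143 and 2.146), deriving $ba^{\circ}\leq r$ from the commuting square of maps for sufficiency, and transporting a given tabulation along the induced isomorphism for necessity. You merely unfold the cited facts from \cite{fs} explicitly where the paper invokes them by number.
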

\begin{proof}
%The statement follows from the fact that, for any tabular allegory $\A$, a pullback of $\xymatrix{\cdot\ar[r]^f &\cdot &\cdot\ar[l]_g}$ in $\mathsf{Map}(\A)$ is formed by a tabulation 
%of $g^{\circ}\cdot f$ which, by necessity, is unique up to isomorphism: see \cite{fs}, 2.1.47.

	Continuing to write $[-,-]$ for $[-,-]_\sim$, we first assume that the pair $([1,k'],[1,h'])$ tabulates $[1,k]^\circ\circ [1,h]$. In order to show the pullback property in $\mathsf{Map}(\mathsf{Span}_{\sim}(\C))$, we consider any maps $x,y$ in $\mathsf{Span}_{\sim}(\C)$ with $[1,h]\circ x=[1,k]\circ y$. Then, by \cite[2.146]{fs}, we get $y\circ x^{\circ}\leq[1,k]^{\circ}\circ [1, h]$, and by \cite[2.143]{fs}, this inequality suffices to guarantee the existence of a unique map $z$ with $[1,k']\circ z= x$ and $[1,k']\circ z=y$. This shows that $\Gamma_\sim$ preserves the given pullback in $\C$.	
	 	
	Conversely, suppose that the functor $\Gamma_{\sim}$ maps the given pullback in $\C$ to a pullback in $\mathsf{Map}(\mathsf{Span}_{\sim}(\C))$, and assume that $[1,k]^{\circ}\circ[1,h]$ has some tabulation $(x,y)$. Then, invoking \cite[2.143 and 2.146]{fs} again, we see that the square	
			\begin{center}
		$\xymatrix{\cdot\ar[d]_{x}\ar[r]^{y} &\cdot\ar[d]^{[1,k]}\\ \cdot\ar[r]_{[1,h]}&\cdot}$
	\end{center}
	is a pullback diagram in $\mathsf{Map}(\mathsf{Span}_{\sim}(\C))$ and, hence, isomorphic to the $\Gamma_\sim$-image of the given pullback in $\C$. Consequently, 
	$([1,k'],[1,h'])$ tabulates the morphism $[1,k]^{\circ}\circ[1,h]$. 	 
	 % $[1,k]_{\sim}^{\circ}\circ [1,h]_{\sim}=[r,s]_{\sim}\circ [\alpha, \beta]_{\sim}^\circ$ be a tabulation; then by \cite[2.147]{fs}, the square $[1,h]_{\sim}\circ [\alpha, \beta]_{\sim}=[1,k]_{\sim}\circ [r, s]_{\sim}$ is a pullback in $Map(\mathsf{Span}_{\sim}(\C))$. So there is an isomorphism $[\lambda, \mu]_{\sim}$ such that $[\alpha, \beta]_{\sim}\circ [\lambda, \mu]_{\sim}=[1, p]_{\sim}$ and $[r, s]_{\sim}\circ [\lambda, \mu]_{\sim}=[1, q]_{\sim}$. Since tabulations are unique up to isomorphism (see \cite[2.144]{fs}),  $[1,k]_{\sim}^{\circ}\circ [1,h]_{\sim}=[1,q]_{\sim}\circ [1, p]_{\sim}^\circ$ is a tabulation in $\mathsf{Span}_{\sim}(\C)$.	
	%Conversely, Suppose $[1,k]_{\sim}^{\circ}\circ [1,h]_{\sim}=[1,q]_{\sim}\circ [1, p]_{\sim}^\circ$ is a tabulation in $\mathsf{Span}_{\sim}(\C)$. Clearly Square \ref{pb in map} is commutative.  For a given commutative square  $[1,h]_{\sim}\circ [\alpha, \beta]_{\sim}=[1,k]_{\sim}\circ [r, s]_{\sim}$ in $Map(\mathsf{Span}_{\sim}(\C))$,   by \cite[2.146]{fs}, we get $$[r,s]_{\sim}\circ [\alpha, \beta]_{\sim}^{\circ}=[1,k]_{\sim}^{\circ}\circ [1, h]_{\sim}.$$ Then, by \cite[2.143]{fs}, there is a unique map $[\gamma, \eta]_{\sim}$ such that $[\alpha, \beta]_{\sim}=[1,p]_{\sim}\circ [\gamma, \eta]_{\sim}$  and  $[r, s]_{\sim}=[1,q]_{\sim}\circ [\gamma, \eta]_{\sim}$. Hence the square is a pullback.
\end{proof}

The assertion of the previous proposition may be strengthened when the relation $\sim$ is finer than the relation $\sim_\E$ induced by a stable factorization system $(\E,\M)$, as we show with the following proposition.

\begin{proposition}\label{m tabulation}
Let $(\E,\M)$ be a stable factorization system in $\C$ and $\sim$ a compatible and allegorical equivalence relation on $\mathsf{Span}(\C)$ containing $\sim_{\E}$. Then the following two conditions on the allegory $\mathsf{Span}_\sim(\C)$ are equivalent:
\begin{itemize}
\item[{\rm (i)}] for all morphisms $m\in \M$, the pair $([1,m]_\sim,[1,m]_\sim)$ tabulates $[m,m]_\sim$;	
\item[{\rm (ii)}] for all spans $(f,g):A\to B$ in $\C$, the pair $([1,\pi_1m]_{\sim},[1,\pi_2m]_{\sim})$ tabulates $[f,g]_{\sim}$, with $\pi_1,\pi_2$ denoting product projections, and with $m\in \M$ arising from the $(\E,\M)$-factorization of $\langle f,g\rangle:\cdot\to A\times B$.
\end{itemize}
%$(\E,\M)$ and ${\sf Span}_{\sim}(\C)$ is a unitary tabular allegory with $\sim_{\E}\subseteq \sim$. Then, for every $m\in \M$, $[m,m]_\sim =[1,m]_{\sim}\circ [1,m]_{\sim}^{\circ}$ is a tabulation if and only if  for every $[f, g]_\sim$, $$[f, g]_\sim = [1,\pi_Bm]_{\sim}\circ [1,\pi_Am]^{\circ}_{\sim}$$ is a tabulation, where $m$ is an $\M$-part of $\langle f,g\rangle $. 
If either condition holds, then the functor
$\Gamma_{\sim}:\C\longrightarrow \sf{Map}(\sf{Span}_{\sim}(\C))$ preserves pullbacks.

\end{proposition}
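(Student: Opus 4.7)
The plan is to treat the three claims---the implication $(i)\Rightarrow(ii)$, its converse, and pullback preservation---in turn, in each case reducing tabulation conditions to computations in the span calculus.

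For $(i)\Rightarrow(ii)$, fix a span $(f,g):A\to B$ with $(\E,\M)$-factorization $\langle f,g\rangle = me$. The equation $[f,g]_\sim = [1,\pi_2 m]_\sim \circ [1,\pi_1 m]_\sim^\circ$ unwinds by span composition to $[\pi_1 m,\pi_2 m]_\sim$, which equals $[f,g]_\sim$ already in $\sim_\E$ by the note preceding Theorem \ref{RelasquotientofSpan}, hence in $\sim\supseteq\sim_\E$. For the meet condition, I would compute $[1,\pi_1 m]_\sim^\circ[1,\pi_1 m]_\sim \wedge [1,\pi_2 m]_\sim^\circ[1,\pi_2 m]_\sim$ via pullbacks in $\C$: its representative is the kernel pair of $\langle\pi_1 m,\pi_2 m\rangle = m$, and (i) applied to $m\in\M$ then delivers the required identity $[\ker m]_\sim = 1_D$.

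For $(ii)\Rightarrow(i)$, fix $m:X\to Y$ in $\M$. The tabulation equation $[m,m]_\sim = [1,m]_\sim\circ[1,m]_\sim^\circ$ is automatic, and meet idempotency collapses the remaining condition to the single identity $[\ker m]_\sim = 1_X$. The plan is to apply (ii) to the span $(m,m):Y\to Y$ with vertex $X$. Letting $\langle m,m\rangle = \mu\varepsilon$ be its $(\E,\M)$-factorization, with $\mu\in\M$ and $\varepsilon\in\E$, condition (ii) yields $[\ker\mu]_\sim = 1_D$. Using $\ker m = \ker\langle m,m\rangle = \ker(\mu\varepsilon)$ (since $\Delta_Y$ is monic) together with the functorial identity $[1,\mu\varepsilon]_\sim = [1,\mu]_\sim\circ[1,\varepsilon]_\sim$, the product $[1,\mu\varepsilon]_\sim^\circ\circ[1,\mu\varepsilon]_\sim$ collapses in $\sim$ to $[\ker\varepsilon]_\sim$. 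The main obstacle is then to show $[\ker\varepsilon]_\sim = 1_X$: here I would exploit the identities $\pi_i\mu\varepsilon = m$ for $i=1,2$ together with orthogonality of the factorization system to fit a diagonal between $\varepsilon\in\E$ and $m\in\M$, producing a retraction of $\varepsilon$ that makes it a split monomorphism in $\C$ with trivial kernel pair.

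For pullback preservation, given a pullback of $h$ along $k$ in $\C$ with legs $h',k'$, I would invoke Proposition \ref{pb iff tab}: it suffices that the pair $([1,k']_\sim,[1,h']_\sim)$ tabulate $[1,k]_\sim^\circ\circ[1,h]_\sim$. The equation reduces to a direct span composition yielding $[k',h']_\sim$. For the meet condition, the joint monicity of the pullback projections makes $\langle k',h'\rangle$ a monomorphism in $\C$, so its kernel pair is trivial and the required identity holds already in $\mathsf{Span}(\C)$---and a fortiori in the quotient---so that this step in fact makes no explicit use of (i) or (ii).
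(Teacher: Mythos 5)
Your proposal is correct, and while the direction (i)$\Rightarrow$(ii) coincides with the paper's argument (both reduce the meet of the kernel pairs of $\pi_1m$ and $\pi_2m$ to the kernel pair of $m$ and then invoke (i)), the other two parts take genuinely different routes. For (ii)$\Rightarrow$(i), the paper also isolates the key fact that the $\E$-part $c$ of $\langle m,m\rangle$ is a split monomorphism, but it obtains this by producing a second $(\E,\M)$-factorization $m=n'(c'c)$ and comparing it with the trivial one $m=m\cdot 1$, and it then concludes by transporting the tabulation of $[m,m]_\sim$ supplied by (ii) along the resulting isomorphism $[1,c]_\sim$; you instead split your $\varepsilon$ by the orthogonality (diagonal fill-in) of $\varepsilon\in\E$ against $m\in\M$ in the square $\pi_1\mu\cdot\varepsilon=m\cdot 1$, and finish by the purely equational computation $[\ker m]_\sim=[1,\varepsilon]^{\circ}_\sim\circ[\ker\mu]_\sim\circ[1,\varepsilon]_\sim=[\ker\varepsilon]_\sim=1$. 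Both are valid; yours trades the transport-along-isomorphism step for a short allegory calculation. The most interesting divergence is in the pullback-preservation part: the paper again invokes hypothesis (ii) to tabulate $[k',h']_\sim$ by $([1,\pi_1m']_\sim,[1,\pi_2m']_\sim)$ and then transports along the isomorphism $[1,e']_\sim$, whereas you verify the tabulation condition of Proposition \ref{pb iff tab} directly in $\mathsf{Span}(\C)$, using the joint monicity of the pullback projections to see that the relevant meet is already the identity span before passing to the quotient. This is correct and in fact buys a slightly stronger statement than the one asserted: $\Gamma_{\sim}$ preserves pullbacks for every compatible allegorical $\sim$, independently of conditions (i) and (ii).
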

\begin{proof}
(i)$\Longrightarrow$(ii): Let $\langle f,g\rangle =me$ be a factorization with $m\in \M$ and $e\in \E$. Then, since $\sim_\E\,\subseteq\,\sim$, one trivially has $[f, g]_\sim =[\pi_1m,\pi_2m]_{\sim}= [1,\pi_2m]_{\sim}\circ [1,\pi_1m]^{\circ}_{\sim}$. It therefore suffices to show that the meet
$[1,\pi_1 m]_{\sim}^{\circ}\circ [1,\pi_1 m]_{\sim}\wedge [1,\pi_2 m]_{\sim}^{\circ}\circ [1,\pi_2 m]_{\sim}$ is an identity morphism. But the LHS of this meet is represented by the kernel pair of $\pi_1m$, and its RHS by the kernel pair of $\pi_2m$ which, as a routine calculation in $\C$ shows,  makes the meet to be represented by the kernel pair $(m_1,m_2)$ of $m$. Now, with hypothesis (i), we obtain the desired identity
%Since $[m,m]_\sim =[1,m]_{\sim}\circ [1,m]_{\sim}^{\circ}$   is a tabulation, we have  
$$[m_1,m_2]_\sim=[1,m]_{\sim}^{\circ}\circ [1,m]_{\sim}=[1,m]_{\sim}^{\circ}\circ [1,m]_{\sim}\wedge [1,m]_{\sim}^{\circ}\circ [1,m]_{\sim}=1.$$ 
%=[1,m]_{\sim}^{\circ}\circ [1,m]_{\sim}=[m_1, m_2]_\sim$$ where $(m_1, m_2)$ is  a kernel pair of $m$. %Now by Lemma \ref{kernel product pullback} we get: $$[1,\pi_A m]_{\sim}^{\circ}\circ [1,\pi_A m]_{\sim}\wedge [1,\pi_B m]_{\sim}^{\circ}\circ [1,\pi_B m]_{\sim}=[m_1, m_2]_\sim=[1, 1]_\sim$$

(ii)$\Longrightarrow$(i):
Consider $m\in \M$ and form the factorizations $\langle m,m\rangle= nc$ and $\pi_1n=n'c'$ with $c,c'\in\E$ and $n,n'\in\M$. Then $n'(c'c)=\pi_1\langle m,m \rangle=m$ is an $(\E,\M)$-factorization of $m$, which forces $c'c$ to be an isomorphism and, hence, makes $c$ a (split) monomorphism lying in $\E$. Consequently, in $\mathsf{Span}(\C)$ we have 
$$(c,1)\circ(1,c)=(1,1)\quad\text{and}\quad(1,c)\circ(c,1)=(c,c)\leq_\E(1,1),$$
so that $[1,c]_\sim$ becomes an isomorphism in $\mathsf{Span}_\sim(\C)$. Since trivially
$$(1,\pi_1n)\circ(1,c)=(1,m)=(1,\pi_2n)\circ(1,c),$$
we have the commutative diagram
%be a factorization. Then $[m, m]_\sim = [1,\pi_2n]_{\sim}\circ [1,\pi_1n]^{\circ}_{\sim}$ is a tabulation. If  $\pi_1 n=n'c'$ is  a factorization with $ n'\in \M$ and $ c'\in \E$, then $m=n'c'c$, and so by uniqueness of factorization, $c'c$ is an isomorphism and thus $c$ is monic. Since $c\in \E$, $[1,c]_\sim$ is an isomorphism. The commutativity of the diagram
\begin{equation}
\xymatrix{&&\cdot\ar[lld]_{[1,m]_\sim}\ar[dd]^{[1,c]_\sim}\ar[rrd]^{[1,m]_\sim}&&\\  \cdot &&&& \cdot\\ &&\cdot  \ar[llu]^{[1,\pi_1n]_{\sim}}\ar[rru]_{[1,\pi_2n]_{\sim}}&&} 		
 \end{equation}
in $\mathsf{Span}_\sim(\C)$, of which the lower span tabulates $[m,m]_\sim$, by hypothesis (ii). Therefore, with $[1,c]_\sim$ being an isomorphism, its upper span tabulates $[m,m]_\sim$ as well.
%and uniqueness of tabulations up to isomorphism yields, $[m,m]_\sim =[1,m]_{\sim}\circ [1,m]_{\sim}^{\circ}$ is a tabulation.

The preservation of pullbacks by $\Gamma_\sim$ remains to be shown. For that, given a pullback in $\C$ as in Proposition \ref{pb iff tab}, it suffices to prove that the pair $([1,k']_{\sim },[1,h']_{\sim})$ tabulates the morphism $[1,k]_{\sim} ^{\circ}\circ [1,h]_{\sim}$ which, trivially, equals $[k',h']_\sim$. By hypothesis (ii), that morphism is tabulated by the pair $([1,\pi_1m']_\sim,[1,\pi_2m']_\sim)$, where $m'\in\M$ belongs to the factorization $\langle k',h'\rangle = m'e'$ with $e'\in\E$. As $\langle k',h'\rangle$ is monic, so is $e'$, so that one concludes that $[1,e']_\sim$ is an isomorphism---just like for $[1,c]_\sim$ above. And similarly to the above argumentation one can finish the proof by just pointing to the trivially commuting diagram
%Obviously,  $[1,f]_{\sim}^{\circ}\circ [1,g]_{\sim}=[1,g']_{\sim}\circ [1,f']_{\sim} ^{\circ}$. Let $\langle f',g'\rangle = m'e'$ be a factorization; then by hypothesis, $[1,\pi_Am']_{\sim}\circ [1,\pi_Cm']^{\circ}_{\sim}$ is a tabulation of $[1,f]_{\sim}^{\circ}\circ [1,g]_{\sim}=[f',g']_{\sim}$. Note that  since $(f', g')=\rm pb (g,f)$, $\langle f',g'\rangle $ is monic, and therefore so is $e'$. Since $\sim_{\E}\subseteq\sim$, $[1,e']_{\sim}$ is an isomorphism. Since $[1,\pi_Am']_{\sim}\circ [1,e']_{\sim}=[1,f']_{\sim}$ and  $[1,\pi_Cm']_{\sim}\circ [1,e']_{\sim}=[1,g']_{\sim}$ and tabulations are unique up to isomorphism, $[1,f']_{\sim}$ and $[1,g']_{\sim}$ tabulate $[1,f]_{\sim}^{\circ}\circ [1,g]_{\sim}$. Thus square \ref{pb in map1} is a pullback.
\begin{equation}
\xymatrix{&&\cdot\ar[lld]_{[1,k']_\sim}\ar[dd]^{[1,e']_\sim}\ar[rrd]^{[1,h']_\sim}&&\\  \cdot &&&& \cdot\\ &&\cdot  \ar[llu]^{[1,\pi_1m']_{\sim}}\ar[rru]_{[1,\pi_2m']_{\sim}}&&} 		
 \end{equation}

\end{proof}

% $\circ\hs{-2}^{*}$ AAA $\Huge\circ\hs{-2}\star$ BBB $\otimes \hs{-2.7}\star$ CCC $\oplus \hs{-2.7}\star$ DDD $\M^{\otimes \hs{-3.2}\oplus}$

Given a stable factorization system $(\E,\M)$ in $\C$, the question remains how to refine the relation $\sim_\E$ such that the finer relation satisfies the equivalent conditions of Proposition \ref{m tabulation}. The following notion of conjugation of morphisms in $\M$ as provided in \cite{HST} turns out to be crucial to answer this question.
 \begin{definition}
 In the following commutative diagram in $\C$, let the back and front squares be pullbacks and $m\in\M$. We then call the (unique) morphism $m^*$ the {\em conjugate of $m$ along $f$ and $s$} and denote by $\M^*$ the closure under pullbacks of the class all morphisms that are presentable as conjugates of some $m\in \M$ along some morphisms $f$ and $s$. %hich is denoted by $m^{*}$ is the unique morphism rendering the top and  left squares commutative.
 \begin{center}
 	$\xymatrix{\cdot\ar[rrr]^{f'}\ar[ddd]_{s'}\ar[rd]^{m^{*}} &&&\cdot \ar@{=}[rd]^{}\ar[ddd]^{s} &\\
 		&\cdot \ar[rrr]^{g'}\ar[ddd]_{t'} &&&\cdot\ar[ddd]^{t}\\
 		&&&&\\
 		\cdot\ar[rrr]^{f}\ar@{=}[rd]_{} &&&\cdot\ar[rd]^{m} &\\
 		&\cdot\ar[rrr]_{g} &&&\cdot}$
 \end{center}
 \end{definition} 
Let us note immediately that the formation of the class $\M^*$ may be important only when $\M$ is {\em not} a class of monomorphisms in $\C$. Indeed, when $m$ is a monomorphism, the right panel of the cube is a pullback which, since the back and front panels are pullbacks, makes also the left panel a pullback. But then $m^*$ must be an isomorphism, as a pullback of an identity morphism. Consequently, $\M^*$ is just the class $\mathsf{Iso}(\C)$ when $\M\subseteq\mathsf{Mono}(\C)$.

Determining membership in $\M^*$ in the general case is a lot harder. But at least one can easily see that split monomorphisms with a retraction lying in $\M$ belong to $\M^*$, as follows:

%Letting $\mathcal M^{*}$ be the closure under pullback of $\{ m^*: m^* $ is a conjugate of $m\in\mathcal M\}$; we have:
\begin{lemma}\label{sec in M*}
	Suppose $rs=1$ with $r\in\M$. Then $s\in \mathcal M^*$.
\end{lemma}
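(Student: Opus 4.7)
The plan is to exhibit $s$ directly as a conjugate of $r$ along suitably chosen data, so that membership in $\M^*$ follows without even having to take a pullback.

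Write $r:X\to Y$ and $s:Y\to X$ with $rs=1_Y$ and $r\in\M$. In the cube defining a conjugate, I take $m:=r$, so that $D=X$ and $D'=Y$. I then set $B=B'=Y$, $C=C'=X$, and I prescribe the morphisms of the right and bottom faces of the cube by $s_{\mathrm{cube}}:=s$, $f:=1_X$, $t:=1_Y$, and $g:=r$. With these data the right face commutes because $m\,s_{\mathrm{cube}}=rs=1_Y=t$, and the bottom face commutes because $mf=r\cdot 1_X=r=g$.

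Next I would identify the back and front pullbacks. The back pullback of $s:Y\to X$ against $1_X:X\to X$ is $Y$, with projections $f'=1_Y$ and $s'=s$; the front pullback of $1_Y:Y\to Y$ against $r:X\to Y$ is $X$, with projections $g'=r$ and $t'=1_X$. Both identifications are immediate, and the resulting back and front squares commute trivially.

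Finally I would check that the induced morphism $m^*:Y\to X$ is forced to equal $s$: by the defining universal property it must satisfy $g'\,m^*=f'$ and $t'\,m^*=s'$, which read $r\,m^*=1_Y$ and $m^*=s$; both are consistent because $rs=1_Y$. Hence $m^*=s$, and since $r\in\M$ this displays $s$ as a conjugate of a morphism in $\M$, so $s\in\M^*$. The only real subtlety in the argument is guessing the right cube, and that choice is dictated by wanting the back pullback to collapse to $Y$ (using that $s$ is a split mono into $X$) while the front pullback realises $X$; once this choice is made, all commutativity and universality verifications are essentially tautologies.
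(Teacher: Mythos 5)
Your proof is correct and is essentially identical to the paper's: the paper exhibits $s$ as the conjugate of $r$ along $1$ and $s$ using precisely the cube you describe, with trivial back and front pullbacks given by the identity squares on $s$ and $r$ respectively. No differences worth noting.
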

\begin{proof}
	The diagram 
	\begin{center}
		$\xymatrix{\cdot\ar[rrr]^{1}\ar[ddd]_{s}\ar[rd]^{s} &&&\cdot \ar@{=}[rd]^{}\ar[ddd]^{s} &\\
			& \cdot\ar[rrr]^{r}\ar[ddd]_{1} &&&\cdot\ar[ddd]^{1}\\
			&&&&\\
			\cdot\ar[rrr]^{1}\ar@{=}[rd]_{} &&&\cdot\ar[rd]^{r} &\\
			&\cdot\ar[rrr]_{r} &&&\cdot}$
	\end{center}
	shows that $s$ is the conjugate of $r$ along $1$ and $s$. Thus $s\in\mathcal M^*$.
\end{proof}

Trading $\mathrm{SplitEpi}(\C)$ for $\M^*$ in Definition \ref{def E_o}, we define:

\begin{definition}
	For a stable factorization system $(\E,\M)$ in $\C$, we denote by $\E_{\bullet}$ the least stable system containing both $\E$ and $\M^*$:
	$$ \E_{\bullet}=(\E \cup \M^*)^{c}.$$
\end{definition}
We note that, when $\M$ is a class of monomorphisms in $\C$, so that $\M^*=\mathrm{Iso}(\C)$, one has $\E_{\bullet}=\E=\E_{\circ}$, but generally the classes $\E_{\bullet}$ and $\E_{\circ}$ may be incomparable by inclusion. Nevertheless, their induced equivalence relations {\em are} comparable by inclusion:

%Let $\E_{\bullet}=(\E \cup \M^*)^{c}$. Obviously  $\E_{\bullet}$ is a stable system. Hence we get the category ${\sf Span}_{\E_{\bullet}}(\C)$. We can define the meet and involution in ${\sf Span}_{\E_{\bullet}}(\C)$ as given in  Proposition \ref{almostallegory}. The well-definedness follows easily. With $\E_{\circ}$ as in Remark \ref{def E_o}, we have

\begin{proposition}\label{relations A B}
%For a finitely complete category $\C$ with a stable factorization structure 
Let $(\E , \M)$ be a stable factorization system in $\C$. Then the equivalence relation	$ \sim_{\E_{\circ}}$ is included in $ \sim_{\E_{\bullet}}$, and $\mathsf{Span}_{\E_{\bullet}}(\C)$ is an allegory. 
\end{proposition}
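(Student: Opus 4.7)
My plan is to handle the two assertions in sequence, deriving the second from the first via Corollary \ref{F contains Ret}.

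For the inclusion $\sim_{\E_\circ}\,\subseteq\,\sim_{\E_\bullet}$, I first observe that, because both $\E$ and $\mathrm{SplitEpi}(\C)$ are pullback-stable, $\E_\circ$ is simply the composition-closure of $\E\cup\mathrm{SplitEpi}(\C)$. It therefore suffices to verify that any $\leq_{\E_\circ}$-step coming from a single morphism in $\E\cup\mathrm{SplitEpi}(\C)$ already induces a $\sim_{\E_\bullet}$-equivalence, since an induction on the composition length then handles arbitrary generators. For a step coming from a morphism in $\E$ the claim is immediate as $\E\subseteq\E_\bullet$. So let $r\colon D\to \tilde D$ be a split epi with section $s$, realising $r\colon (f,g)\to(\tilde f,\tilde g)$, and factor $r=m_re_r$ with $e_r\in\E$ and $m_r\in\M$. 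From $m_r(e_rs)=rs=1$ I see that $m_r\in\M$ is itself a split epimorphism with section $e_rs$, so Lemma \ref{sec in M*} yields $e_rs\in\M^*\subseteq\E_\bullet$. The identities $\tilde f m_re_r=f$, $\tilde g m_re_r=g$, $\tilde f m_r\cdot e_rs=\tilde f$, and $\tilde g m_r\cdot e_rs=\tilde g$ then exhibit
$$(f,g)\;\xrightarrow{\,e_r\,}\;(\tilde f m_r,\,\tilde g m_r)\;\xleftarrow{\,e_rs\,}\;(\tilde f,\tilde g)$$
as the desired $\sim_{\E_\bullet}$-zigzag, completing the inclusion.

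For the second assertion I invoke the purely formal observation that, whenever $\sim_1\,\subseteq\,\sim_2$ are compatible equivalence relations for $\C$-spans and $\sim_1$ is allegorical, then so is $\sim_2$, because the defining identities $(1_A,f)\sim_1(f,f)\circ(1_A,f)$ carry over verbatim. Since $\E_\circ$ contains $\mathrm{SplitEpi}(\C)$ by construction, Corollary \ref{F contains Ret} says that $\sim_{\E_\circ}$ is allegorical; combined with the inclusion just established, this forces $\sim_{\E_\bullet}$ to be allegorical as well. As $\sim_{\E_\bullet}$ is a compatible equivalence relation (being of the form $\sim_\E$ for the stable system $\E=\E_\bullet$, per the remark preceding Lemma \ref{st-sy}), Theorem \ref{Spanallegory} then promotes $\mathsf{Span}_{\E_\bullet}(\C)$ to an allegory.

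The only place where anything non-formal happens is the factor-and-apply-Lemma \ref{sec in M*} move in the split-epimorphism case; once that is in hand the rest is just bookkeeping and invocation of earlier results. In particular, I expect no real obstacle: the main conceptual point is that although $\E_\circ$ and $\E_\bullet$ need not be comparable as morphism classes (as the discussion preceding the proposition emphasises), their induced equivalence relations \emph{are} comparable, precisely because split-epi zigzags in $\E_\circ$ can be refactored through $\M$ to expose $\M^*$-morphisms.
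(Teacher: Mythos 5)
Your proof is correct and follows essentially the same route as the paper's: reduce to a single $\leq_{\E_{\circ}}$-step generated by a retraction $r$, factor $r=m_re_r$ through $(\E,\M)$, apply Lemma \ref{sec in M*} to place $e_rs$ in $\M^*\subseteq\E_{\bullet}$, and join the two spans via the intermediate span $(\tilde f m_r,\tilde g m_r)$; the allegory claim then follows, as in the paper, from Corollary \ref{F contains Ret} together with the observation that the allegorical property passes to any larger compatible equivalence relation.
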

\begin{proof}
It suffices to show that $(f,g) \leq_{\E_{\circ}} (h,k)$ implies $(f,g) \sim_{\E_{\bullet}} (h,k)$. The hypothesis $(f,g) \leq_{\E_{\circ}} (h,k)$ gives us a morphism  $r\in \E_{\circ}$ such that 
$\langle f,g\rangle=\langle h,k\rangle r$. As $r$ is a composite of morphisms in $\E\cup\mathrm{SplitEpi}(\C)$, one may reason inductively and restrict oneself to the only non-trivial case, that is that $r$ is a retraction.
%Saying $r$ has length $n$ if $r=r_1r_2\cdots r_n$, with each $r_i$ in $\E \cup \sf{Ret}(\C)$ and using induction, we need to first show that
%if $r\in \E \cup \sf{Ret}(\C)$, then $(f,g) \sim_{\E_{\bullet}} (h,k)$. 
%So suppose that $r\in \E \cup \sf{Ret}(\C)$. Then either $r\in \E$ or $r\in \sf{Ret}(\C)$. The former case obviously yields $(f,g) \sim_{\E_{\bullet}} (h,k)$. In the latter case, 
There is then a morphism $s$ with $rs=1$, and factoring $r=me$, with $m\in\M$ and $e\in\E$, we have $mes=1$. Thus, by Lemma
\ref{sec in M*}, $es\in \M^*$. Since $\langle h,k\rangle=\langle f,g\rangle s=\langle hm,km\rangle es$, this implies $(h,k)\leq_{\E_{\bullet}}( hm,km)$.
Since $\langle f,g\rangle=\langle hm,km\rangle e$ trivially gives $(f,g) \leq_{\E_{\bullet}} (hm,km)$, %. On the other hand $\langle hm,km\rangle es=\langle h,k\rangle$
%implies $(h,k) \leq_{\E_{\bullet}} (hm,km)$. Hence 
we can conclude that$(f,g) \sim_{\E_{\bullet}} (h,k)$ holds.
%Assuming the result holds in case $r$ has length $n-1$, let $r=r_1r_2\cdots r_n$ with each $r_i\in \E \cup \sf{Ret}(\C)$. Then 
%$\langle f,g\rangle=\langle hr_1r_2\cdots r_{n-1},kr_1r_2\cdots r_{n-1}\rangle r_n$, with $r_n\in\E \cup \sf{Ret}(\C)$. By above we have $(f,g) \sim_{\E_{\bullet}} \langle hr_1r_2\cdots r_{n-1},kr_1r_2\cdots r_{n-1}\rangle$. On the other hand since
%$\langle hr_1r_2\cdots r_{n-1},kr_1r_2\cdots r_{n-1}\rangle=\langle h,k\rangle r_1r_2\cdots r_{n-1}$, and each $r_i\in \E \cup \sf{Ret}(\C)$, by induction hypothesis we have $\langle hr_1r_2\cdots r_{n-1},kr_1r_2\cdots r_{n-1}\rangle \sim_{\E_{\bullet}} (h,k)$. Hence
%$(f,g) \sim_{\E_{\bullet}} (h,k)$.
%\begin{corollary}\label{phi pres}
%The mapping $\Phi:\sf{Span}_{\sim_a}(\C,\C)\rightarrow\sf{Span}_{\mathcal B}(\C,\C)$, taking $[f,g]_{\sim_a}$ to %$[f,g]_{\mathcal B}$ is a functor that preserves the meet and the involution.
%$\end{corollary}
%\begin{proof}
%The funcoriality follows from \ref{relations A B}. The rest is easy.
%\end{proof}

The relation $\sim_{\E_{\circ}}$ is allegorical by Corollary \ref{F contains Ret}, and one sees at once that this property gets inherited by any finer compatible equivalence relation. Hence, by Proposition \ref{relations A B},  $\sim_{\E_{\bullet}}$ is also allegorical, so that by Theorem \ref{almostallegory},  ${\sf Span}_{\E_{\bullet}}(\C)$ is an allegory. % while $\E_{\bullet}$  may not be part of a factorization structure.
% \begin{theorem}
%	The category ${\sf Span}_{\B}(\C,\C)$, together with the meet and the involution is an allegory.
%\end{theorem}
%%Follows from Proposition \ref{relations A B} and Theorem \ref{sim contains sim_a iff alleg}.
%\end{proof}
\end{proof}

In what follows, we are now ready to show that the allegory $\sf{Span}_{\E_{\bullet}}(\C)$ is even unitary and tabular. We denote the $\sim_{\E_{\bullet}}$-equivalence class of a span $(f,g)$ by $[f,g]_{\E_{\bullet}}$.

\begin{theorem}\label{tabulation}
For a finitely complete category $\C$ with a stable factorization structure $(\E , \M)$, the category	$\sf{Span}_{\E_{\bullet}}(\C)$ is a unitary tabular allegory. The terminal object in $\C$ serves as a unit in $\sf{Span}_{\E_{\bullet}}(\C)$, and the tabulation of a morphism 
	$[f,g]_{\E_{\bullet}}: A\to B$ is given by 
	$$ [f,g]_{\E_{\bullet}}	= [1,\pi_2m]_{{\mathcal E}_{\bullet}}\circ [1,\pi_1m]^{\circ}_{\E_{\bullet}},$$ where $m\in\M$ belongs to the $(\E,\M)$-factorization of the morphism 
	$\langle f,g\rangle:\cdot\to A\times B$, with $\pi_1$, $\pi_2$ denoting the product projections. %; and with unit given by a terminal object of $\C$.
\end{theorem}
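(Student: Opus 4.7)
The plan is to start from the fact that $\mathsf{Span}_{\E_\bullet}(\C)$ is already an allegory, by Proposition \ref{relations A B}, and then to verify two things separately: that the terminal object $T$ of $\C$ is a unit in $\mathsf{Span}_{\E_\bullet}(\C)$, and that the asserted formula tabulates every morphism. Since $\E\subseteq\E_\bullet$ gives $\sim_\E\,\subseteq\,\sim_{\E_\bullet}$, the stated tabulation formula is precisely condition (ii) of Proposition \ref{m tabulation} applied to $\sim\,=\,\sim_{\E_\bullet}$, so tabularity will follow once condition (i) there is established.

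To verify (i), I fix $m\in\M$ and show that the pair $([1,m]_{\E_\bullet},[1,m]_{\E_\bullet})$ tabulates $[m,m]_{\E_\bullet}$. A direct calculation at the span level, in which the relevant pullback is trivial, gives $[1,m]_{\E_\bullet}\circ[1,m]_{\E_\bullet}^\circ=[m,m]_{\E_\bullet}$. The decisive identity is $[1,m]_{\E_\bullet}^\circ\circ[1,m]_{\E_\bullet}=1_A$: the span composite is represented by $(m_1,m_2)$, the kernel pair of $m$, and by Lemma \ref{st-sy} it suffices to exhibit morphisms of $\E_\bullet$ witnessing $(m_1,m_2)\sim_{\E_\bullet}(1_A,1_A)$. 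The diagonal $\Delta:A\to K$ satisfies $m_1\Delta=m_2\Delta=1_A$, so it is a section of the retraction $m_1$; since $m_1\in\M$ as a pullback of $m$, Lemma \ref{sec in M*} places $\Delta$ in $\M^*\subseteq\E_\bullet$, and combined with $1_A\in\E_\bullet$ this gives $(1_A,1_A)\leq_{\E_\bullet}(m_1,m_2)$. Proposition \ref{m tabulation} then produces condition (ii), which is exactly the tabulation formula of the theorem.

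For unitarity, every morphism in $\mathsf{Span}_{\E_\bullet}(\C)(T,T)$ has the form $[t_D,t_D]_{\E_\bullet}$ for the unique $t_D:D\to T$. Computing the meet $(t_D,t_D)\wedge(1_T,1_T)$ via diagram (\ref{def of meet in span})—noting that $\langle 1_T,1_T\rangle$ is, up to the canonical isomorphism $T\cong T\times T$, an isomorphism—reduces trivially to $(t_D,t_D)$ itself, so $[t_D,t_D]_{\E_\bullet}\leq[1_T,1_T]_{\E_\bullet}$ and $1_T$ is the top element of its hom-set. For any object $A$, setting $r=[1_A,t_A]_{\E_\bullet}$, the pullback of $t_A$ with itself is $A\times A$, so a direct composition yields $r^\circ\circ r=[p_1,p_2]_{\E_\bullet}$ with $p_1,p_2$ the product projections, and the meet $(1_A,1_A)\wedge(p_1,p_2)$, computed at the span level via the diagonal $\Delta:A\to A\times A$, equals $(1_A,1_A)$, confirming $1_A\leq r^\circ\circ r$. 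The principal obstacle throughout is the identity $[m_1,m_2]_{\E_\bullet}=1_A$ for the kernel pair of an $\M$-morphism: it is precisely here that the enlargement from $\E$ to $\E_\bullet=(\E\cup\M^*)^c$ is indispensable, with Lemma \ref{sec in M*} providing the pivotal insertion of the diagonal into $\M^*$.
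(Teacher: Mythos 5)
Your proposal is correct and follows essentially the same route as the paper: it reduces tabularity to condition (i) of Proposition \ref{m tabulation}, establishes $[m_1,m_2]_{\E_{\bullet}}=1$ by placing the diagonal section of the kernel-pair projection $m_1\in\M$ into $\M^*\subseteq\E_{\bullet}$ via Lemma \ref{sec in M*}, and verifies unitarity of the terminal object by the same meet computations with the product projections. The only cosmetic difference is your invocation of Lemma \ref{st-sy}, which is not needed since $(1_A,1_A)\leq_{\E_{\bullet}}(m_1,m_2)$ already gives the required $\sim_{\E_{\bullet}}$-equivalence directly.
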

\begin{proof}
As mentioned at the beginning of Section 3, the span $(\pi_1,\pi_2)$ given by the projections of $A\times B$ is obviously neutral with respect to $\wedge$ in the hom-set $\mathsf{Span}(\C)(A,B)$, for any objects $A,B$; this holds in particular for $A=B=E$ a terminal object in $\C$. But in this 
case the projections are isomorphisms, and the maximality of $1_E=[1_E,1_E]_{\E_{\bullet}}
=[\pi_1,\pi_2]_{\E_{\bullet}}$ in $ \sf{Span}_{\E_{\bullet}}(\C)(E,E)$
follows. Furthermore, for any object $A$ one has the morphism $r=[1_A,!_A]_{{\E}_{\bullet}}:A\to E$ which trivially satisfies $r^{\circ}\cdot r\geq1_A$. Hence, $E$ is a unit in  
$\sf{Span}_{\E_{\bullet}}(\C)$.
%For $E$ terminal in $\C$, the only morphisms $E\to E$ in $\sf{Span}_{\E_{\bullet}}(\C)$ are equivalence classes of spans $(!_D,!_D)$ with $!_D\to e$ for an object $D$ in $\C$.

In order to prove that morphisms in $\sf{Span}_{\E_{\bullet}}(\C)$ may be tabulated as described in the theorem, 
	%For a morphism $[f,g]_{\E_{\bullet}}:A\rightarrow B$ in $\sf{Span}_{\E_{\bullet}}(\C)$, let $m$ be the $\mathcal M$-part of $\langle f,g\rangle $. So $[f,g]_{\E_{\bullet}}=[\pi_A m,\pi_B m]_{\E_{\bullet}}=[1,\pi_Bm]_{\E_{\bullet}}\circ [\pi_Am,1]_{\E_{\bullet}}$.	
	according to Proposition \ref{m tabulation} we just need to show that, for every $m\in \M$, the pair $([1,m]_{{\E}_{\bullet}},[1,m]_{{\E}_{\bullet}})$ tabulates $[m, m]_{\E_{\bullet}}$. Indeed, one trivially has	
	$[m, m]_{\E_{\bullet}}=[1, m]_{\E_{\bullet}}\circ [1, m]^{\circ}_{\E_{\bullet}}$, and %is a tabulation. We have 
	$$[m,1]_{\E_{\bullet}}\circ [1,m]_{\E_{\bullet}}\wedge[m,1]_{\E_{\bullet}}\circ [1,m]_{\E_{\bullet}}=[m,1]_{\E_{\bullet}}\circ [1,m]_{\E_{\bullet}}=	[m_1,m_2]_{\E_{\bullet}}$$
	is simply given by the kernel pair $(m_1,m_2)$ of $m$. By Lemma \ref{sec in M*}, the common section of $m_1,m_2$ lies in $\M^*\subseteq\E_{\bullet}$, so that 
	  %On the other hand, there is a $\gamma$ such that $m_1\gamma=m_2\gamma=1$. Since $\gamma$ is a conjugate of $m\in\mathcal M$, $\gamma\in \E_{\bullet}$. So 
	  $[m_1,m_2]_{\E_{\bullet}}=[1,1]_{\E_{\bullet}}$ follows. Hence, the allegory $\sf{Span}_{\E_{\bullet}}(\C)$ is tabular.
	%One can easily verify a terminal object of $\C$ is a unit in $\sf{Span}_{\E_{\bullet}}(\C)$. Hence $\sf{Span}_{\E_{\bullet}}(\C)$ is a unitary tabular allegory.
\end{proof}
Recalling that $\M \subseteq \mathrm{Mono}(\C)$ implies $\M^{*}=\mathrm{Iso}(\C)$ and then $\E_{\bullet}=\E=\E_{\circ}$, we may now augment the assertion of Corollary \ref{alleg iff cotains ret}, as follows:     %Proposition  \ref{almostallegory} and Theorems \ref{nec suf con rel be all} and  \ref{tabulation} imply that:

\begin{corollary}\label{nec suf con rel be tball}        
For a finitely complete category  $\C$  with a stable factorization system $(\E,\M)$, the category  	${\sf Span}_\E(\C)\cong\sf{Rel}_\M(\C)$ is an allegory if, and only if, $ \mathrm{SplitEpi}(\C)\subseteq\E  $ or, equivalently, $\M \subseteq \mathrm{Mono}(\C)$, and it is then even unitary and tabular.
	\end{corollary}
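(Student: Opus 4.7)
The three equivalences and the \emph{only if} direction for the allegory assertion are already contained in Corollary \ref{alleg iff cotains ret}, so the only new content is that, when the equivalent conditions are satisfied, $\sf{Span}_\E(\C)$ is additionally unitary and tabular. The plan is to reduce this assertion to Theorem \ref{tabulation} by identifying the class $\E_\bullet$ with $\E$ under the hypothesis $\M\subseteq\mathrm{Mono}(\C)$.

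First, I would invoke the observation made just after the definition of $\M^*$: whenever $m\in\M$ is a monomorphism, the right panel of the defining cube for a conjugate $m^*$ is automatically a pullback, so the left panel is a pullback as well, forcing $m^*$ to be an isomorphism as a pullback of an identity. Since pullbacks of isomorphisms are isomorphisms, this gives $\M^*=\mathrm{Iso}(\C)$ whenever $\M\subseteq\mathrm{Mono}(\C)$. Consequently, by the definition of $\E_\bullet$ as the closure under composition of $\E\cup\M^*$, one obtains $\E_\bullet=\E$ in this situation.

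With this identification in hand, the conclusion is immediate: Theorem \ref{tabulation} asserts that $\sf{Span}_{\E_\bullet}(\C)$ is a unitary tabular allegory for \emph{any} stable factorization system $(\E,\M)$ in a finitely complete category $\C$, with the terminal object serving as a unit and with the tabulation of $[f,g]_{\E_\bullet}$ given by $([1,\pi_1 m]_{\E_\bullet},[1,\pi_2 m]_{\E_\bullet})$ where $m$ is the $\M$-part of $\langle f,g\rangle$. Substituting $\E_\bullet=\E$ yields the desired statement for $\sf{Span}_\E(\C)$, and the isomorphism $\sf{Span}_\E(\C)\cong\sf{Rel}_\M(\C)$ of Theorem \ref{RelasquotientofSpan} transfers the property to $\sf{Rel}_\M(\C)$.

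There is no real obstacle here, since the heavy lifting has been done in Theorem \ref{tabulation}; the only point requiring care is the verification $\M^*=\mathrm{Iso}(\C)$ under the monomorphism hypothesis, but this is a short diagram chase in the cube defining $\M^*$.
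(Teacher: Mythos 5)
Your proposal is correct and follows essentially the same route as the paper: the paper likewise derives the unitary/tabular addendum by noting that $\M\subseteq\mathrm{Mono}(\C)$ forces $\M^{*}=\mathrm{Iso}(\C)$, hence $\E_{\bullet}=\E$, and then applies Theorem \ref{tabulation} together with Corollary \ref{alleg iff cotains ret} and the isomorphism of Theorem \ref{RelasquotientofSpan}.
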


	%\begin{corollary}\label{nec suf con rel be tball}
	%For a finitely complete category $\C$ with a stable factorization structure $(\E,\M)$, the following statements are equivalent.
		%\begin{itemize}
	%		\item[1)] $\sf{Span}_{\E}(\C)$ is a unitary tabular allegory.
	%		\item[2)] $\sf{Ret}(\C)\subseteq \E$.
	%	\end{itemize}
	%\end{corollary}

%Now an \emph{Important} question is:"Is $\sim_{\E_{\bullet}}$ is the least allegorical admissible compatible equivalence relation containing $\sim_\E$ such that $\sf{Span}_{\E_{\bullet}}(\C)$ is a unitary?"
%We end this section by remarking that when $\E$ is part of a stable factorization structure $(\E,\M)$, Corollaries \ref{alleg iff cotains ret} and \ref{nec suf con rel be tball} imply that
%$\sf{Span}_{\E}(\C)$ is a unitary tabular allegory as soon as it is an allegory. 

\section{Assigning to an allegory its regular category of maps is a right adjoint} % $\sf{StablFac}$ and $\sf{UnitTabAll}$}
We recall from \cite[A1 and A3]{js} that, for a unitary and tabular allegory, the category $\mathsf{Map}(\A)$ is a finitely complete and regular category, that is: it comes equipped with a stable (regular epi, mono)-factorization system. The regular epimorphisms (also known as {\em covers}) in $\mathsf{Map}(\A)$ are described as those maps $f$ in $\A$ with $ff^\circ =1$, and the monomorphisms as those satisfying $f^\circ f=1$. The unit of $\A$ serves as a terminal object in $\mathsf{Map}(\A)$.

In what follows we set up two (huge) 2-categories whose objects are respectively given by the unitary tabular allegories and the finitely complete categories equipped with a stable factorization system. They allow us to treat the formation of the map category of an allegory as a 2-functor
$$\mathsf{Map}:\mathfrak{UTabAll}\longrightarrow\mathfrak{StabFact},$$
and then to show that the assignment $(\C,\E,\M)\longmapsto \mathsf{Span}_{\E_{\bullet}}(\C)$ of Theorem 
\ref{tabulation} may be seen as the object part of its left adjoint, which we will call $\mathsf{All}$.

\medskip
In greater detail, in the 2-category $\mathfrak{UTabAll}$,
\begin{itemize}
	\item objects are unitary tabular allegories;
	\item morphisms $R:\A\longrightarrow\B$ are {\em unitary representations} (\cite[2.154]{fs}), {\em i.e.}, functors preserving units and commuting with the involutions and the meet operations;
	\item 2-cells $\phi:R\Longrightarrow S:\A\longrightarrow\B$ are natural transformations whose components are maps and whose naturality squares involving maps are pullbacks in $\mathsf{Map}(\B)$.
\end{itemize}
%Now we define the $2$-category $\sf{StablFac}$ whose objects are finitely  complete categories with a stable factorization structure,  whose morphisms are functors $F:(\C,\E,\M)\rightarrow (\C',\E',\M')$ preserving finite limits such that $F(\E)\subseteq\E'$ and $F(\M)\subseteq\M'$ and $2$-cells are natural transformations whose naturality squares are pullbacks.
In the 2-category $\mathfrak{StabFact}$,
\begin{itemize}
\item objects $(\C,\E,\M)$ are given by finitely complete categories $\C$ that come equipped with a pullback-stable factorization system $(\E,\M)$;
\item morphisms $F:(\C,\E,\M)\longrightarrow(\D,\F,\N)$ are finite-limit preserving functors $F:\C\to\D$ with $F(\E)\subseteq\F$ and $F(\M)\subseteq\N$;
\item 2-cells $\theta:F\Longrightarrow G$ are natural transformations whose naturality squares are pullbacks in $\D$.
\end{itemize}
Of course, in both 2-categories, the horizontal and vertical compositions proceed as in the (huge) 2-category $\mathfrak{Cat}$ of all categories.

Since the notions of map and tabulation in an allegory are definable equationally with the operations $\circ$ and $\wedge$, a unitary representation $R:\A\longrightarrow  \B$ preserves these notions. Consequently, $R$ restricts to a functor $\mathsf{Map}(\A)\longrightarrow \mathsf{Map}(\B)$ which preserves the terminal object and pullbacks (these being given respectively by the unit and by tabulation) and, hence, all finite limits. Likewise, the restriction of $R$ to the map categories preserves regular epimorphisms and monomorphisms, as these are again characterized equationally via $\circ$. Finally, our definitions of 2-cells in the two huge 2-categories under consideration guarantee that we indeed have the 2-functor
$$\mathsf{Map}:\mathfrak{UTabAll}\longrightarrow\mathfrak{StabFact},\;\A\longmapsto(\mathsf{Map}(\A),\mathrm{RegEpi}(\mathsf{Map}(\A)),\mathrm{Mono}(\mathsf{Map}(\A))),$$
which operates on morphisms and 2-cells by restriction to the categories of maps. Next we establish the aforementioned 2-functor $\mathsf{All}$, going in the opposite direction.

\begin{proposition}\label{All}
There is a 2-functor
$$\mathsf{All}:\mathfrak{StabFact}\longrightarrow\mathfrak{UTabAll},\;(\C,\E,\M)\longmapsto\mathsf{Span}_{\E_{\bullet}}(\C).$$	
\end{proposition}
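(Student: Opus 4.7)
The plan is to define $\mathsf{All}$ on morphisms and 2-cells and verify the 2-functor axioms; the object assignment already yields a unitary tabular allegory by Theorem \ref{tabulation}. On a morphism $F:(\C,\E,\M)\to(\D,\F,\N)$ we set $\mathsf{All}(F)A=FA$ and $\mathsf{All}(F)[f,g]_{\E_{\bullet}}=[Ff,Fg]_{\F_{\bullet}}$; on a 2-cell $\theta:F\Rightarrow G$ we set $\mathsf{All}(\theta)_A=[1_{FA},\theta_A]_{\F_{\bullet}}$.

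The critical well-definedness check for $\mathsf{All}(F)$ is the inclusion $F(\E_{\bullet})\subseteq\F_{\bullet}$. Since $\E_{\bullet}$ is the composition-closure of $\E\cup\M^{*}$ and $F$ preserves composition and satisfies $F(\E)\subseteq\F$, this reduces to $F(\M^{*})\subseteq\N^{*}$: because $F$ preserves finite limits, it sends each pullback cube defining a conjugate $m^{*}$ of some $m\in\M$ to the analogous cube exhibiting $F(m^{*})$ as a conjugate of $F(m)\in\N$, and $F$ respects the closure-under-pullbacks step used in building $\M^{*}$. The remaining properties of $\mathsf{All}(F)$ --- preservation of identities, of the pullback-based span composition, of the involution, of the pullback-defined meet, and of the terminal object (hence of the unit) --- all follow routinely from $F$ being finite-limit-preserving, so that $\mathsf{All}(F)$ is a unitary representation.

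For the 2-cell $\mathsf{All}(\theta)$, each component $[1_{FA},\theta_A]_{\F_{\bullet}}$ is a map by the lemma preceding Proposition \ref{pb iff tab}. Naturality on a morphism $[f,g]_{\E_{\bullet}}:A\to B$ represented by a span $(f:D\to A,g:D\to B)$ is a direct computation: the composite $\mathsf{All}(\theta)_B\circ\mathsf{All}(F)[f,g]_{\E_{\bullet}}$ is obtained via the trivial pullback of $Fg$ with $1_{FB}$ as $[Ff,\theta_B\circ Fg]_{\F_{\bullet}}$, while $\mathsf{All}(G)[f,g]_{\E_{\bullet}}\circ\mathsf{All}(\theta)_A$ is obtained via the pullback of $\theta_A$ with $Gf$ --- which by hypothesis on $\theta$ is realised by the naturality square $\theta_A\circ Ff=Gf\circ\theta_D$ --- as $[Ff,Gg\circ\theta_D]_{\F_{\bullet}}$, and the two agree on the nose via $\theta_B\circ Fg=Gg\circ\theta_D$. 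To verify the pullback condition at a map of the source allegory, one uses the tabulation of Theorem \ref{tabulation} to decompose the naturality square into pieces arising from naturality squares of $\theta$ at morphisms of $\C$, and invokes Propositions \ref{pb iff tab} and \ref{m tabulation} (together with the existence of tabulations in the target allegory) to see that the graph functor $\Gamma_{\sim_{\F_{\bullet}}}$ sends each such pullback square in $\D$ to a pullback in $\mathsf{Map}(\mathsf{Span}_{\F_{\bullet}}(\D))$.

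The 2-functoriality axioms --- preservation of identities and of horizontal and vertical composites --- are then routine. I expect the main obstacle to be the verification of $F(\M^{*})\subseteq\N^{*}$, since $\M^{*}$ is defined by an inductive construction (conjugation followed by pullback closure) that must be tracked carefully through $F$; a secondary subtlety is the reduction, via the tabulations of Theorem \ref{tabulation}, of the pullback condition at an arbitrary map of the source allegory to the pullback status of the naturality squares of $\theta$ in $\C$.
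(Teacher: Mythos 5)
Your proposal follows essentially the same route as the paper: identical definitions of $\mathsf{All}$ on objects, morphisms and 2-cells; the same reduction of well-definedness to $F(\M^{*})\subseteq\N^{*}$ via preservation of the conjugation cubes and of pullbacks; the same routine verification that $\overline{F}$ is a unitary representation; and the same naturality computation for $\overline{\theta}$ using the hypothesis that the naturality squares of $\theta$ are pullbacks.

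The one place where your sketch does not quite go through as stated is the pullback condition for the naturality square of $\overline{\theta}$ at a map $[f,g]_{\E_{\bullet}}$. You propose to decompose that square into pieces "arising from naturality squares of $\theta$" and then apply the pullback-preservation of the graph functor $\Gamma_{\sim_{\F_{\bullet}}}$ to each piece. The paper's decomposition is $[f,g]=[1,g]\circ[f,1]$, and only the \emph{right} half (built from $[1,Fg]$, $[1,Gg]$) is the $\Gamma_{\sim_{\F_{\bullet}}}$-image of a naturality square of $\theta$, so only that half is covered by Proposition \ref{m tabulation}. The \emph{left} half has cographs $[Ff,1]_{\F_{\bullet}}$, $[Gf,1]_{\F_{\bullet}}$ as its horizontal edges and is not in the image of the graph functor; since a map $[f,g]_{\E_{\bullet}}$ need not itself be a graph, this piece cannot be absorbed into the naturality-square mechanism. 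The paper closes this gap with a separate argument: because $[f,g]_{\E_{\bullet}}$ is a map, both $([1,1]_{\E_{\bullet}},[f,g]_{\E_{\bullet}})$ and $([1,f]_{\E_{\bullet}},[1,g]_{\E_{\bullet}})$ tabulate it, so by essential uniqueness of tabulations $[1,f]_{\E_{\bullet}}$ is an isomorphism with inverse $[f,1]_{\E_{\bullet}}$; a commutative square whose two horizontal edges are isomorphisms is automatically a pullback. You would need to supply this (or an equivalent) argument to complete the 2-cell verification.
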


\begin{proof}
By Theorem \ref{tabulation}, $\mathsf{Span}_{\E_{\bullet}}(\C)$ is a unitary tabular allegory. For a morphism $F:(\C,\E,\M)\longrightarrow(\D,\F,\N)$ in $\mathfrak{StabFact}$, we first note that the inclusion $F(\M)\subseteq\N$ and the preservation of pullbacks imply $F(\M^*)\subseteq\N^*$ which, in conjunction with $F(\E)\subseteq\F$, gives $F(\E_{\bullet})\subseteq\F_{\bullet}$. Invoking pullback preservation once more, we see that $F$ induces a well-defined functor
$$\mathsf{All}(F):=\overline{F}:	\mathsf{Span}_{\E_{\bullet}}(\C)\longrightarrow\mathsf{Span}_{\F_{\bullet}}(\D),\;([f,g]_{\E_{\bullet}}:A\to B)\longmapsto([Ff,Fg]_{\F_{\bullet}}:FA\to FB).$$
Further, as required, the functor $\overline{F}$ obviously commutes with the involution, and it preserves the $\wedge$-operation as well as units since $F$ preserves finite products. Hence, $\overline{F}$ is a morphism in $\mathfrak{UTabAll}$.

Let us now consider a 2-cell $\theta:F\Longrightarrow G:(\C,\E,\M)\longrightarrow(\D,\F,\N)$ and prove that
$$\mathsf{All}(\theta):=\overline{\theta}:\overline{F}\Longrightarrow\overline{G}\quad\text{with}\quad\overline{\theta}_A:=[1,\theta_A]_{\F_{\bullet}}:{F}A\longrightarrow{G}A$$
defines a 2-cell in $\mathfrak{UTabAll}$. Clearly, the components of $\overline{\theta}$ are maps  in $\mathsf{Span}_{\F_{\bullet}}(\D)$. Next we observe that, since the naturality squares of $\theta$ are pullbacks in $\D$, for all spans $\xymatrix{A& D\ar[l]_f\ar[r]^g & B
}$ in $\C$ we have that the left square of the diagram
\begin{center}
$\xymatrix{FA\ar[rr]^{(Ff,Fg)}\ar[d]_{(1,\theta_A)} && FB\ar[d]^{(1,\theta_B)}\\
GA\ar[rr]^{(Gf,Gg)}&&GB}$\hfil
$\xymatrix	{FA\ar[rr]^{\overline{F}([f,g]_{\E_{\bullet}})}\ar[d]_{\overline{\theta}_A} && FB\ar[d]^{\overline{\theta}_B}\\
GA\ar[rr]^{\overline{G}([f,g]_{\E_{\bullet}})}&&GB}$
\end{center}
commutes in $\mathsf{Span}(\D)$, whence the right square commutes in $\mathsf{Span}_{\F_{\bullet}}(\D)$. This shows the naturality of $\overline{\theta}$, and we are left with having to prove that the right diagram is a pullback in $\mathsf{Map}(\mathsf{Span}_{\F_{\bullet}}(\D))$ whenever $[f,g]_{\E_{\bullet}}$ is a map in $\mathsf{Span}_{\E_{\bullet}}(\C)$. To this end, using again the fact that the naturality squares of $\theta$ are pullbacks, we note that the left square above decomposes as shown on the left of the following diagram, whence the right square above decomposes as shown on the right below:
\begin{center}
$\xymatrix{FA\ar[rr]^{(Ff,1)}\ar[d]_{(1,\theta_A)} && FD\ar[rr]^{(1,Fg)}\ar[d]^{(1,\theta_D)} && FB\ar[d]^{(1,\theta_B)}\\
GA\ar[rr]^{(Gf,1)} && GD\ar[rr]^{(1,Gg)} && GB
}$\hfil
$\xymatrix{FA\ar[rr]^{[Ff,1]_{\F_\bullet}}\ar[d]_{[1,\theta_A]_{\F_{\bullet}}} && FD\ar[rr]^{[1,Fg]_{\F_{\bullet}}}\ar[d]^{[1,\theta_D]_{\F_{\bullet}}} && FB\ar[d]^{[1,\theta_D]_{\F_{\bullet}}}\\
GA\ar[rr]^{[Gf,1]_{\F_{\bullet}}} && GD\ar[rr]^{[1,Gg]_{\F_{\bullet}}}&& GB}$
\end{center}
The right part of the double square on the right is a pullback, by Proposition \ref{m tabulation}, since it is the image of a naturality square of $\theta$ under the functor $\Gamma_{\sim_{\F_{\bullet}}}:\D\longrightarrow\mathsf{Map}(\mathsf{Span}_{\F_{\bullet}}(\D))$. Hence , it suffices to show that also its left part is a pullback, and for that it is sufficient to show that $[Ff,1]_{\F_{\bullet}}$ and $[Gf,1]_{\F_{\bullet}}$ are isomorphisms in $\mathsf{Map}(\mathsf{Span}_{\F_{\bullet}}(\D))$. In fact, by functoriality of $\overline{F}$ and $\overline{G}$, it suffices to show that $[f,1]_{\E_{\bullet}}$ is an isomorphism in $\mathsf{Map}(\mathsf{Span}_{\E_{\bullet}}(\C))$. But this last fact one sees easily: as $[f,g]_{\E_{\bullet}}$ is assumed to be a map, the pairs $([1,1]_{\E_{\bullet}},[f,g]_{\E_{\bullet}})$ and $([1,f]_{\E_{\bullet}},[1,g]_{\E_{\bullet}})$ both tabulate $[f,g]_{\E_{\bullet}}$, so the two tabulations can differ only by an isomorphism, which makes $[1,f]_{\E_{\bullet}}$ an isomorphism since $[1,1]_{\E_{\bullet}}$ is an isomorphism, and its inverse must be $[1,f]_{\E_{\bullet}}^{\circ}=[f,1]_{\E_{\bullet}}$ (see \cite[2.35]{fs}).
\end{proof}

We are now ready to put the assertion of Theorem \ref{tabulation} into a "global perspective":
 \begin{theorem}\label{globaltheorem}
 	There is a $2$-adjunction
 	$$\xymatrix{\mathsf{All}\dashv \mathsf{Map}:\mathfrak{UTabAll}\ar[rr]&& \mathfrak{StabFact}}$$
 	whose counit is an isomorphism. Hence, up to 2-equivalence, $\mathfrak{UTabAll}$ may be considered as a 2-reflective subcategory of $\mathfrak{StabFact}$.
 \end{theorem}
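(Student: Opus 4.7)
My plan is to construct the unit $\eta$ and counit $\epsilon$ explicitly, invoke the Freyd--Scedrov Representation Theorem \cite[2.154]{fs} to deduce that $\epsilon$ is an isomorphism, and then verify the two triangle identities. The 2-functoriality of $\mathsf{All}$ is already established in Proposition \ref{All}, and that of $\mathsf{Map}$ is recorded just before it, so the remaining work is the strict 2-naturality of $\eta$ and $\epsilon$.

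For the counit, fix a unitary tabular allegory $\A$. Since $\mathsf{Map}(\A)$ is regular, $\M:=\mathrm{Mono}(\mathsf{Map}(\A))$ is a class of monomorphisms and so $\E_{\bullet}=\E:=\mathrm{RegEpi}(\mathsf{Map}(\A))$; by Theorem \ref{RelasquotientofSpan} we therefore have $\mathsf{All}(\mathsf{Map}(\A))\cong\mathsf{Rel}(\mathsf{Map}(\A))$. I define
$$\epsilon_\A[f,g]_\E := g\cdot f^{\circ},$$
whose well-definedness on $\sim_\E$-classes follows from the identity $ee^{\circ}=1$ for covers $e$ in $\mathsf{Map}(\A)$. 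This is precisely the inverse of the representation functor $\A\to\mathsf{Rel}(\mathsf{Map}(\A))$ sending $r$ to its tabulation, which by \cite[2.154]{fs} is an isomorphism of allegories. For the unit, given $(\C,\E,\M)$ in $\mathfrak{StabFact}$, take the graph functor
$$\eta_{(\C,\E,\M)}:=\Gamma_{\sim_{\E_\bullet}}:\;\C\longrightarrow\mathsf{Map}(\mathsf{Span}_{\E_\bullet}(\C)),\quad f\longmapsto[1,f]_{\E_\bullet},$$
already known to land in $\mathsf{Map}$ by the first lemma of Section 5. To show $\eta_{(\C,\E,\M)}$ is a morphism in $\mathfrak{StabFact}$, I verify: terminal objects are preserved (becoming the unit of the allegory, by Theorem \ref{tabulation}); pullbacks are preserved (by Proposition \ref{m tabulation}, whose hypothesis is satisfied in view of Theorem \ref{tabulation}); $\Gamma(\E)\subseteq\mathrm{RegEpi}$ via the computation $\Gamma e\cdot(\Gamma e)^{\circ}=[e,e]_{\E_\bullet}=[1,1]_{\E_\bullet}$, valid since $e\in\E_\bullet$; and $\Gamma(\M)\subseteq\mathrm{Mono}$ via $(\Gamma m)^{\circ}\cdot\Gamma m=[m_1,m_2]_{\E_\bullet}=[1,1]_{\E_\bullet}$, where $(m_1,m_2)$ is the kernel pair of $m$ whose common section lies in $\M^{*}\subseteq\E_\bullet$ by Lemma \ref{sec in M*}.

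The triangle identities then reduce to direct tracing. A map $f$ in $\mathsf{Map}(\A)$ is sent by $\mathsf{Map}(\epsilon_\A)\circ\eta_{\mathsf{Map}(\A)}$ to $f\cdot 1^{\circ}=f$, and $[f,g]_{\E_\bullet}$ is sent by $\epsilon_{\mathsf{All}(\C,\E,\M)}\circ\mathsf{All}(\eta_{(\C,\E,\M)})$ to $\Gamma g\cdot(\Gamma f)^{\circ}=[f,g]_{\E_\bullet}$ via the decomposition noted at the end of Definition \ref{defallegoricalrelation}. Strict 2-naturality of $\eta$ in 1-cells follows from the pointwise formula together with the inclusion $F(\E_\bullet)\subseteq\F_\bullet$ established in Proposition \ref{All}, and compatibility with 2-cells relies on Proposition \ref{m tabulation} once more, to see that pullback naturality squares in $\D$ are transported by $\Gamma_{\sim_{\F_\bullet}}$ to pullback naturality squares of maps in $\mathsf{Span}_{\F_\bullet}(\D)$; 2-naturality of $\epsilon$ is immediate from functoriality of the tabulation construction. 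The hardest part I anticipate is this 2-categorical bookkeeping, though the key calculation---that $\Gamma$-images of pullbacks are pullbacks of maps---has essentially been performed in the proof of Proposition \ref{All}. With the adjunction in place, the isomorphism property of $\epsilon$ identifies $\mathfrak{UTabAll}$, up to 2-equivalence, with the image of $\mathsf{Map}$, which is therefore a 2-reflective 2-subcategory of $\mathfrak{StabFact}$.
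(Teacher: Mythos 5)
Your proposal is correct and follows essentially the same route as the paper: the unit is the graph functor $\Gamma_{\sim_{\E_\bullet}}$ with the same verifications (pullback/terminal preservation via Proposition \ref{m tabulation} and Theorem \ref{tabulation}, $\E\mapsto$ covers via $(e,e)\leq_{\E}(1,1)$, $\M\mapsto$ monos via the kernel-pair section in $\M^*$), the counit is the Freyd--Scedrov tabulation isomorphism transported along Theorem \ref{RelasquotientofSpan}, and the triangle identities are traced identically. The only cosmetic difference is that you define $\epsilon_\A$ directly by $[f,g]\mapsto g\cdot f^{\circ}$ and justify well-definedness by $ee^{\circ}=1$, whereas the paper composes the two established isomorphisms; these agree since $g\cdot f^{\circ}=\pi_2 m\cdot(\pi_1 m)^{\circ}$ for $\langle f,g\rangle=me$.
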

 \begin{proof}
 For an object $(\C,\E,\M)$ in $\mathfrak{StabFact}$, we define the $(\C,\E,\M)$-component of the desired 2-natural transformation $\Gamma:\mathrm{Id}\Longrightarrow \mathsf{Map}\mathsf{All} $ to be the (identity-on-objects) functor
 $$\Gamma_{\sim_{\E_{\bullet}}}:\C\longrightarrow \mathsf{Map}(\mathsf{Span}_{\E_{\bullet}}(\C)),\quad f\longmapsto[1,f]_{\E_{\bullet}},$$
 which, by Proposition \ref{m tabulation}, preserves pullbacks, as well as the terminal object (see Theorem \ref{tabulation}). Item (i) of Proposition \ref{m tabulation} also confirms that, for $m\in \M$, one has $[1,m]^{\circ}_{\E_{\bullet}}\circ[1,m]_{\E_{\bullet}}=1$, which makes $\Gamma_{\sim_{\E_{\bullet}}}m$ a monomorphism. For $e\in\E$, since $(e,e)\leq_{\E}(1,1)$, one trivially has $[1,e]\circ[1,e]^{\circ}_{\E_{\bullet}}=1$ and, therefore, that $\Gamma_{\sim_{\E_{\bullet}}} e$ is a regular epimorphism in $\mathsf{Map}(\mathsf{Span}_{\E_{\bullet}}(\C))$. This establishes  $\Gamma_{\sim_{\E_{\bullet}}}$ as a morphism in $\mathfrak{StabFact}$. 
  
  % natural of the desired adjunction
%We show $\xymatrix{\eta:id_{\sf{StablFac}}\ar[r]&Map All:\sf{StablFac}\ar[r]&\sf{StablFac}}$ is the unit of the $2$-adjunction, where for every $(\C,\mathcal E,\mathcal M)$, $\eta_{\C}:(\C,\mathcal E,\mathcal M)\longrightarrow Map(\sf{Span}_{\E_{\bullet}}(\C))$ takes $f$ to $[1,f]_{\E_{\bullet}}$. Using Proposition \ref{pullback tabulation} and Theorem \ref{tabulation}, one can verify that $\eta_{\C}$ is a morphism in $\sf{StablFac}$. 
In order to show the 2-naturality of $\Gamma$, one must confirm that, for all morphisms $F,\,
G:(\C,\E,\M)\longrightarrow(\D,\F,\N)$ and every 2-cell	$\theta:F\Longrightarrow G$, the diagram % is a $2$-natural transformation, we show that the following diagram is commutative.
 	\begin{center}
 	$\xymatrix{(\C,\E,\M)\ar@{->}[rrr]^{\Gamma_{\sim_{\E_{\bullet}}}}\ar@<-1.5ex>[dd]_{F}^{\hspace{1mm} \Longrightarrow^{\hspace{-3mm}{^ \theta}}}\ar@<3ex>[dd]^{G} &&&
 	\mathsf{Map}(\sf{Span}_{\E_{\bullet}}(\C))\ar@<-4ex>[dd]^{\hspace{4mm} \Longrightarrow^{\hspace{-6mm} ^{\ \ \overline{\theta}}}}_{\overline{F}}\ar@<4ex>[dd]^{\overline{G}}\\
 		&&&\\
 		(\D,\F,\N)\ar@{->}[rrr]_{\Gamma_{\sim_{\F_{\bullet}}}}&&& \mathsf{Map} (\sf{Span}_{\E'_{\bullet}}(\D))}$
 \end{center}
 commutes in an obvious sense (where we have omitted listing the regular factorization systems of the RHS categories). But this confirmation involves only short calculations referring exclusively to the applicable definitions, which we can omit here. 
  	
 	%For every morphism $\xymatrix{f:A\ar[r]&B}$ in $\C$, we have:
 %	$$((Map All\, F) \circ\eta_{\C})(f)=Map (All \, F([1, f]_{\E_{\bullet}}))=All \, F([1, f]_{\E_{\bullet}})=[1, Ff]_{\E'_{\bullet}}=\eta_{\C'}F(f) $$
 	% Also, 
 %	 $$((Map All\, G) \circ\eta_{\C})(f)=Map (All \, G([1, f]_{\E_{\bullet}}))=All \, G([1, f]_{\E_{\bullet}})=[1, Gf]_{\E'_{\bullet}}=\eta_{\C'}G(f) $$
 	 
%To show $1_{\eta_{\C'}}\star\theta=(Map All\, \theta)\star1_{\eta_{\C}}$, for every object $A\in \C$, we have:
% 	 \begin{equation*}
 %	 \begin{split}
 %	(1_{\eta_{\C'}}\star\theta)_A &=1_{\eta_{\C'}}(GA)\circ (\eta_{\C'}\theta_A)=\eta_{\C'}(1_{GA})\circ [1_{FA}, \theta_A]_{\E'_{\bullet}}\\ &=[1_{GA}, 1_{GA}]_{\E'_{\bullet}}\circ [1_{FA}, \theta_A]_{\E'_{\bullet}}=[1_{FA}, \theta_A]_{\E'_{\bullet}}
 %	 \end{split}
 %	 \end{equation*}
 	 
 	% also, 
 	% \begin{equation*}
 %	 \begin{split}
 %	 ((Map All\, \theta)\star 1_{\eta_{\C}})_A &= Map All\, \theta \eta_{\C}(A)\circ Map All\, F1_{\eta_{\C}}(A)\\ &=[1_{FA}, \theta_A]_{\E'_{\bullet}}\circ [1_{FA}, 1_{FA}]_{\E'_{\bullet}}=[1_{FA}, \theta_A]_{\E'_{\bullet}}
 %	  \end{split}
 %	 \end{equation*}
 %	 as desired. Thus $\eta$ is a $2$-natural transformation.
 	 
 We now proceed to establish the prospective counit $\Delta:\mathsf{AllMap}\Longrightarrow\mathrm {Id}$ of the desired 2-adjunction. For that, we use the {\em Freyd-Scedrov Representation Theorem} for (unitary) tabular allegories \cite[2.148]{fs}: for every (unitary) tabular allegory, one has the (natural) tabulation isomorphism  
 $$\mathsf{Rel}(\mathsf{Map}(\A))\longrightarrow\A,\quad(h,k)\longmapsto k\cdot h^{\circ},$$
 which we may precompose with the isomorphism (in $\mathfrak{Cat}$; see Theorem \ref{RelasquotientofSpan})
 $$\mathsf{Span}_\R(\mathsf{Map}(\A))\longrightarrow\mathsf{Rel}(\mathsf{Map}(\A)),\quad[f,g]_\E\longmapsto(\pi_1m,\pi_2m),$$ 
 where $\R=\R_{\bullet}$ is the class $\mathrm{RegEpi}(\mathsf{Map}(\A))$ and $m$ is the regular image of $\langle f,g\rangle:\cdot\to  A\times B$.
 The composite of these two isomorphisms defines the $\A$-component of $\Delta$. It clearly commutes with the involutions and meet-operations of the participating allegories and is therefore an isomorphism in $\mathfrak{UTabAll}$. We can omit the proof that $\Delta$ is indeed a 2-natural transformation, which proceeds as routinely as the corresponding proof for $\Gamma$.

It remains for us to establish the triangular identities
 	   %Finally, we have to show the following triangular identities are satisfied: 	   
 	   $$\mathsf{Map}\Delta\circ \Gamma\mathsf{Map}=1_{\mathsf{Map}} \quad\text{and}\quad \Delta \mathsf{All}\circ \mathsf{All}\,\Gamma=1_{\mathsf{All}}.$$
 	   For the first identity, we need to show that, for a unitary tabular allegory $\A$ and $\R=\mathrm{RegEpi}(\mathsf{Map}(\A)) $, the composite functor
 	   \begin{center}
 	   $\xymatrix{\mathsf{Map}(\A)\ar[rr]^{\Gamma_{\sim_\R}\qquad} && \mathsf{Map}(\mathsf{Span}_{\R }(\mathsf{Map}(\A)))\ar[rr]^{\qquad\mathsf{Map}(\Delta_\A)} && \mathsf{Map}(\A)
 	   }$
 %	   	$\xymatrix{Map(\A)\ar@{=}[rrdd]\ar[rr]^{\hs{-10}\eta_{Map\A}}&&Map(\sf{Span}_{RegEpi}(Map \A))\ar[dd]^{Map \epsilon_\A}\\&&\\&&Map\A}$
 	   \end{center}
 	   is the identity functor. Indeed, the first functor sends a map $f$ in $\A$ to  $[1,f]_\R $ which, as a map, tabulates itself trivially, and (the map-restriction of) $\Delta_\A$, as a morphism of allegories, preserves the trivial tabulation, so that one gets $f\cdot 1^{\circ}=f$.
 	   
 	 % Note that for every map $f:A\longrightarrow B$ in $Map(\A)$, we have:
 	%   $$Map(\epsilon_\A)\circ \eta_{Map\A}(f)=Map(\epsilon_\A)[1_{FA}, f]_{RegEpi}=\epsilon_\A[1_{FA}, f]_{RegEpi}=f1_{FA}^{\circ}=f=id_{Map \A}(f).$$
 	   
 	   For the second identity, we consider $(\C, \E, \M)$ in $\mathfrak{StabFact}$ and show that the composite functor
 	    \begin{center}
 	    $\xymatrix{\mathsf{Span}_{\E_{\bullet}}(\C)\ar[rr]^{\mathsf{All}(\Gamma_{\E_{\bullet}})\qquad} && \mathsf{Span}_\R(\mathsf{Map}(\mathsf{Span}_{\E_{\bullet}}(\C)))\ar[rr]^{\qquad\Delta_{\mathsf{Span}_{\E_{\bullet}}(\C)}} && \mathsf{Span}_{\E_{\bullet}}(\C)  	    }$
 	    \end{center}
 	%    	$\xymatrix{\sf{Span}_{\E_{\bullet}}(\C)\ar@{=}[rrdd]\ar[rr]^{\hs{-10}All\eta_{\C}}&&\sf{Span}_{RegEpi}(Map( \sf{Span}_{\E_{\bullet}}(\C)))\ar[dd]^{\epsilon_{\sf{Span}_{\E_{\bullet}}(\C)}}\\&&\\&&\sf{Span}_{\E_{\bullet}}(\C)}$
 	%    \end{center}
 	    maps every morphism 
 	   $[f, g]_{\E_{\bullet}}$ in $\mathsf{Span}_{\E_{\bullet}}(\C)$ identically. Indeed, the first functor maps $[f,g]_{\E_{\bullet}}$ to the morphism $[\Gamma_{\E_{\bullet}}(f),\Gamma_{\E_{\bullet}}(g)]_\R$, which then the second functor tabulates as
 	   $$\Gamma_{\E_{\bullet}}(g)\circ\Gamma _{\E_{\bullet}}(f)^{\circ}=[1,g]_{\E_{\bullet}}\circ[f,1]_{\E_{\bullet}}=[f,g]_{\E_{\bullet}},$$  
 	   as desired.  	   
 	 %  with $m$ as an $\M$-part of  $\langle f, g\rangle$, $$\langle [1_D,f]_{\E_{\bullet}},[1_D, g]_{\E_{\bullet}}\rangle=[1_D,\langle f, g\rangle ]_{\E_{\bullet}}$$ and $[1,m]_{\E_{\bullet}}$ is an $Mono$-part of $[1_D,\langle f, g\rangle ]_{\E_{\bullet}}$. So we have:
 %	    \begin{equation*}
 %	    \begin{split}
 %	   (\epsilon All(\C, \E, \M)\circ All\eta_\C)[f, g]_{\E_{\bullet}}&=\epsilon _{ \sf{Span}_{\E_{\bullet}}(\C)}[\eta_{\C}(f), \eta_{\C}(g)]_{RegEpi}\\&=\epsilon _{ \sf{Span}_{\E_{\bullet}}(\C)}[[1,f]_{\E_{\bullet}}, [1,g]_{\E_{\bullet}}]_{RegEpi}\\ &=
 %	   ([1, \pi_B]_{\E_{\bullet}}\circ [1, m]_{\E_{\bullet}})\circ  ([1, \pi_A]_{\E_{\bullet}}\circ [1, m]_{\E_{\bullet}})^{\circ}\\&= ([1, \pi_Bm]_{\E_{\bullet}})\circ ([1, \pi_Am]_{\E_{\bullet}})^{\circ}=[f, g]_{\E_{\bullet}}
 %	    \end{split}
 %	    \end{equation*}	   
 \end{proof}
 
 As an important consequence of the Theorem, we obtain that the image under the right-adjoint 2-functor $\mathsf{Map}$ is 2-equivalent to the 2-category $\mathfrak{UTabAll}$ and 2-reflective in $\mathfrak{StabFact}$. Furthermore, that image is precisely the (huge) 2-category
 $$\mathfrak{RegCat}$$
 of regular finitely complete categories, defined as a full sub-2-category of $\mathfrak{StabFact}$; its equivalence with $\mathfrak{UTabAll}$ was formulated explicitly (in the small one-dimensional version) in \cite[2.154]{fs} and shown comprehensively in \cite[Theorem 3.2.10]{js}.
\begin{corollary}
	The 2-category $\mathfrak{UTabAll}$ is 2-equivalent to $\mathfrak{RegCat}$, which is a 2-reflective full sub-2-category of $\mathfrak{StabFact}$.
\end{corollary}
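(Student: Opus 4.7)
The plan is to read this corollary as an essentially formal consequence of Theorem \ref{globaltheorem} together with the Freyd-Scedrov Representation Theorem. Since the counit $\Delta\colon \mathsf{All}\,\mathsf{Map}\Longrightarrow \mathrm{Id}$ of the 2-adjunction is a 2-natural isomorphism, the right 2-adjoint $\mathsf{Map}$ is 2-fully-faithful. Hence $\mathsf{Map}$ restricts to a 2-equivalence between $\mathfrak{UTabAll}$ and the full sub-2-category of $\mathfrak{StabFact}$ given by its essential image.

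Next I would identify this essential image with $\mathfrak{RegCat}$. That the image is contained in $\mathfrak{RegCat}$ is exactly the content of the paragraph opening Section 6: for every unitary tabular allegory $\A$, the category $\mathsf{Map}(\A)$ is finitely complete and regular, and carries the stable (regular epi, mono)-factorization system described there. For the reverse inclusion I invoke the Freyd--Scedrov Representation Theorem \cite[2.154]{fs}: every regular category $\C$ can be recovered, up to canonical isomorphism in $\mathfrak{StabFact}$, as $\mathsf{Map}(\mathsf{Rel}(\C))$, with $\mathsf{Rel}(\C)$ unitary tabular (this recovery is precisely what the composite of tabulation isomorphisms used to build $\Delta_{\mathsf{Rel}(\C)}$ in Theorem \ref{globaltheorem} confirms). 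Thus every object of $\mathfrak{RegCat}$ is of the form $\mathsf{Map}(\A)$ up to isomorphism, so the essential image of $\mathsf{Map}$ is precisely $\mathfrak{RegCat}$. This proves the first assertion.

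For the 2-reflectivity I would factor $\mathsf{Map}$ as the 2-equivalence $E\colon \mathfrak{UTabAll}\xrightarrow{\ \simeq\ }\mathfrak{RegCat}$ just obtained, followed by the full sub-2-category inclusion $I\colon\mathfrak{RegCat}\hookrightarrow\mathfrak{StabFact}$. Choosing a pseudo-inverse $E^{-1}$ to $E$, the composite $E\circ\mathsf{All}\colon \mathfrak{StabFact}\to\mathfrak{RegCat}$ is then a left 2-adjoint to $I$: its adjunction data is obtained from the 2-adjunction of Theorem \ref{globaltheorem} by pasting with the 2-equivalence $E\dashv E^{-1}$. The resulting counit is still an isomorphism (being a horizontal composite of isomorphisms), which yields the claimed 2-reflection.

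The main potential obstacle is bookkeeping: one must be sure that the Freyd--Scedrov isomorphism $\C\cong\mathsf{Map}(\mathsf{Rel}(\C))$ is an isomorphism in $\mathfrak{StabFact}$, not merely in $\mathfrak{Cat}$, i.e.\ that it matches the regular epi/mono factorization of $\C$ with the factorization that $\mathsf{Map}$ imposes on the right-hand side. This is, however, immediate from the characterizations of covers and monomorphisms in map categories recalled at the beginning of Section 6 together with the explicit tabulation formula used in the proof of Theorem \ref{globaltheorem}; no further computation is required.
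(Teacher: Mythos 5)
Your proposal is correct and follows essentially the same route as the paper: the paper likewise deduces the corollary from Theorem \ref{globaltheorem} by noting that the isomorphic counit makes $\mathsf{Map}$ (2-)fully faithful, hence an equivalence onto its image, and identifies that image with $\mathfrak{RegCat}$ via the regularity of $\mathsf{Map}(\A)$ on one hand and the Freyd--Scedrov/Johnstone representation on the other. Your added care about ``essential image'' and about the representation isomorphism living in $\mathfrak{StabFact}$ rather than merely in $\mathfrak{Cat}$ only makes explicit what the paper leaves tacit.
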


Without reference to allegories, we can formulate explicitly the one-dimensional part of the universal property describing the reflectivity statement, as follows:

\begin{corollary}
	For every finitely complete category $\C$ with a (not necessarily proper) stable $(\E,\M)$-factorization system, there is a finite-limit-preserving functor $\Gamma:\C\to\D$ to a regular category $\D$ which maps morphisms in $\E$ to regular epimorphisms and morphisms in $\M$ to monomorphisms in $\D$, such that any other functor $\C\to\D'$ of this type factors through $\Gamma$, by a uniquely determined functor $\D\to\D'$ that preserves finite limits and regular epimorphisms. Moreover, the category $\D$ may be constructed to have the same objects as $\C$, mapped identically by $\Gamma$.
\end{corollary}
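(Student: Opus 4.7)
The proof is essentially the one-dimensional shadow of the $2$-adjunction $\mathsf{All}\dashv\mathsf{Map}$ established in Theorem \ref{globaltheorem}. I would take
$$\D:=\mathsf{Map}(\mathsf{Span}_{\E_{\bullet}}(\C)),$$
endowed with its canonical $(\mathrm{RegEpi}(\D),\mathrm{Mono}(\D))$-factorization system, and let $\Gamma:=\Gamma_{\sim_{\E_{\bullet}}}:\C\to\D$ be the unit component of that adjunction at $(\C,\E,\M)$. The category $\D$ is a finitely complete regular category, since $\mathsf{Span}_{\E_{\bullet}}(\C)$ is a unitary tabular allegory by Theorem \ref{tabulation}, and $\Gamma$ is identity on objects by construction. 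The existence claims---preservation of finite limits, the mapping of $\E$ into regular epimorphisms, and the mapping of $\M$ into monomorphisms---were all verified in the proof of Theorem \ref{globaltheorem}, using Proposition \ref{m tabulation} for pullback preservation and the intrinsic characterizations of regular epimorphisms and monomorphisms inside a map category.

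For the universal property, I would begin with a finite-limit preserving functor $H:\C\to\D'$ with $\D'$ regular, $H(\E)\subseteq\mathrm{RegEpi}(\D')$, and $H(\M)\subseteq\mathrm{Mono}(\D')$. Equipping $\D'$ with its intrinsic factorization system $(\mathrm{RegEpi}(\D'),\mathrm{Mono}(\D'))$ turns $H$ into a morphism $(\C,\E,\M)\to(\D',\mathrm{RegEpi}(\D'),\mathrm{Mono}(\D'))$ in $\mathfrak{StabFact}$, so $\mathsf{All}(H)$ is a unitary representation of unitary tabular allegories. Since by Theorem \ref{globaltheorem} the counit component $\Delta_{\mathsf{All}(\D',\,\ldots)}$ is an isomorphism, the composite
$$\widehat{H}\;:=\;\mathsf{Map}(\Delta_{\mathsf{All}(\D',\,\ldots)})\circ\mathsf{Map}(\mathsf{All}(H)):\D\longrightarrow\D'$$
is a morphism in $\mathfrak{StabFact}$ between regular categories; it therefore preserves finite limits and regular epimorphisms, and the first triangular identity of the adjunction together with the $2$-naturality of $\Gamma$ yields $\widehat{H}\circ\Gamma=H$.

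Uniqueness (up to canonical natural isomorphism) is the one remaining delicacy. Any finite-limit preserving functor $K:\D\to\D'$ automatically preserves monomorphisms, so if it additionally preserves regular epimorphisms it lies in $\mathfrak{StabFact}$; should it further satisfy $K\circ\Gamma=H$, transposing across $\mathsf{All}\dashv\mathsf{Map}$ identifies $K$ with $\widehat{H}$ up to a unique iso, by the universal property of the unit. I do not expect a substantial obstacle here: the essential content has been packaged into Theorem \ref{globaltheorem}, and what remains is the (tidy but careful) translation of the formal $2$-reflection into the concrete factorization asserted, together with the observation that every regular category lives canonically in $\mathfrak{StabFact}$ via its intrinsic (regular epi, mono)-factorization system.
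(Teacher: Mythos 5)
Your proposal is correct and is precisely the route the paper intends: the corollary is stated without a separate proof as the one-dimensional shadow of the 2-adjunction $\mathsf{All}\dashv\mathsf{Map}$ of Theorem \ref{globaltheorem} with invertible counit, taking $\D=\mathsf{Map}(\mathsf{Span}_{\E_{\bullet}}(\C))$ and $\Gamma=\Gamma_{\sim_{\E_{\bullet}}}$ the unit, and viewing $\D'$ inside $\mathfrak{StabFact}$ via its (regular epi, mono) system. The only point to tighten is your hedge on uniqueness: since the counit $\Delta$ is an isomorphism, $\mathsf{Map}$ is fully faithful on hom-categories, so composition with $\Gamma$ is a bijection onto the relevant hom-set and the factorizing functor is strictly unique, as the statement asserts.
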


We end this article by presenting an easy example of a category $\C$ with a stable factorization system $(\E,\M)$ for which $\E\nsubseteq \mathrm{Epi}(\C)$ and $\M\nsubseteq \mathrm{Mono}(\C)$, such that the allegory $\A= \mathsf{All}(\C,\E,\M)$ admits a non-trivial representation $\A\longrightarrow\mathsf{Rel}$ to the allegory $\mathbf{Rel}=\mathsf{Rel}(\mathbf{Set})$ of sets and relations. 

\begin{example}
In the (ordinary) category $\C=\mathbf{Cat}$ of small categories we have the stable factorization system $(\E,\M) 
$ where  $\E$ stands for ``surjective on objects'' and $\M$ for ``fully faithful and injective on objects''.	Being right adjoint, the set-of-objects functor $\mathrm{ob}: \mathbf{Cat}\to\mathbf{Set}$ to the regular category $\mathbf{Set}$ preserves all limits, and it maps the classes $\E$ and $\M$ appropriately to make it a morphism in $\mathfrak{StabFact}$. Since $\mathbf{Set}\cong \mathsf{Map}(\mathbf{Rel})$, by the adjunction of Theorem \ref{globaltheorem} the functor $\mathrm{ob}$ corresponds to a representation $\A\to\mathbf{Rel}$, where the objects of the allegory $\A=\mathsf{All}(\mathbf{Cat},\E,\M)$ are those of $\mathbf{Cat}$. This representation maps objects like the functor $\mathrm{ob}$ does, and a morphism $[f,g]_{\E_{\bullet}}:A\to B$ in $\A=\mathsf{Span}_{\E_{\bullet}}(\mathbf{Cat})$ gets mapped to the relation given by the image of the function $\langle \mathrm{ob}\,f,\mathrm{ob}\,g\rangle$ in $\mathrm{ob}A\times\mathrm{ob}B$.

Consequently, the morphisms of $\A$ offer a new type of categorification of the notion of relation between sets.
\end{example}

 \refs
\bibitem[Adamek, Herrlich, Strecker 2004]{ada-her-str} J. Ad\'amek, H. Herrlich, G.E. Strecker. {\em Abstract and Concrete Categories}, John Wiley \& Sons, New York 1990. Online edition: 2004.
\bibitem[B\'enabou 1967]{Benabou} J. B\'enabou. {\em Introduction to Bicategories},  Lecture Notes in Mathematics 40, Springer, Berlin, 1967.
\bibitem[Borceux 1994]{Borceux1} F. Borceux. {\em Handbook of Categorical Algebra 1. Basic Category Theory}, Cambridge University Pres, 1994.
%\bibitem[Butz 1998]{bu} C. Butz. {\em Regular Categories and Regular Logic}, BRICS, Department of Computer Science, University of Aarhus, Aarhus 1998.
\bibitem[Freyd, Scedrov 1990]{fs} P. J. Freyd and A. Scedrov. {\em Categories, Allegories}, North Holland, Amsterdam 1990.
\bibitem[Hosseini, Shir Ali Nasab, Tholen 2020]{HST} S.N. Hosseini, A.R. Shir Ali Nasab, W. Tholen. Fraction, restriction and range categories from stable classes of morphisms, {\em Journal of Pure and Applied Algebra} 224(9):106361, 2020.
\bibitem[Jayewardene, Wyler 1996]{JW 1996} R. Jayewardene and O. Wyler. Categories of relations and functional relations, {\em Applied Categorical Structures} 8:279--305, 1996.
\bibitem[Johnstone 2002]{js} P. Johnstone. {\em Sketches of an Elephant: A Topos Theory Compendium}, vol. 1, Oxford University Press, Oxford 2002.
\bibitem[Klein 1970]{Klein} A. Klein. Relations in categories, {\em Illinois Journal of Mathematics}, 14:536--550, 1970.
\bibitem[Meisen 1974]{Meisen} J. Meisen. On bicategories of relations and pullback spans, {\em Communications in Algebra}, 1(5):377--401, 1974.
\bibitem[Milius 2000]{Milius} S. Milius. {\em Relations in Categories}, Master's thesis, York University, Toronto, Ontario, June 2000.
\bibitem[Pavlovi\'c 1995]{Pav 1}D. Pavlovi\'c. Maps I: Relative to a factorization system, {\em Journal of Pure and Applied Algebra} 99:9--34, 1995.
\end{document}